\theoremstyle{plain} 
\newtheorem{theorem}{Theorem}[section]
\newtheorem{thm}[theorem]{Theorem}
\newtheorem{lemma}[theorem]{Lemma}
\newtheorem{lem}[theorem]{Lemma}
\newtheorem{proposition}[theorem]{Proposition}  
\newtheorem{pro}[theorem]{Proposition}
\newtheorem{cor}[theorem]{Corollary} 
\newtheorem{eg}[theorem]{Example}
\newtheorem{defn}[theorem]{Definition} 
\newcommand{\up}{\textup}
\newcommand{\NP}{\textup{\texttt{NP}}}
\newcommand{\NL}{\textup{\texttt{NL}}}
\newcommand{\Ll}{\textup{\texttt{L}}}
\newcommand{\Poly}{\textup{\texttt{P}}}
\newcommand{\CSP}{\operatorname{CSP}}
\newcommand{\Mod}{\operatorname{Mod}}
\newcommand{\SD}{\operatorname{SD}}
\newcommand{\dotcup}{\mathbin{\dot{\cup}}}
\newcommand{\structcup}{\mathbin{\overline{\cup}}}
\def\ra{\rightarrow}
\def\la{\leftarrow}
\begin{document}
\title
[Complexity and polymorphisms for digraph constraint problems]{Complexity and polymorphisms for digraph constraint problems under some basic constructions}

\author{Marcel Jackson}
\address{School of Engineering and Mathematical Sciences, La Trobe University, VIC 3086, Australia}
\email{m.g.jackson@latrobe.edu.au}

\author{Tomasz Kowalski}
\address{School of Engineering and Mathematical Sciences, La Trobe University, VIC 3086, Australia}
\email{t.kowalski@latrobe.edu.au}

\author{Todd Niven}
\address{School of Engineering and Mathematical Sciences, La Trobe University, VIC 3086, Australia}
\email{todd.niven@gmail.com}

\thanks{}
\begin{abstract}
The role of polymorphisms in determining the complexity of constraint satisfaction problems is well established.
  In this context we study the stability of CSP complexity and polymorphism properties under some basic graph theoretic constructions.  As applications we observe a collapse in the applicability of algorithms for CSPs over directed graphs with both a total source and a total sink: the corresponding CSP is solvable by the ``few subpowers algorithm'' if and only if it is solvable by a local consistency check algorithm.  Moreover, we find that the property of ``strict width'' and solvability by few subpowers are unstable under first order reductions.  The analysis also yields a complete characterisation of the main polymorphism properties for digraphs whose symmetric closure is a complete graph.
 \end{abstract}
 \thanks{The first and third authors were supported by ARC Discovery Project DP1094578.  The first author was also supported by ARC Future Fellowship FT120100666 and the second author was supported by ARC Future Fellowship FT100100952.}
\maketitle

The influential \emph{Dichotomy Conjecture} of Feder and Vardi \cite{fedvar} proposes that every constraint satisfaction problem over a fixed finite template is either solvable in polynomial time or \NP-complete.  Some well known dichotomies pre-dating the conjecture are special cases: Schaefer's dichotomy for the complexity of CSPs over two-element templates \cite{sch} and the Hell-Ne\v{s}et\v{r}il dichotomy for graph colouring problems \cite{helnes:90} (which is equivalent to the CSP dichotomy conjecture in the case of simple graph templates). Since Feder and Vardi's seminal contribution, the dichotomy has been established for a wide variety of other restricted cases: some of the broadest cases are the dichotomy for three-element templates (Bulatov \cite{bul3}), for list homomorphism problems (also known as conservative CSPs; Bulatov \cite{bulLH}) and for directed graphs with no sources or sinks (Barto, Kozik, Niven \cite{BKN}).

A key tool in more recent advances, including each of \cite{BKN,bul3,bulLH}, has been the universal-algebraic and combinatorial analysis of ``polymorphisms'' of CSP templates.  Polymorphisms are a generalisation of endomorphisms, and give a CSP template a kind of algebraic structure.  
When $\mathbb{A}$ has polymorphisms satisfying certain equational properties, then  $\CSP(\mathbb{A})$ is amenable to tractable algorithmic solution; this is already present in Feder and Vardi \cite{fedvar} for example, where a solution by local consistency check algorithm is shown to hold in the presence of polymorphisms witnessing what are known as ``near unanimity'' equations.  In the other direction, the failure of $\mathbb{A}$ to have polymorphisms satisfying some families of equations can be used to deduce hardness results for $\CSP(\mathbb{A})$; see Larose and Tesson \cite[Theorem~4.1]{lartes} for example.  The polymorphism structure of a CSP template $\mathbb{A}$ has an extremely tight relationship with the complexity of $\CSP(\mathbb{A})$ and appears to relate to the precise structure of algorithms; for example, it is known that solvability by a local consistency check algorithm is equivalent to the presence of specific polymorphism properties (Barto and Kozik \cite{b-k2,barkoz}).

CSPs over directed graphs (henceforth, digraphs) are particularly pertinent to the investigation of CSP complexity.  First, digraph homomorphism properties are a popular topic in their own right (see Hell and Ne\v{s}et\v{r}il's book \cite{helnes} for example).  Second, Feder and Vardi showed that the CSP dichotomy conjecture in full generality is equivalent to its restriction to digraph CSPs.  Moreover it is now known (Bulin, Delic, Jackson and Niven \cite{BDJN,BDJN2}) that the relationship between digraph CSPs and general fixed template CSPs is much tighter than that.  For every template~$\mathbb{A}$ one can associate a directed graph $\mathcal{D}(\mathbb{A})$ such that $\CSP(\mathbb{A})$ and $\CSP(\mathcal{D}(\mathbb{A}))$ are equivalent up to logspace reductions, and such that most polymorphism properties (with the exception of having a ``Maltsev polymorphism'') are held equivalently on $\mathbb{A}$ and $\mathcal{D}(\mathbb{A})$.  In particular, it follows that consideration of finer level computational complexity issues, (such as separation of complexity classes within \texttt{P}) can also be restricted to directed graphs with little or no loss of generality.

In the present article we examine the stability of polymorphism properties of (finite) digraph templates under some basic digraph-theoretic constructions.  The main focus is ``one-point extensions'', meaning the addition of a new total source (or sink), however we also make some easier observations of constructions such as disjoint union and direct product.  Even from this limited selection of constructions there is some interesting behaviour to be found, and the investigation yields a number of consequences.  
\begin{enumerate}
\item Our analysis enables quite easy explicit construction of digraph CSP templates separating essentially all of the currently investigated polymorphism properties.
\item By applying our constructions to the $\mathcal{D}(\mathbb{A})$ construction of \cite{BDJN,BDJN2}, we can show that there is a different translation from general fixed template CSPs to digraph CSPs which maintains logspace-equivalence of the computational problems yet \emph{fails} to preserve a wide range of polymorphism problems, as well as some algorithmic properties (such as strict width).  After the initial results of \cite{BDJN} were proved in 2011, the question as to whether polymorphism properties were always preserved by such translations was of particular interest.  The results of the present article were obtained before publication of \cite{BDJN} and are referenced there and in the expanded version \cite{BDJN2}.
\item Following Kazda's proof that digraph CSPs with a Maltsev polymorphism also have a majority polymorphism \cite{kaz}, there has been  interest in finding classes of digraphs satisfying other collapses of polymorphism properties: see Mar\'oti and Z\'adori \cite{MZ} for example.  A different interpretation of consequence 2 above is that the class of digraphs with a total source and total sink exhibits very strong collapses in polymorphism properties (including the ones identified in \cite{MZ}).  The class of digraphs with total source and total sink has the extra property that it exhibits the full spectrum of CSP complexity:  every digraph CSP is equivalent under first order reductions to one with a total source and total sink.
\item A corollary of our analysis is a precise polymorphism and computational complexity theoretic characterisation of CSPs over semicomplete digraphs: finite digraphs whose symmetric closure is a complete graph (this includes for example, all tournaments).  The classification of tractable CSPs over semicomplete digraphs is due to Bang-Jensen, Hell and MacGillivray~\cite{BJHM}, however our contribution provides a precise fine-level classification in terms of complexity classes and polymorphisms.
\end{enumerate}

The structure of the article is as follows.  The first few sections are intended to serve as a short survey of background information on computational complexity and the universal algebra of constraint problems.  Section \ref{sec:pre} contains basic information on structures and computational complexity.    Section \ref{sec:poly} continues with background information by presenting an overview of the most commonly encountered polymorphism properties and the known and conjectured relationships with complexity classes and algorithms.   Section \ref{sec:reddig} concerns basic definitions and constructions on directed graphs and the stability of membership in complexity classes under these constructions.  Section \ref{sec:unionspoly} investigates some initial results on polymorphisms and basic equational properties.  

Section \ref{sec:npermtopbot} investigates stability of some more complicated polymorphism properties across one point extensions of core digraphs.  The specific focus is the simultaneous extension by both a total source and total sink, which we find pushes up the value of various parameters relating to the length of term conditions.  As an example, the case of transitive tournaments is given a precise polymorphism ``classification''.  

Section \ref{sec:permutation} then examines one-sided extensions by a total source or a total sink: these can exhibit different behaviour than what happens under two sided extensions, depending on the structure of the digraph to which the construction is being applied.  As an example we classify some of the polymorphism properties of digraphs obtained by successively taking  one-sided extensions of directed cycles (not necessarily on the same ``side''). 

In Section \ref{sec:congdist} we show that two of the most commonly encountered polymorphism properties are not necessarily preserved under taking one-point extensions.  We show that a digraph with both a total source and total sink has substantially limited polymorphism behaviour: the existence of polymorphisms for congruence modularity implies the existence of a near unanimity polymorphism.  The main results are Theorem \ref{thm:collapse} which gives the results described in items 2 and 3 above.

The final sections contain some easy applications of the earlier results.  Section \ref{sec:precomplete} combines examples in the article with the ``few sources and sinks'' theorem of Barto, Kozik and Niven \cite{BKN} to verify the algebraic dichotomy conjecture for digraphs whose symmetric closure is a complete graph: we give a precise polymorphism and computational complexity classification (item 4 above).  In Section \ref{sec:separation} we use our investigations to provide instances of CSPs separating all of the commonly encountered polymorphism properties.  We also make some observations relating to the possible complexities of fixed template CSPs, including the observation that (subject to reasonable complexity theoretic assumptions) there must be more possibilities than first order definability and completeness in one of the classes $\Ll$, $\NL$, $\Mod_p\Ll$ (for prime $p$), $\texttt{P}$ and $\texttt{NP}$.

\section{Preliminaries: Structures and complexity}\label{sec:pre}
\subsection{Relational structures and CSPs}  Here we list elementary definitions relating to relational structures and constraint satisfaction problems.  Aside from formalising some notational conventions, readers familiar with the area can safely skip this subsection.

A \emph{relation} $R$ on a set $A$ is a subset of some finite cartesian power $A^k$ of $A$; so $R$ is a set of $k$-tuples of elements of $A$.  The number $k\in\mathbb{N}$ is the \emph{arity} of $R$ and we say that $R$ is a \emph{$k$-ary relation}.  The edge relation of a digraph is an instance of a relation of arity $2$ (that is, a binary relation).  A finite \emph{relational signature} $\mathcal{R}$ is a family of symbols $R_1,\dots,R_n$, each with an associated number in $\mathbb{N}$, the \emph{arity} of the symbol.  An interpretation of $\mathcal{R}$ is a function from the symbols in $\mathcal{R}$ to relations on $A$ that preserves arity; we let $\mathcal{R}_A$ denote an interpretation of $\mathcal{R}$ as relations on $A$, and for each $R\in\mathcal{R}$ we let $R_A$ denote the interpretation of the relation symbol $R$ on $A$.  An interpretation of $\mathcal{R}$ makes $A$ a relational structure of signature $\mathcal{R}$---or an \emph{$\mathcal{R}$-structure}---which we denote in boldface as $\mathbb{A}=\langle A;\mathcal{R}_{A}\rangle$.  The case where $\mathbb{A}$ is a digraph corresponds to the situation where $\mathcal{R}$ consists of a single binary relation symbol; the set $A$ simply corresponds to the vertices of the digraph, and the binary relation symbol is interpreted as the set of directed edges.  We frequently use graph theoretic notation such as $G=(V,E)$ to denote a digraph on vertices $V$ and edges $E$ (rather than, say $\mathbb{V}=\langle V,E\rangle$).  We will also often write $E(u,v)$ or $u\rightarrow v$ to denote $(u,v)\in E$.

A \emph{homomorphism} $f:\mathbb{A}\to\mathbb{B}$ between two structures $\mathbb{A}$ and $\mathbb{B}$ of the same relational signature is a function $f:A\to B$ that preserves each relation: if $(a_1,\dots,a_k)\in R_A$, then $(f(a_1),\dots,f(a_k))\in R_B$.  This notion of homomorphism coincides with the standard definition of a digraph homomorphism, such as is treated in the book by Hell and Ne\v{s}et\v{r}il \cite{helnes}.   A homomorphism $f:\mathbb{A}\to\mathbb{A}$ is called an \emph{endomorphism}.  It is an \emph{automorphism} if $f$ is an isomorphism, and is a retraction if $f\circ f=f$.  The image of a retraction is said to be a \emph{retract} of $\mathbb{A}$.  A structure for which the only endomorphisms are automorphisms is known as a \emph{core}.  Every finite relational structure has a core retract, unique up to isomorphism.

The \emph{direct product} of a finite family of relational structures $\mathbb{A}_1,\dots,\mathbb{A}_k$ of the same relational signature is the relational structure on the cartesian product $A_1\times\dots\times A_k$ with each relation $R$ (of arity $m$, say) interpreted pointwise: if $\mathbb{A}$ denotes $\mathbb{A}_1\times\dots\times\mathbb{A}_k$ and each $\vec{a}_i$ denotes some element $(a_{i,1},\dots,a_{i,k})\in A_1\times\dots\times A_k$, then $(\vec{a}_1,\dots,\vec{a}_m)\in R_A$ when $(a_{1,j},\dots,a_{m,j})\in R_{A_j}$ for each $j=1,\dots,k$. 

A \emph{polymorphism} of $\mathbb{A}$ is a homomorphism $f:\mathbb{A}^k\to\mathbb{A}$; here $k$ is the \emph{arity} of the polymorphism.

The disjoint union $\mathbb{A}\dotcup\mathbb{B}$ of two relational structures $\mathbb{A}$ and $\mathbb{B}$ of the same signature $\mathcal{R}$ is the structure on the disjoint union $A\mathbin{\dot\cup} B$ with each $R\in \mathcal{R}$ interpreted as $R_A\cup R_B$ (again, a disjoint union).  We also make use of a second variation of disjoint union, which in addition to the fundamental relations on $\mathbb{A}\cup\mathbb{B}$ is given two new unary relations $u_A$ and $u_B$, interpreted as $A$ and $B$ respectively.  We refer to this is the \emph{structured union} and denote it by $\mathbb{A}\mathbin{\overline{\cup}}\mathbb{B}$.  

The \emph{constraint satisfaction problem} $\CSP(\mathbb{A})$ is the class of finite structures admitting a homomorphism into $\mathbb{A}$: that is, $\{\mathbb{B}\mid \exists f:\mathbb{B}\to \mathbb{A}\}$.  It is implicit that these structures are of the same signature.  Subject to considering sensible representatives of each isomorphism class in $\CSP(\mathbb{A})$, we can also think of $\CSP(\mathbb{A})$ as a computational problem (corresponding to deciding membership in $\{\mathbb{B}\mid \exists f:\mathbb{B}\to \mathbb{A}\}$) and move freely between these interpretations.  If $\mathbb{A}'$ is the core retract of $\mathbb{A}$, then $\CSP(\mathbb{A})=\CSP(\mathbb{A}')$, and for this reason we frequently restrict our attention to core structures.  Note that when $(V,E)$ is a digraph with a loop, then it has a one element retract so is a trivial CSP. \ For this reason we will assume that our digraphs have no loops; only some of the arguments require this assumption.  For digraphs $G$, the problem $\CSP(G)$ is sometimes alternatively referred to as the $G$-colouring problem.

\subsection{Computational complexity}
We assume basic familiarity with computational complexity, yet to achieve some level of  completeness and to fix notation, we now recall a few notions of particular importance to the article.  For a more general introduction to computational complexity, including time complexity, space complexity, many-one reductions and Turing reductions, see a text such as Papadimitriou \cite{pap}.  For a logic-based approach see Immerman \cite{imm}.  

We treat our decision problems as language membership problems: that is, deciding membership of input words in the language of YES instances.  In each of the following definitions there is no loss of generality in assuming that our Turing machines have a read-only input, a write-only output (if required) and a working tape.  Space is measured on the working tape and by setting alarm clocks if necessary, there is also no loss of generality in assuming that our Turing machines always halt in the given time or space complexity.  An \emph{accepting computation} means one leading to a pre-designated accepting state, while \emph{acceptance} of an input means that there exists an accepting computation.
\begin{itemize}
\item \texttt{NP}: acceptance in $O(n^c)$ time by a nondeterministic Turing machine.
\item \texttt{P}:  acceptance in $O(n^c)$ time by a deterministic Turing machine.
\item \texttt{L}: acceptance in $O(\log_2(n))$ space by a deterministic Turing machine.
\item \texttt{NL}: acceptance in $O(\log_2(n))$ space  by a nondeterministic Turing machine.
\item $\texttt{Mod}_k(\texttt{L})$ (for $k\in\{2,3,\dots\}$): there is a nondeterministic Turing machine running in $O(\log_2(n))$ space and such that the NO instances are those for which the number of accepting computations is equivalent to $0$ modulo $k$.
\end{itemize} 
The last of these classes is the least familiar, yet arises naturally in problems relating to linear algebra; see  Buntrock et al.~\cite{BDHM}, where it is shown that many of the basic computational problems for the linear algebra of a finite field are complete in $\Mod_k(\texttt{L})$ for relevant $k$.  The $\Mod_k(\texttt{L})$ classes also arise in the context of constraint satisfaction problems in  \cite{ABISV} and \cite{lartes}.  The case when $k=2$ is usually denoted $\oplus\texttt{L}$ and called \emph{parity $L$}.  

The following basic containments are well known:
\[
\texttt{L}\subseteq\left\{\begin{matrix}\texttt{NL}\\
\forall k\ \Mod_k(\texttt{L})
\end{matrix}\right\}\subseteq\texttt{P}\subseteq\texttt{NP}.
\]
The precise relationship between the classes $\Mod_k(\texttt{L})$ for various $k$, and of these classes with \texttt{NL} appears to be unresolved.   There are some technical collapses within the family of classes $\{\Mod_k\Ll\mid k>1\}$: in particular, if the number $k$ is written as a product of distinct prime powers $p_1^{k_1}\dots p_n^{k_n}$, then $\Mod_{k}\Ll$ coincides with $\Mod_{p_1\dots p_n}\Ll$; see Buntrock et al.~\cite{BDHM}.  The following facts are also known 
(where $\Ll^{\Mod_p\Ll}$ refers to the class $\texttt{L}$ as defined on oracles for $\Mod_p\Ll$ languages: see \cite[\S14.3, \S17.1]{pap} for general discussion of this concept and notation).
\begin{lem}\label{lem:modpl}\hfill
\begin{itemize}
\item \up(\cite{BDHM}\up) $\Mod_k\Ll\subseteq \Mod_n\Ll$ if $k$ divides $n$.
\item \up(\cite{BDHM}\up) For all $k>1$, the class $\Mod_k\Ll$ is closed under unions of languages.
\item \up(\cite{BDHM}\up) If $p$ is prime then $\Mod_p\Ll$ is closed under intersections of languages.
\item \up(\cite{HRV}\up) If $p$ is prime then $\Mod_p\Ll$ is closed under logspace Turing reductions; so $\Ll^{\Mod_p\Ll}=\Mod_p\Ll$.
\end{itemize}
\end{lem}
It is currently unknown if there is a containment between $\NL$ and $\Mod_k(\texttt{L})$ for some~$k$.  
Similarly, at the time of writing, it remains a possibility that $\Mod_m(\texttt{L})$ and $\Mod_n(\texttt{L})$  are incomparable unless the set of prime factors of $m$ is a subset of the set of prime factors of $n$, or vice versa. It is also appears unknown whether $\Mod_k(\texttt{L})$ is in general closed under intersection.  It is, of course, also unknown if any of the containments mentioned above are strict.

Finally for this subsection, we remind the reader of the definition of a first order reduction.  As is explained in Immerman \cite{imm}, it is convenient to assume throughout that our structures come with some predetermined linear order $<$.  

Fix two relational signatures $\mathcal{R}$ and $\mathcal{S}$.  For variables $\vec{x}=x_0,\dots,x_{n-1}$ and $\vec{y}=y_0,\dots,y_{p-1}$ a first order $\mathcal{R}\cup\{<\}$-formula $\phi(\vec{x},\vec{y})$ determines a parameterised family of $n$-ary relations on any $\mathcal{R}\cup\{<\}$-structure $\mathbb{A}$: for any $\vec{a}=a_0,\dots,a_{p-1}\in A$ (the parameters), the solution set of $\phi(\vec{x},\vec{a})$ is an $n$-ary relation.  Similarly, for any fundamental relation $S\in\mathcal{S}$ of arity $k\in\mathbb{N}$, a $(kn+p)$-ary formula $\psi_S(\vec{x_1},\dots,\vec{x_{k}},\vec{y})$ (where $\vec{x_i}$ denotes $x_{i,0},\dots,x_{i,{n-1}}$) determines a parameterised family of $k$-ary relations on $n$-tuples: for any interpretation $\vec{a}$ of $\vec{y}$ in $A$ we obtain an $k$-ary relation on $A^n$.    Then, for any evaluation of $\vec{y}$ as $\vec{a}$ in $A$, the family
\[
\{\phi(\vec{x},\vec{a})\}\cup \{\psi_S(\vec{x_1},\dots,\vec{x_{k}},\vec{a})\mid S\in\mathcal{S}\}
\]
defines an $\mathcal{S}$-structure $\mu_{\vec{a}}(\mathbb{A})$, whose universe $U$ is the solution set of $\phi(\vec{x},\vec{a})$, as a subset of $A^n$, and where the relations on $\mu_{\vec{a}}(\mathbb{A})$ are the restriction to $U$ of the solution sets of $\psi_S(\vec{x_1},\dots,\vec{x_{k}},\vec{a})$, for each $S\in\mathcal{S}$.  
These formul{\ae} form an \emph{$n$-ary first order reduction with $p$-parameters} from a decision problem $\Pi_1$  on $\mathcal{R}$-structures to a decision problem $\Pi_2$ on $\mathcal{S}$-structures if for every $\mathcal{R}$-structure $\mathbb{A}$ with at least $p$ elements and every $p$-tuple of \emph{pairwise distinct} elements $a_0,\dots,a_{p-1}\in A$ we have $\mathbb{A}\in\Pi_1$ if and only if $\mu_{\vec{a}}(\mathbb{A})\in \Pi_2$.  Note that a linear order $<$ on $\mu_{\vec{a}}(\mathbb{A})$ can always be easily definable in terms of $<$ on $\mathbb{A}$.

First order reductions are computable in logspace, and thus provide a finer separation tool for comparing computational complexity than logspace (or worse, polynomial time) many-one reductions.  In particular, they enable a meaningful notion of completeness for classes such as logspace and nondeterministic logspace; see \cite{imm} for example.  Each of the complexity classes mentioned above is closed under logspace many-one reductions and hence first order reductions.

\section{Polymorphisms, complexity and algorithms}\label{sec:poly}
  Readers familiar with the role of universal algebra in CSP complexity can skip this section, 
though they may want to briefly refer to the precise formulations of various term conditions we present for use in the article.  The methods of the article will not require a knowledge of these universal algebraic methods, however their role in CSP complexity is too deeply entwined for the article to avoid some elaboration of the basic connections.  In particular, the kinds of polymorphism properties we study throughout the article are of interest precisely because of the fact that they correspond to natural boundaries suggested by universal algebraic concepts.  In this section we present a broad overview of the connections: the presentation is not intended to be encyclopaedic, but rather enough to motivate the concepts and to give meaning to the definitions we require.

The role of polymorphisms in determining the computational complexity of (membership in) $\CSP(\mathbb{A})$ is very precise: if the underlying sets of $\mathbb{A}$ and $\mathbb{B}$ coincide and if every polymorphism of $\mathbb{B}$ is a polymorphism of $\mathbb{A}$ then $\CSP(\mathbb{B})$ reduces to $\CSP(\mathbb{A})$.  Ideas introduced in Jeavons \cite{jea} and expanded in Bulatov, Jeavons and Krokhin \cite{BJK} showed that this can be extended substantially further.  Recall that if $\mathbf{A}$ is an algebraic structure, then the \emph{variety} generated by $\mathbf{A}$ is the class of algebraic structures arising as homomorphic images of subalgebras of direct products of $\mathbf{A}$; equivalently, it is the class of all algebras (with the same kinds of operations) satisfying all identities of $\mathbf{A}$.  If the set $A$ is endowed with the family of all polymorphisms of $\mathbb{A}$, then $A$ becomes an algebraic structure $\mathbf{A}$ (of infinite type), which we refer to as the \emph{polymorphism algebra of $\mathbb{A}$}.  The following makes use of the well-known Galois connection between clones of operations and clones of relations, as generated by primitive positive formul{\ae} (\emph{pp-formul{\ae}}); it is essentially a presentation of results in \cite{BJK}, with some refinements from Larose and Tesson \cite{lartes}.  
We use $\operatorname{Inv}$ to denote the relations invariant under the operations of an algebra, and $\operatorname{Clo}$ to denote the relational clone generated by a set of relations using pp-formul{\ae}.
\begin{thm}\label{thm:hsp} \up(Bulatov, Jeavons, Krokhin \cite{BJK}, Larose, Tesson \cite{lartes}.\up)
Let $\mathbb{A}=\langle A,\mathcal{R}_{A}\rangle$ and $\mathbb{B}=\langle B,\mathcal{S}_{B}\rangle$ be relational structures.  If ${\bf A}$ is an algebra on the set $A$ and ${\bf B}$ is an algebra on the set $B$ such that 
\begin{enumerate}
\item $\mathbf{B}$ is contained in the variety of $\mathbf{A}$,
\item every term function of $\mathbf{B}$ is a polymorphism of $\mathbb{B}$,
\item every polymorphism of $\mathbb{A}$ is a term function of ${\bf A}$
\end{enumerate}
 then for any finite subset $\mathcal{S}_{B}'\subseteq \mathcal{S}_{B}$ 
 there is a finite subset $\mathcal{R}_{A}'\subseteq \mathcal{R}_{A}$ such that  $\CSP(\langle B,\mathcal{S}_{B}'\rangle)$ reduces in logspace to $\CSP(\langle A,\mathcal{R}_{A}'\rangle)$.  In particular, if $\mathbb{B}$ has finite signature, then $\CSP(\mathbb{B})$ reduces in logspace to $\CSP(\mathbb{A})$.
\end{thm}
\begin{proof}[Proof sketch]
We give references to both \cite{lartes} and \cite{BJK} as appropriate.   By (2), $\mathcal{S}_{B}'$ is a finite subset of $\operatorname{Inv}(\mathbf{B})$.  By (1) there is ${\bf C}$ and $n$ with $\exists \phi:{\bf C}\twoheadrightarrow {\bf B}$ and ${\bf C}\leq {\bf A}^n$.  Using $\phi$, a first order reduction can be found from $\CSP(\mathcal{S}_{B}')$ to a CSP over some finite subset $\mathcal{T}_{\bf C}\subseteq \operatorname{Inv}(\mathbf{C})$; see \cite[Theorem 5.4]{BJK} and \cite[Lemma~2.2]{lartes}.   Using ${\bf C}\leq {\bf A}^n$, we have $\mathcal{T}_{\bf C}\subseteq \operatorname{Inv}(\mathbf{A}^n)$.  Next, there is a first order reduction from the CSP over any finite subset of $\operatorname{Inv}(\mathbf{A}^n)$ to the CSP over a suitable finite subset of $\operatorname{Inv}(\mathbf{A})$; in particular, from $\CSP(\langle A^n,\mathcal{T}_C\rangle)$ to the CSP over some finite subset $\mathcal{U}_{A}$ of $\operatorname{Inv}(\mathbf{A})$.  (This reduction is by repeated application of \cite[Lemma~2.4]{lartes}, and is not given in \cite{BJK}.)  By (3) we have that $\operatorname{Inv}(\mathbf{A})$ is a subset of $\operatorname{Clo}(\mathbb{A})$ and so $\mathcal{U}_{A}$ is pp-definable from the relations $\mathcal{R}_{A}$ of $\mathbb{A}$.  Because pp-formul{\ae} involve only finitely many relations, this provides a logspace reduction from $\CSP(\langle A,\mathcal{U}_{A}\rangle)$ to $\CSP(\langle A,\mathcal{R}_{A}'\rangle)$, for some finite subset $\mathcal{R}_{A}'$ of $\mathcal{R}_{A}$, as required; see Jeavons \cite[Corollary 4.1]{jea} and Lemmas 2.8--2.11 of \cite{lartes}, though pp-reductions go back at least to Schaefer \cite{sch}.  When $\mathbb{B}$ is of finite signature, start with $\mathcal{S}_{B}'=\mathcal{S}_{B}$.
\end{proof}
 This gave a new perspective on CSP complexity: if $\CSP(\mathbb{B})$ is complete in some complexity class $\mathcal{K}$, then for $\CSP(\mathbb{A})$ to avoid being $\mathcal{K}$-hard, the polymorphism algebra of  $\mathbb{A}$ must avoid algebras whose term functions are amongst the polymorphisms of $\mathbb{B}$.  
Conveniently, a number of well known CSP problems turn out to correspond to basic kinds of algebraic structure.  In the following list (whose enumeration with missing (iii) will become clear in due course), we recall some polymorphism algebras of classic computational problems and the consequences of Theorem \ref{thm:hsp}.  A far more rigorous treatment can be found in the work of Larose and Tesson \cite{lartes}, who have shown that in general, the critical ``hardness'' statements of relevance can be made in terms of first order reductions rather than simply logspace reductions (this is evident in the proof sketch: only the removal of equality constraints requires logspace).
\begin{itemize}
\item[(i)] The polymorphisms for classic $\NP$-complete  problems such as 3SAT and Graph 3-colourability are essentially degenerate: they consist of projections, automorphisms and their compositions.  In the universal algebra lingo, these are known as algebras of ``unary type''. \emph{To avoid being \NP-complete, the polymorphism algebra of a template must generate a variety avoiding unary type.}
\item[(ii)] solvability of linear equations over a field of order $p$ (the archetypal $\Mod_p\Ll$-complete problem) has polymorphisms the same as those of a module over $\mathbb{Z}_p$.  \emph{To avoid being $\Mod_p\Ll$-hard for some $p$, the polymorphism algebra of a template must generate a variety avoiding module-type algebras.}
\item[(iv)] directed graph unreachability (an $\NL$-complete CSP) has  polymorphisms the same as the term functions of a lattice.  \emph{To avoid being $\NL$-hard, the polymorphism algebra of a template must generate a variety avoiding lattice-like algebras.}
\item[(v)] The polymorphisms of HORN3SAT (which is $\Poly$-complete) coincide with the term functions of the two element semilattice.  \emph{To avoid being $\Poly$-hard, the polymorphism algebra of a template must generate a variety avoiding semilattice-like algebras.}
\end{itemize}

The first instance in this list---avoiding algebras of unary type---is one half of the \emph{algebraic dichotomy conjecture}, which states that for a core CSP template~$\mathbb{A}$, the problem~$\CSP(\mathbb{A})$ is $\NP$-complete if the variety generated by the polymorphism algebra contains algebras of unary type, and otherwise is tractable.  

The analysis of which properties guarantee that a finitely generated variety avoids objects of unary type, semilattice type, and so on, is a substantially developed theme within universal algebra.  \emph{Tame congruence theory} \cite{hobmck}, has focussed on the presence of objects that fall into one of the following 5 types (in their standard numbering):
\begin{enumerate}
\item Unary type;
\item Affine type (modules);
\item Boolean type;
\item Lattice type;
\item Semilattice type.
\end{enumerate}
All except Boolean type were loosely introduced by way of observations on CSP-hardness in discussion above: each of items (i), (ii), (iv) and (v) correspond to their respective arabic numeral type.  The missing case of Boolean type (type 3) corresponds to trivial CSP problems and so does not play a role in establishing hardness results; we omit further discussion of this type of structure.

Figure \ref{fig:maltsev} presents a diagram of what might be called the ``universal algebraic geography of CSPs'' (it is slightly simplified from the full picture for universal algebras).
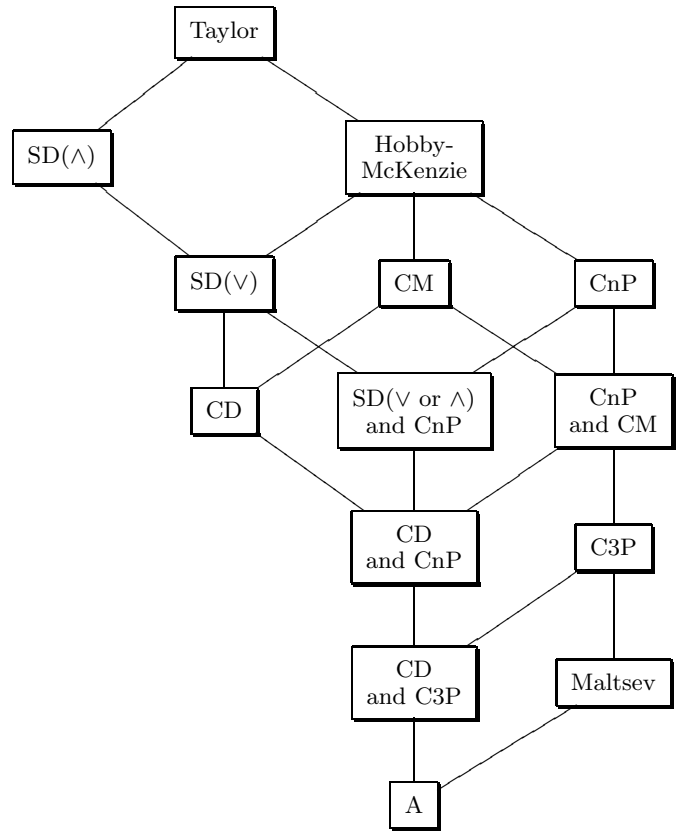
\begin{figure}[h]
  \begin{center}
    \begin{small}
      \xymatrix{ &&& *++[F-,]\txt{Taylor}\ar@{-}[rd] &
        \\
        &&*++[F-,]{\txt{SD$(\wedge)$}}\ar@{-}[ur] &
        &*++[F-,]{\txt{Hobby- \\ McKenzie}} &
        \\
        &&&*++[F-,]{\txt{SD$(\vee)$}}\ar@{-}[ur]\ar@{-}[ul] &
        *++[F-,]\txt{CM}\ar@{-}[u] & *++[F-,]\txt{CnP}\ar@{-}[ul] &
        \\
        &&&*++[F-,]\txt{CD}\ar@{-}[u]\ar@{-}[ur] &
        *++[F-,]{\txt{SD$(\vee\text{ or }\wedge)$ \\ and CnP}}\ar@{-}[ul]\ar@{-}[ur]
        & *++[F-,]\txt{CnP \\ and CM}\ar@{-}[u]\ar@{-}[ul] &
        \\
        &&& & *++[F-,]\txt{CD \\ and
          CnP}\ar@{-}[u]\ar@{-}[ur]\ar@{-}[ul] &
        *++[F-,]\txt{C3P}\ar@{-}[u]
        \\
        &&& & *++[F-,]\txt{CD \\ and C3P}\ar@{-}[u]\ar@{-}[ur] &
        *++[F-,]\txt{Maltsev}\ar@{-}[u]
        \\
        &&& & *++[F-,]\txt{A}\ar@{-}[u]\ar@{-}[ur] & }
    \end{small}
  \end{center}\caption{The universal algebraic geography of tractable CSPs.}\label{fig:maltsev}
\end{figure}
The remainder of this section is devoted to  giving meaning to Figure~\ref{fig:maltsev}, by describing the regions in the picture with reference to the five types just described and providing the term conditions that they correspond to.   All polymorphism conditions characterising the properties in Figure \ref{fig:maltsev} will be in terms of idempotent polymorphisms: a polymorphism $p$ satisfying $p(x,x\dots,x)=x$ (for all $x$).  
The nodes equal to or above ``$\text{SD}(\vee\text{ or }\wedge)$ and $\text{C}n\text{P}$'' are referred to as the \emph{upper level}, with the remaining nodes form the \emph{lower levels}.  

As a general rule, the \emph{node labels indicate a property that is assumed to hold at that point and in all lower cases}.  We say that a CSP lies at one of the nodes if it has the property stated as the node label, but none of the lower properties.  Infimums in the order shown in Figure \ref{fig:maltsev} correspond to intersections of classes: thus $\SD(\vee)$ corresponds to the property of having both the Hobby-McKenzie property and the $\SD(\wedge)$ property.

From an algebraic perspective, the various conditions considered  are examples of Maltsev conditions and the  reader wanting a more detailed treatment is encouraged to consult a general text such as Bergman~\cite[\S4.7,4.8]{ber} or a more specific book such Kearnes and Kiss \cite{keakis} for more information.  The article Kozik, Krokhin, Valeriote and Willard \cite{KKVW} is another useful reference for most of the conditions considered below.

\subsection{Taylor.}  This corresponds to algebras whose variety avoids unary type, 
and takes its name from the term conditions derived in Hobby, McKenzie \cite[Chapter~9]{hobmck} based on the work of  Taylor \cite{tay}.
The \emph{algebraic dichotomy conjecture} states (for core relational structure $\mathbb{A}$) that $\CSP(\mathbb{A})$ is tractable if and only if the variety of the polymorphism algebra $\mathbf{A}$ avoids the unary type: in other words, that it lies somewhere in the diagram of Figure \ref{fig:maltsev}.

We use the following conditions characterising the Taylor property due to Mar\'oti and McKenzie \cite{marmck} (but see also Barto and Kozik \cite{b-k3}).
The polymorphism algebra $\mathbf{A}$ of a finite relational structure $\mathbb{A}$ (with finitely many relations) generates a variety avoiding unary type if and only if $\mathbb{A}$ has an idempotent $n$-ary polymorphism $w(x_1,\dots,x_n)$ (for some $n>1$) satisfying for all $x, y\in A$
\[
w(y,x,\dots ,x,x)=w(x,y,\dots,x,x)=\dots=w(x,x,\dots,y,x)=w(x,x,\dots,x,y).
\]
Such a polymorphism is called a \emph{weak near unanimity} polymorphism, or ``weak~NU''.
Equivalent conditions involving either a pair of ternary polymorphisms, or a single $4$-ary polymorphism can be found in Kearnes, Markovi\'c and McKenzie \cite{KMM}.

\subsection{$\SD(\wedge)$: Congruence meet semidistributivity.} This corresponds to the class of algebras whose variety avoids both the unary type and the affine type.  The title comes from the relationship with congruence lattices of algebras: strictly it is the variety generated by the polymorphism algebra that is said to have the congruence meet semidistributivity property, however we abuse the phrase and will refer to the $\SD(\wedge)$ property even for the relational structure from which the polymorphism algebra arises.  

The $\SD(\wedge)$ property plays a central role in tractable CSPs: it is known (Barto and Kozik \cite{barkoz}) that a CSP over a core structure $\mathbb{A}$ is solvable by local consistency check if and only if the polymorphism algebra of $\mathbb{A}$ has the $\SD(\wedge)$ property.  The polymorphism algebra $\mathbf{A}$ of a finite relational structure $\mathbb{A}$ (with finitely many relations) generates a variety avoiding unary and affine types if and only if $\mathbb{A}$ has a $3$-ary weak NU $w_1(x_1,x_2,x_3)$ and a $4$-ary weak NU $w_2(x_1,x_2,x_3,x_4)$ such that $w_1(y,x,x)=w_2(y,x,x,x)$; 
see Kozik, Krokhin, Valeriote and Willard \cite[Theorem~2.8]{KKVW}.

Two subclasses of $\SD(\wedge)$ not shown in Figure \ref{fig:maltsev} are the properties of carrying \emph{totally symmetric idempotent} (TSI) polymorphisms of all arities and the property of carrying polymorphisms giving $\mathbf{A}$ the structure of a $2$-semilattice.  These are discussed separately in Subsection \ref{subsec:tsi}.  

\subsection{Hobby-McKenzie} This corresponds to the class of algebras whose variety avoids both the unary type and the semilattice type.  The authors are not aware of any conjecture tying this to a computational property, however it is a natural future candidate to play some role, and is useful in the present article because it intersects with the class $\SD(\wedge)$ to give $\SD(\vee)$.  We speculate some possible lines of investigation for this class in the conclusion section of the article.

The following equations are from Hobby and McKenzie \cite{hobmck}.  A finite algebra ${\bf A}$ generates a variety with the Hobby-McKenzie property if and only if there is an $n\geq 0$ and $3$-ary idempotent terms $d_0(x,y,z)$, \dots, $d_n(x,y,z)$, $p(x,y,z)$ and $e_0(x,y,z)$, \dots, $e_n(x,y,z)$ satisfying (for all $x,y,z\in A$):
\begin{align*}
x&=d_0(x,y,z)\qquad&\mbox{and}\qquad&e_n(x,y,z)=z\\
d_i(x,y,y)&=d_{i+1}(x,y,y)\qquad&\mbox{and}\qquad&e_i(x,y,y)=e_{i+1}(x,y,y) \quad \mbox{ for even $i<n$}\\
d_i(x,x,y)&=d_{i+1}(x,x,y)\qquad&\mbox{and}\qquad&e_i(x,x,y)=e_{i+1}(x,x,y)\quad \mbox{ for odd $i<n$}\\
d_n(x,y,y)&=p(x,y,y)\qquad&\mbox{and}\qquad&p(x,x,y)=e_{0}(x,x,y)\\
d_i(x,y,x)&=d_{i+1}(x,y,x)\qquad&\mbox{and}\qquad&e_j(x,y,x)=e_{j+1}(x,y,x)\\
&&&\mbox{ for odd $i<n$ and even $j<n$}
\end{align*}
We will say that a CSP template $\mathbb{A}$ has Hobby-McKenzie polymorphisms if its polymorphism algebra has Hobby-McKenzie terms (equivalently, if there are polymorphisms $d_0(x,y,z)$, \dots, $d_n(x,y,z)$, $p(x,y,z)$ and $e_0(x,y,z)$, \dots, $e_n(x,y,z)$ of $\mathbb{A}$ satisfying the Hobby-McKenzie equations).  We often omit reference to $d_0$ and $e_n$ (which are projections) and start with $x=d_1(x,y,y)$ and so on.

\subsection{$\SD(\vee)$: congruence join-semidistributivity.}  This corresponds to those algebras whose variety avoids the affine type, the semilattice type and the unary type.  This class is conjectured in Larose and Tesson \cite{lartes} to contain precisely the polymorphism algebras of CSPs solvable in nondeterministic logspace.  If the polymorphism algebra $\mathbf{A}$ of a CSP is not in this class, then its variety contains either an algebra of unary type (so $\CSP(\mathbb{A})$ is \NP-complete), or an algebra of semilattice type (and is $\Poly$-hard) or an algebra of affine type (so is $\Mod_p\Ll$-hard).    If $\mathbf{A}$ lies in this class but not lower, then it is known that $\CSP(\mathbb{A})$ is $\NL$-hard.  We do not recall a separate term-equation classification for $\SD(\vee)$ here, as the property is equivalent to verifying both $\SD(\wedge)$ and Hobby-McKenzie. 

\subsection{C$n$P; Congruence $n$-permutability (for some $n\geq 2$).}  This corresponds to those algebras avoiding unary, semilattice and lattice type; the name comes from a property on congruences of algebras in the variety but as usual we abuse the phrase and will refer to a digraph as having the congruence $n$-permutability property if the variety generated by its polymorphism algebra has this property.  The cases $n=3$ and $n=2$ are given separate nodes in the diagram, due to the fact that they imply the congruence modularity property. 

Polymorphism algebras lying at the C$n$P node of Figure \ref{fig:maltsev} (but not lower; so they admit affine type) are from templates for $\Mod_p\Ll$-hard CSPs, for some prime $p$ (Larose and Tesson \cite{lartes}).  
A finite algebra ${\bf A}$ generates a congruence $n$-permutable variety (omitting lattice, semilattice and unary types) if and only if there is an $n\geq 1$ and $3$-ary idempotent terms $p_0(x,y,z),\dots,p_n(x,y,z)$ satisfying (for all $x,y,z\in A$):
\begin{align*}
x&=p_0(x,y,z)\\
p_i(x,x,y)&=p_{i+1}(x,y,y)\\
p_n(x,y,z)&=z.
\end{align*}
As before we frequently omit reference to $p_0$ and $p_n$, instead starting with $x=p_1(x,y,y)$ and ending with $p_{n-1}(x,x,y)=y$.   The notion of ``congruence $n$-permu\-table'' is meaningless for $n=1$, however it is convenient to allow it to be interpreted as meaning ``is a one-element structure''.  This is consistent with the equational characterisation, which are equivalent to $(\forall x)(\forall y)\ x=y$ when $n=1$.

\subsection{$\SD(\vee\text{ or }\wedge)$ and C$n$P}  The last class in the top ``layer'' of Figure \ref{fig:maltsev} is the intersection of earlier classes.  This class (and lower) is also often conjectured to consist the polymorphism algebras of those $\mathbb{A}$ for which $\CSP(\mathbb{A})$ is solvable in logspace.  The ``or'' in $\SD(\vee\text{ or }\wedge)$ is because both $\SD(\wedge)$ and $\SD(\vee)$ intersect with C$n$P to the same class of idempotent algebras.  

\subsection{CD: Congruence distributivity.}  This is a further property related to congruence lattices of algebras across a whole variety.  A finite algebra ${\bf A}$ generates a congruence distributive variety if and only if it has terms $J_0(x,y,z),\dots,J_n(x,y,z)$ satisfying
\begin{align*}
J_0(x,y,z)&=x\\
J_i(x,y,x)&=x\mbox{ for all $i\leq n$}\\
J_i(x,x,y)&=J_{i+1}(x,x,y)\mbox{ for all even $i<n$}\\
J_i(x,y,y)&=J_{i+1}(x,y,y)\mbox{ for all odd $i<n$}\\
J_n(x,y,z)&=z.
\end{align*}
These terms come from J\'onsson \cite{jon} and are usually called \emph{J\'onsson terms}.

For the polymorphism algebras of a finite relational structure (with only finitely many relations), this condition has been shown by Barto \cite{bar:NU} to be equivalent to the presence of a weak NU polymorphism $n(x_1,\dots,x_n)$ such that (in addition to the equations already described for weak NUs) the equation $n(y,x,\dots,x)=x$ holds.   Such terms are known as \emph{near unanimity terms} (and are the origin of the abbreviation NU), but were considered independently by Feder and Vardi \cite{fedvar} who showed that a CSP template has the ``strict width'' property if and only if it has an NU polymorphism.  The strict width property is equivalent to every locally consistent partial solution extending to a full solution.  In general, there are finite algebras generating congruence distributive varieties that do not have NU terms: they only arise as polymorphism algebras over relational structures with infinitely many relations.  Even though this article concerns itself with digraphs (just one relation), there are some instances where it is more convenient to use the general congruence distributivity conditions (and then call on \cite{bar:NU} to deduce results about NU conditions and strict width).

\subsection{CM: congruence modularity}\label{subsec:CM}
This condition is a generalisation of congruence distributivity and is characterised by the existence of ternary terms $s_0(x,y,z)$,\ldots, $s_{2n}(x,y,z)$, $p(x,y,z)$ satisfying 
\begin{align*}
s_0(x,y,z)&=x\\
s_i(x,y,x)&=x\mbox{ for all $i\leq 2n$}\\
s_i(x,y,y)&=s_{i+1}(x,y,y)\mbox{ for all even $i<2n$}\\
s_i(x,y,y)&=s_{i+1}(x,y,y)\mbox{ for all odd $i<2n$}\\
s_{2n}(x,y,y)&=p(x,y,y)\\
p(x,x,y)&=y
\end{align*} 
This characterisation was introduced by Gumm \cite{gum} and the terms $s_0,\dots,s_{2n},p$ are usually referred to as \emph{Gumm terms}; an earlier condition was given by Day~\cite{day}.
Libor Barto has recently shown \cite{bar2016} that for polymorphism algebras of finite relational structures (with finitely many relations), the congruence modularity property is equivalent to the existence of what is known as \emph{edge terms}; this solves a problem widely referred to as \emph{Valeriote's conjecture}.  The result has deep implications for CSP complexity: the presence of edge polymorphisms was shown in \cite{BIMMVW,IMMVW} to completely classify solvability of a CSP by way of a particular kind of algorithm generalising Gaussian elimination, known as the \emph{few subpowers algorithm} \cite{BIMMVW,IMMVW}.  While we make reference to the few subpowers property later, we will not require its precise definition in this article.  The results of this article also do not depend on Barto's solution to Valeriote's conjecture.

\subsection{Lower levels in Figure \ref{fig:maltsev}.}  The lower end of Figure \ref{fig:maltsev} mostly consists of some classes generated by intersecting classes higher up in the picture, as well as some important special cases.    As mentioned above, C$3$P and Maltsev are just special cases of C$n$P (which is really an infinite hierarchy).  It is known that congruence $3$-permutability implies congruence modularity, so the intersection of the class of congruence $n$-permutable varieties and the congruence modular varieties contains the class of congruence $3$-permutable varieties.  Congruence $2$-permutability is usually known as \emph{congruence permutability} and corresponds to the presence of a single term $p(x,y,z)$ satisfying $x=p(x,y,y)=p(y,y,x)$.  Such varieties are usually called \emph{Maltsev varieties}, after Maltsev \cite{mal}.  Maltsev varieties have played an important role in the theory of universal algebra, and also in the early development of new algorithms for CSP solution: the generalisation of Gaussian elimination to Maltsev polymorphisms was a crucial technique used by Bulatov in his extension of Schaefer's Dichotomy to $3$-element templates~\cite{bul3}.

A variety that is both Maltsev and Congruence Distributive is said to be \emph{arithmetical}, so the intersection of these classes has been given the abbreviated title~``A'' in Figure~\ref{fig:maltsev}.  
Pixley \cite{pix} showed that the following easy term condition characterises arithmeticity: there is a ternary polymorphism $m$ satisfying $m(x,y,x)=m(x,y,y)=m(y,y,x)=x$.

\subsection{Totally symmetric idempotent polymorphisms}\label{subsec:tsi} Two other commonly encountered polymorphism properties are the \emph{$2$-semilattice polymorphism} and \emph{totally symmetric idempotent polymorphisms}: these are special cases of SD$(\wedge)$, but adding them to the diagram creates extra ``wings'' and many new intersections to the left of Figure \ref{fig:maltsev}.  

The TSI polymorphism property means that for each~$n$ there is an $n$-ary idempotent polymorphism $p_n(x_1,\dots,x_n)$ such that whenever $x_1,\dots,x_n,y_1,\dots,y_n$ is a list of variables, possibly with repeats, and $\{x_1,\dots,x_n\}=\{y_1,\dots,y_n\}$ then $p(x_1,\dots,x_n)=p(y_1,\dots,y_n)$.  An algebra is a $2$-semilattice if there is a binary term~$\cdot$ satisfying $x\cdot y=y\cdot x$ and $x\cdot (x\cdot y)=x\cdot y$.

Examples based over digraphs built from directed cycles will be revisited throughout the article.
\begin{eg}\label{eg:cycle}
The directed cycle $\mathbb{C}_n$ has a Maltsev polymorphism and a ternary NU polymorphism.  So $\CSP(\mathbb{C}_n)$ lies at the node labelled by ``A''.  If $n$ is odd, $\mathbb{C}_n$ has a $2$-semilattice polymorphism, but if $n$ is even it does not have any commutative binary polymorphism.  More generally, $\mathbb{C}_n$ does not have an $n$-ary totally symmetric polymorphism.
\end{eg}
\begin{proof}
These facts are well known, but give some further elaboration of the concepts in Figure \ref{fig:maltsev} and are useful later in the article.  To define a Maltsev polymorphism, consider a triple $(a,b,c)$.  If $|\{a,b,c\}|=1$ or $3$ then let $p(a,b,c)=a$.  Otherwise, let $p(a,b,c)$ be the minority value (that is, the value that is not repeated).  One may similarly define a ternary NU polymorphism $n(x,y,z)$ in the same way, except that when $|\{a,b,c\}|=2$, we let $n(a,b,c)$ take the majority value (that is, the repeated value).  Such a polymorphism is known as a \emph{majority} polymorphism, and makes the polymorphism algebra of $\mathbb{C}_n$ generate a congruence distributive variety, a special case of congruence meet semidistributivity.
For $2$-semilattice where $n$ is odd, let $m:=\lfloor n/2\rfloor$ and let $[k]_n$ denote the smallest nonnegative integer equivalent to $k$ modulo $n$.  Define the binary $\cdot$ on $\{0,\dots,n-1\}$ by 
\[
i\cdot j:=\begin{cases}
i\text{ if }[j-i]_n\leq m\\
j\text{ otherwise}.
\end{cases}
\]
If $n$ is even, then let $m=n/2$ and consider the value $i$ of $0\cdot m$.  Now $(0,m)\ra(1,m+1)\ra(2,m+2)\ra\dots\ra (m,0)$.  So as $\cdot$ is a polymorphism, we may follow the directed edges from $i$ as $i\ra i+1\ra\dots \ra i+m$ to find $m\cdot 0=i+m$.  But this contradicts commutativity of $\cdot$ and $i=0\cdot m$.

For the final claim simply observe that $(0,1,\dots,n-1)\ra(1,2,\dots,n-1,0)$ so that if $t$ is any $n$-ary polymorphism on $\mathbb{C}_n$ we have $(0,1,\dots,n-1)\ra(1,2,\dots,n-1,0)$ giving $t(0,1,\dots,n-1)\ra t(1,2,\dots,n-1,0)$.  So $t(0,1,\dots,n-1)\neq t(1,2,\dots,n-1,0)$ and $t$ is not totally symmetric.
\end{proof}

\section{Digraphs and constructions.}\label{sec:reddig}
The majority of relational structures in this article will be digraphs.  In this section we present the basic notation and constructions on digraphs that we use in the article and present some basic observations relating the stability of CSP complexity under the constructions.
\begin{defn}\hfill
\begin{enumerate}
\item The \emph{directed cycle} on $\{0,1,\dots,k-1\}$ is denoted by $\mathbb{C}_k$; with edge relation $i\rightarrow i+1$ \up(with addition modulo $k$\up).  
\item The transitive tournament on $\{0,\dots,k-1\}$ is denoted by $\mathbb{T}_k$ and has edge relation equal to the usual strict inequality relation~$<$.  For any integers $i\leq j$
we may represent $\mathbb{T}_{j-i+1}$ on vertices $i,i+1,\dots,j$, with edges
$k\rightarrow \ell$ if $k<\ell$; we denote this representative by
$[i,j]=(\{ i,i+1,\dots, j\},<)$.
\end{enumerate}
\end{defn}

\begin{defn}
Let $G=(V,E)$ be a digraph, then for integers $i\leq 0\leq j$ the
digraph $G^{[i,j]}$ will denote the digraph on vertices $\{
i,{i+1},\dots,{-1}\} \cup V \cup \{ 1,2,\dots, j\}$, \up(we assume that
$V$ and $\{ i,i+1,\dots,j\}$ are disjoint, renaming the elements of $V$ if necessary\up) with edge relation
\begin{align*}
&\left\{ (v_1,v_2)\mid v_1< v_2 \text{ in } [i,j]  \right\}\\
\cup&\left\{ (v_1,v_2)\mid v_1,v_2\in V\text{ and }E(v_1,v_2) \right\}\\
\cup&\left\{ (v_1,v_2)\mid v_1<0 \text{ in $[i,j]$ and } v_2\in V\right\}\\
\cup&\left\{ (v_1,v_2)\mid v_2>0 \text{ in  $[i,j]$ and } v_1\in V\right\}.
\end{align*}
\end{defn}

Intuitively $G^{[i,j]}$ can be thought of as the digraph obtained by
replacing the $0$ vertex in $[i,j]$ with the digraph $G$. In
particular, if $G=( \{ 0\},\varnothing)$ then $G^{[i,j]}$ is the
transitive tournament $[i,j]$, while $G^{[0,0]}:=G$.

For a digraph $G=(V,E)$ we define $G^\top=(V\cup \{ \top\},E\cup \{
(v,\top)\mid v\in V\})$ and $G^\bot=(V\cup \{ \bot\},E\cup \{
(\bot,v)\mid v\in V\})$ where $\top,\bot\notin V$. Observe that
$(G^{[i,j]})^\bot$ is isomorphic to $G^{[i-1,j]}$ and similarly
$(G^{[i,j]})^\top$ is isomorphic to $G^{[i,j+1]}$. 

We often refer to $G^{\bot\top}$ (or equivalently $G^{[-1,1]}$) as the
\emph{two-sided extension} of~$G$.

Let $G = (V,E)$ be a digraph. For a vertex $x\in V$ we
define $x^+ = \{v\in V\mid E(x,v)\}$ and $x^- = \{v\in V\mid
E(v,x)\}$. A vertex $a$ is a \emph{source} if $a^-$ is empty, and is a \emph{sink} if $a^+$ is empty.
A vertex $a\in V$ is said to be a \textit{dominating} vertex (or a \emph{total source}) if $a^+= V\setminus \{a\}$.  The dual notion is that of a \emph{dominated vertex} or \emph{total sink}.

In this section we give some basic observations relating to the computational complexity of CSPs over digraphs formed by taking one-point extensions of digraphs and by taking direct products and disjoint unions.

\begin{proposition}\label{first-order-equivalent}
  Let $G$ be a digraph. Then $\CSP(G^\bot)$ and $\CSP(G^\top)$ are
  equivalent to $\CSP(G)$ under first order reductions.
\end{proposition}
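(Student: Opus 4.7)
The plan is to handle both equivalences simultaneously by focusing on $\CSP(G^\top) \equiv_\fo \CSP(G)$ and invoking arrow reversal for the $\bot$-case. In each direction I will exhibit an explicit construction $H \mapsto H'$ and verify that $H' \to (\text{target})$ if and only if $H \to (\text{source})$. The pivotal observation driving both reductions is that $\top_G$ is a sink of $G^\top$: it has no outgoing edges, so in any homomorphism with codomain $G^\top$, a vertex of the domain can only be sent to $\top_G$ if it itself has no outgoing edge in the domain.

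For $\CSP(G) \leq_\fo \CSP(G^\top)$ I would take the construction $H \mapsto H^\top$, which adjoins a new vertex $\top_H$ together with all edges $v \to \top_H$. A homomorphism $f : H \to G$ extends by sending $\top_H \mapsto \top_G$, using that $\top_G$ is a total sink. Conversely, given any homomorphism $f : H^\top \to G^\top$, the edge $v \to \top_H$ (present for every $v \in V(H)$) forces $f(v)$ to have an outgoing edge in $G^\top$; since $\top_G$ has none, we get $f(V(H)) \subseteq V(G)$, and the restriction $f|_{V(H)}$ is a homomorphism $H \to G$.

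For $\CSP(G^\top) \leq_\fo \CSP(G)$ I would use the construction sending $H$ to the subdigraph $H' := H \setminus S$, where $S$ is the set of sinks of $H$. Given $g : H' \to G$, extend to a homomorphism $H \to G^\top$ by sending every element of $S$ to $\top_G$: no edge of $H$ originates in $S$, and every edge of $H$ terminating in $S$ is absorbed by the fact that $\top_G$ is a total sink. Conversely, given $f : H \to G^\top$, the key observation shows that no non-sink of $H$ can be sent to $\top_G$, so $f|_{V(H) \setminus S}$ factors through $V(G)$ and gives a homomorphism $H' \to G$.

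What remains is to verify that both constructions are first order reductions in the precise sense of the preliminaries. Removing sinks is a one-dimensional reduction with universe formula $\phi(x) = \exists y \, E(x,y)$ and unchanged edge relation. Adding a total sink requires a two-dimensional reduction: encode original vertices by diagonal pairs $(x,x)$ and reserve a distinguished tuple, such as $(\min, \min')$ picked out using $<$, to play the role of $\top_H$, defining the new edge relation by cases in the obvious way; the degenerate cases $|V(H)| \leq 1$ are decided directly. I expect this encoding is the only technical obstacle, and a routine one, since the combinatorial content of the proposition is exhausted by the sink observation in the opening paragraph. The corresponding statements for $G^\bot$ follow by reversing every edge throughout the preceding arguments.
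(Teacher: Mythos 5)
Your proposal is correct and follows essentially the same route as the paper: one direction deletes the sinks of the instance (a unary first order reduction via $\phi(x)=\exists y\, E(x,y)$), and the other adjoins a total sink to the instance, encoded on pairs with a distinguished off-diagonal pair obtained from the linear order or from parameters. The key observation you isolate --- that a vertex with an out-edge cannot map to the added total sink --- is exactly what makes both equivalences work in the paper's argument as well.
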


\begin{proof}
  It suffices to prove the result for $G^\top$. Let $H=(V,E)$ be an
  instance of $\CSP(G^\top)$. Let $S$ be the collection of sinks in
  $H$. Let $H'$ be the induced subdigraph of $H$ on the set
  $V\setminus S$. Clearly $H'$ maps homomorphically to $G$ if and
  only if $H$ maps homomorphically to $G^\top$. Observe that
  $V\setminus S=\{ v\in V \mid (\exists u \in V) (v,u)\in E\}$ and
  therefore $\CSP(G^\top)$ is first order reducible to $\CSP(G)$ (a unary first order reduction, with no parameters required).

  Let $H=(V,E)$ be an instance of $\CSP(G)$. Define $H^\top=(V^\top,E^\top)$ as follows:
  \[ 
\text{ For some distinct $a,b\in
    V$ let }
  V^\top=\{ (v,v) \mid v\in V\} \cup \{ (a,b)\}, 
\]
\[
  E^\top=\{ ((u,u),(v,v))\mid (u,v)\in E\} \cup \{ ((v,v),(a,b))\mid
  v\in V \}.
\]
These sets are first order definable in $H=(V,E)$ together with a
linear order on $V$ (let $a$ be the $<$-smallest element and $b$ the $<$-largest), or by letting $a$ and $b$ be the two distinct parameters. Note that $H$ maps homomorphically to $G$ if and
only if $H^\top$ maps homomorphically to $G$, completing the proof.
\end{proof}

We now observe some routine facts about CSPs and complexity.  They are either trivial or folklore, but we give details for completeness.  These facts also hold for general relational structures, though we phrase them here for digraphs.
\begin{lem}\label{lem:CSPplus}
Let $G=(V_1,E_1)$ and $H=(V_2,E_2)$ be a pair of digraphs.
\begin{enumerate}
\item $\CSP(G\times H)=\CSP(G)\cap\CSP(H)$.
\item $\mathbb{K}\in \CSP(G\dotcup H)$ if and only if each component of $\mathbb{K}$ is in at least one of $ \CSP(G)$ or $\CSP(H)$.
\item $\mathbb{K}\in \CSP(G\structcup H)$ if and only if no component of $\mathbb{K}$ has both a $V_1$-related and an $V_2$-related vertex, and each component of $\mathbb{K}$ with neither $V_1$- nor $V_2$-related vertices is in at least one of $ \CSP(G)$ or $\CSP(H)$, each component with a $V_1$-related vertex is in $\CSP(G)$ and each component with an $V_2$-related vertex is in $\CSP(H)$.
\item The problems $\CSP(G)$ and $\CSP(H)$ are first-order reducible to $\CSP(G\structcup H)$.
\end{enumerate}
\end{lem}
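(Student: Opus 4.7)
The plan is to handle the four parts in order, since each is essentially a matter of unpacking definitions and invoking the universal property of the corresponding construction.

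For part (1), I would use the universal property of the direct product. A homomorphism $f\colon \mathbb{K}\to G\times H$ is equivalent to a pair of homomorphisms $(\pi_1\circ f, \pi_2\circ f)$ into $G$ and $H$ respectively, since the edge relation of $G\times H$ is defined pointwise. Conversely, given homomorphisms $g\colon\mathbb{K}\to G$ and $h\colon\mathbb{K}\to H$, the map $v\mapsto (g(v),h(v))$ is a homomorphism into $G\times H$ because if $(u,v)$ is an edge of $\mathbb{K}$ then both $(g(u),g(v))$ and $(h(u),h(v))$ are edges in $G$ and $H$ respectively. So $\mathbb{K}\in\CSP(G\times H)$ exactly when $\mathbb{K}$ admits homomorphisms into both factors.

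For parts (2) and (3), the key observation is that, in a disjoint union (structured or not), no edge of $G\dotcup H$ (respectively $G\structcup H$) crosses from $V_1$ to $V_2$ or vice versa. Hence the preimage of each side under any homomorphism $f\colon\mathbb{K}\to G\dotcup H$ is closed under taking neighbours in $\mathbb{K}$, so each weakly connected component of $\mathbb{K}$ is entirely mapped into one side. This gives (2) immediately: the restriction of $f$ to a component is a homomorphism into either $G$ or $H$, and conversely, if each component has such a homomorphism, then we can assemble these into a single map into $G\dotcup H$. For (3), the additional unary predicates $u_A$ and $u_B$ force any vertex in $u_A^\mathbb{K}$ to be sent into $V_1$ and any vertex in $u_B^\mathbb{K}$ into $V_2$; combining this with the component argument yields all of the stated conditions, and conversely an assignment satisfying the stated conditions extends to a homomorphism by choosing, for each component with no unary-predicate witness, whichever side admits a homomorphism.

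For part (4), I would provide an explicit first order reduction from $\CSP(G)$ to $\CSP(G\structcup H)$ (the argument for $H$ is symmetric). Given an input digraph $\mathbb{K}$ for $\CSP(G)$, define the $\{E, u_A, u_B\}$-structure $\mathbb{K}'$ on the same universe by keeping the edge relation $E^{\mathbb{K}}$, setting $u_A^{\mathbb{K}'}$ equal to the whole universe and $u_B^{\mathbb{K}'}=\emptyset$. This is a $1$-ary first order reduction with no parameters. By part (3), $\mathbb{K}'\to G\structcup H$ forces every vertex to land in $V_1$, so $\mathbb{K}'\in\CSP(G\structcup H)$ if and only if $\mathbb{K}\in\CSP(G)$.

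The only mildly subtle step is the converse direction of part (3): one must verify that the side-assignment forced by the unary predicates on some components is consistent with a homomorphism existing for the remaining (predicate-free) components, but this is precisely the content of the stated condition and introduces no real obstruction.
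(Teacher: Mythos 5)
Your proposal is correct and follows essentially the same route as the paper: part (1) via the universal property of the product, parts (2) and (3) via the observation that components must map wholly into one side (which the paper declares trivial and omits), and part (4) via the same reduction of marking every vertex with $u_A$. You have simply supplied details that the paper leaves to the reader.
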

\begin{proof}
For the first item, using projections, $G\times H$ maps homomorphically to $G$ and to $H$ so an instance $K$ maps homomorphically to $G\times H$
  then it maps homomorphically to $G$ and $H$.  Conversely, let
  $\varphi_1:K\rightarrow G$ and $\varphi_2:K\rightarrow H$ be
  homomorphisms. Then $\varphi:K\rightarrow G\times H:v\mapsto
  (\varphi_1(v),\varphi_2(v))$ is a homomorphism from $K$ into $G\times H$ as required.  
  
  The second and third cases are essentially trivial and we omit the proofs.  The fourth item is also essentially trivial.  For an instance of $\CSP(G)$, simply add the constraint that all vertices are $G$-related to produce an equivalent instance of $\CSP(G\structcup H)$.
\end{proof}
Item 4 is not in general true if $\structcup$ is replaced by $\dotcup$ or by $\times$.  For instance, for general relational structures, the direct product of two \NP-complete structures may even produce a trivial CSP. \ The following result can easily be extended to general relational structures (using obvious notions of weakly connected component in the proof).

\begin{lemma}\label{poly-constructions}
  Let $G$ and $H$ be digraphs. If $\CSP(G)$ and $\CSP(H)$ both lie in one of the classes $\Poly$, $\NL$, $\Ll$ then  $\CSP(G\times H)$,
  $\CSP(G\dotcup H)$ and   $\CSP(G\structcup H)$ lie in this same class.  \end{lemma}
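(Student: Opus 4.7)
The plan is to derive each of the three claims directly from the set-theoretic characterisations of Lemma \ref{lem:CSPplus}, combined with closure properties of $\Poly$, $\NL$ and $\Ll$ under simple compositions. The product case is immediate: Lemma \ref{lem:CSPplus}(1) gives $\CSP(G\times H)=\CSP(G)\cap\CSP(H)$, so the algorithm runs the two given procedures on the input and conjoins their outputs, and each of the three classes is closed under this operation.

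For the disjoint union, Lemma \ref{lem:CSPplus}(2) reduces membership in $\CSP(G\dotcup H)$ to the condition that every weakly connected component $C$ of the input $\mathbb{K}$ lies in $\CSP(G)\cup\CSP(H)$. The algorithmic template is to iterate through the vertices of $\mathbb{K}$ and, for each vertex $v$ that is minimal in its own weakly connected component under a fixed order, extract the induced substructure on that component and feed it to the given $\CSP(G)$- and $\CSP(H)$-procedures, rejecting if both fail. Weakly connected components are computed by reachability in the symmetric closure; this is in $\Ll$ by Reingold's theorem, and hence in $\NL$ and $\Poly$. The structured union $G\structcup H$ is handled analogously, using Lemma \ref{lem:CSPplus}(3): a short logspace scan determines, for each component, whether it contains $u_A$- or $u_B$-marked vertices, and components are then dispatched to the appropriate procedure, with immediate rejection of any component carrying both marks.

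The only slightly delicate point is the $\Ll$ case of the two union constructions, where one must extract each component as a logspace-computable subinstance and then invoke the given $\CSP(G)$- and $\CSP(H)$-procedures on polynomially many such subinstances within logarithmic workspace. Both ingredients are standard: Reingold's theorem supplies the component computation, and $\Ll$ is closed under polynomially many sequential calls to $\Ll$-computable predicates via the usual workspace reuse between calls (so that $\Ll^{\Ll}=\Ll$). With these tools in hand, the proof reduces to assembling the pieces and verifying correctness from the componentwise characterisations already in Lemma \ref{lem:CSPplus}.
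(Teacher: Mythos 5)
Your proposal is correct and follows essentially the same route as the paper: the product case via closure under intersection, and the two union cases by extracting weakly connected components in logspace (undirected reachability) and dispatching each component to the given $\CSP(G)$- and $\CSP(H)$-procedures, with extra bookkeeping for the unary marks in the structured union. The paper's proof is only slightly terser, composing the component computation directly with the two machines rather than spelling out the iteration over component-minimal vertices and the $\Ll^{\Ll}=\Ll$ workspace reuse, but the substance is identical.
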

\begin{proof}
Each of the classes $\Poly$, $\Ll$ and $\NL$ are closed under taking intersections of languages, so if both $\CSP(G)$ and $\CSP(H)$ are contained within one of these complexity classes, then so is $\CSP(G\times H)$ by Lemma~\ref{lem:CSPplus} part~1.

If $\CSP(G)$ and $\CSP(H)$ are in $\Poly$, then Lemma \ref{lem:CSPplus} easily shows that $\CSP(G\dotcup H)$ and $\CSP(G\structcup H)$ are too.  In the case of $\NL$ and $\Ll$ slightly more care must be taken.  We consider $\CSP(G\dotcup H)$ first.

Given any directed graph $K=(V,E)$ and vertex $u\in V$, there is a logspace computation that outputs the vertices of the weakly connected component of $K$ containing $u$: simply treat edges as if they were undirected, and then use the logspace solvability of undirected graph reachability to test which vertices can be reached from $u$ by an oriented path.  

Now if $S$ and $T$ are logspace (or nondeterministic logspace) Turing machine programs for $\CSP(G)$ and $\CSP(H)$ respectively, then for each vertex $u\in V$, we compose the above construction of the component containing $u$ with $S$ and then $T$, looking for acceptance in at least one case.  

The case of $G\structcup H$ is basically the same argument except that there are small checks required to test for the technical conditions relating to no component having both $G$- and $H$-related points and so on.  
\end{proof}

Here are analogous statements for the complexity classes $\Mod_k\Ll$.
\begin{lem}\label{lem:modp}
 Let $G$ and $H$ be digraphs with $\CSP(G)\in \Mod_m\Ll$ and $\CSP(H)\in \Mod_n\Ll$.  If $n=m$ is a prime then $\CSP(G\dotcup H)$, $\CSP(G\structcup H)$ and $\CSP(G\times H)$ are in $\Mod_{n}\Ll$.   In general $\CSP(G\dotcup H)$, $\CSP(G\structcup H)$ are solvable in $\Mod_{nm}\Ll$.
 \end{lem}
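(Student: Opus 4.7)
The argument follows the template of Lemma \ref{poly-constructions}, with the Boolean closure properties of the $\Mod_k\Ll$ classes (listed in the bullets preceding the lemma statement) taking the place of those of $\Ll$ and $\NL$. By Lemma \ref{lem:CSPplus}, each of $\CSP(G\times H)$, $\CSP(G\dotcup H)$, $\CSP(G\structcup H)$ is a Boolean combination of instances of $\CSP(G)$ and $\CSP(H)$ applied to the weakly connected components of the input, and these components are enumerable in logspace (exactly as in Lemma \ref{poly-constructions}).

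In the prime case $m=n=p$ the three claims each follow by chaining these closure properties. For $\CSP(G\times H)=\CSP(G)\cap\CSP(H)$ we use closure of $\Mod_p\Ll$ under intersection. For the disjoint and structured unions, we first observe that $L:=\CSP(G)\cup\CSP(H)\in\Mod_p\Ll$ by closure under unions. An instance $K$ lies in $\CSP(G\dotcup H)$ exactly when every weakly connected component of $K$ lies in $L$; iterating the oracle for $L$ over the (logspace-enumerable) components gives a logspace Turing reduction to $L$, placing $\CSP(G\dotcup H)$ inside $\Ll^{\Mod_p\Ll}=\Mod_p\Ll$ via the closure of $\Mod_p\Ll$ under logspace Turing reductions. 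For $\CSP(G\structcup H)$ one performs the additional logspace bookkeeping dictated by Lemma \ref{lem:CSPplus}(3) (checking that no component mixes $u_A$- and $u_B$-related vertices, and routing components with a $u_A$-related vertex to $\CSP(G)$ and components with a $u_B$-related vertex to $\CSP(H)$) before invoking the oracle.

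For the general case, the first two facts still go through: by monotonicity $\Mod_k\Ll\subseteq\Mod_{mn}\Ll$ whenever $k\mid mn$, so $\CSP(G),\CSP(H)\in\Mod_{mn}\Ll$, and hence $L\in\Mod_{mn}\Ll$ by closure under unions. The main obstacle, and where the argument will require the most care, is that $\Mod_{mn}\Ll$ is not known to be closed under logspace Turing reductions when $mn$ is composite, so the oracle-style argument used in the prime case does not transfer. The plan is to construct a nondeterministic logspace machine for $\CSP(G\dotcup H)$ directly: processing components $C_i$ of $K$ in sequence, the machine nondeterministically decides to test $C_i$ against $G$ or against $H$ and simulates an amplified version of the hypothesised machine $M_G$ (respectively $M_H$), inflated by a factor of $n$ (respectively $m$), so that the per-component accepting count is nonzero modulo $mn$ exactly when $C_i\in L$. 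Combining these per-component counts into a correct global mod-$mn$ count is complicated by the presence of zero divisors in $\mathbb{Z}_{mn}$; the cleanest way around this is to first reduce to a squarefree modulus using the identity $\Mod_k\Ll=\Mod_{\mathrm{rad}(k)}\Ll$ from \cite{BDHM} and then verify the required per-prime behaviour. The structured-union case follows by the same construction, prepended with the logspace checks of the unary predicates $u_A$ and $u_B$.
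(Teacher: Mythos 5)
Your treatment of the prime case is correct and is exactly the paper's argument: Lemma~\ref{lem:CSPplus} turns the product into an intersection (handled by closure of $\Mod_p\Ll$ under intersection), and the two union constructions are handled by enumerating weakly connected components in logspace and querying $\CSP(G)\cup\CSP(H)\in\Mod_p\Ll$ as an oracle, using $\Ll^{\Mod_p\Ll}=\Mod_p\Ll$. You have also correctly noticed something the paper's one-line proof passes over: for composite $mn$ the quoted closure properties (logspace Turing reductions, intersections) are only stated for prime moduli, so the oracle argument does not transfer and the general case is not ``immediate'' in the way the paper's wording suggests.

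However, your replacement construction for the general case has a genuine gap. Chaining the per-component machines multiplies the accepting-path counts, so the global count is $\prod_i(na_i+mb_i)$, where $a_i,b_i$ are the counts of $M_G,M_H$ on the $i$th component. Each factor is $\equiv 0\pmod{mn}$ exactly when that component is a NO-instance of $\CSP(G)\cup\CSP(H)$ --- and even this already requires $m$ and $n$ to be coprime --- but the product can vanish modulo $mn$ when every factor is nonzero: two components with counts $m$ and $n$ respectively (both YES-instances) give product $mn\equiv 0$, so the machine wrongly rejects. Passing to the radical via $\Mod_k\Ll=\Mod_{\mathrm{rad}(k)}\Ll$ does not repair this, since a squarefree composite modulus still has zero divisors; the ``verify the required per-prime behaviour'' step is precisely where the missing content lives. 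To close the argument you need a finer structural fact about $\Mod_{mn}\Ll$ than the closure properties listed before the lemma --- for instance the decomposition in \cite{BDHM} of $\Mod_k\Ll$ for squarefree $k$ in terms of the prime-modulus classes, combined with a per-prime (Chinese remainder) analysis of the component test --- rather than a single chained counting machine.
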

 \begin{proof}
These are immediate consequences of Lemma \ref{lem:CSPplus} and the closure properties for $\Mod_p\Ll$ languages: the only technicality is that because neither $\CSP(G\dotcup H)$ nor $\CSP(G\structcup H)$ correspond exactly to $\CSP(G)\cup \CSP(H)$ we cannot directly use the closure of $\Mod_n\Ll$ under unions.  However a component testing argument as in the proof of Lemma \ref{poly-constructions} places both problems in $\Ll^{\Mod_n\Ll}=\Mod_n\Ll$.
  \end{proof}

\section{Polymorphisms and simple constructions}\label{sec:unionspoly}
In this section we observe the preservation of polymorphism properties across forms of union and direct product.  We also observe a natural extension of polymorphisms from a digraph $G$ to a one-point extension $G^\top$ or $G^\bot$.  First we note the following.


\begin{lem}\label{Taylor-const}
  Let $G=(V_1,E_1)$ and $H=(V_2,E_2)$ be digraphs with weak NU polymorphisms $t_1$ and
  $t_2$ of the same arity $n$. Then 
  \[ w(x_1,x_2,\dots,x_n)=
     \begin{cases}
       t_1(x_1,x_2,\dots,x_n) \text{ if $x_1,x_2,\dots,x_n\in V_1$,}\\
       t_2(x_1,x_2,\dots,x_n) \text{ if $x_1,x_2,\dots,x_n\in V_2$,}\\
       x_i \text{ where $i$ is minimal with respect to $x_i\in
         V_1$}
     \end{cases}
     \]
     is a weak NU polymorphism on $G\mathbin{\dot\cup}H$ and $G\mathbin{\overline{\cup}}H$.
\end{lem}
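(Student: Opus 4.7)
The plan is to verify three properties of $w$ in turn: (a) idempotency, (b) the weak near-unanimity equations, and (c) the homomorphism/polymorphism property, checking both the disjoint union $G \mathbin{\dot\cup} H$ and the structured union $G \mathbin{\overline{\cup}} H$ at the same time. The overall strategy is a straightforward case split on which of $V_1, V_2$ the arguments fall into, relying on the fact that no edges of the (either) union go between $V_1$ and $V_2$ and that $t_1, t_2$ are weak NU polymorphisms on $G$ and $H$ respectively.

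First I would observe idempotency: $w(x,\dots,x) = t_1(x,\dots,x) = x$ when $x \in V_1$, and similarly using $t_2$ when $x \in V_2$ (the third case of the definition never applies to a constant tuple). Next I would check the weak NU equations $w(y,x,\dots,x) = w(x,y,x,\dots,x) = \dots = w(x,\dots,x,y)$. If $x,y$ lie in the same part $V_j$, this reduces directly to the corresponding equation for $t_j$. If $x \in V_1$ and $y \in V_2$, then in every such tuple the minimal index $i$ with $x_i \in V_1$ is the position occupied by $x$, so every term equals $x$; if instead $x \in V_2$ and $y \in V_1$, every term equals $y$ by the same reasoning.

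The main work is the polymorphism condition. Suppose $(x_1,\dots,x_n) \to (y_1,\dots,y_n)$ in $(G \mathbin{\dot\cup} H)^n$, i.e.\ $x_k \to y_k$ for each $k$. Since the union has no cross edges, $x_k$ and $y_k$ always lie in the same part. Hence if all $x_k \in V_1$ then all $y_k \in V_1$, and $w(\vec x) \to w(\vec y)$ follows from $t_1$ being a polymorphism of $G$; the analogous case with $V_2$ uses $t_2$. In the mixed case, the index set $\{k : x_k \in V_1\}$ coincides with $\{k : y_k \in V_1\}$, so the minimal index $i$ used to define $w(\vec x)$ is the same as that for $w(\vec y)$; thus $w(\vec x) = x_i \to y_i = w(\vec y)$, an edge of $G$ and therefore of the union. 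For the structured union $G \mathbin{\overline{\cup}} H$ we additionally need $w$ to preserve the unary predicates $u_A$ and $u_B$: if every $x_k \in V_1$ then $w(\vec x) = t_1(\vec x) \in V_1$, and similarly for $V_2$, which is exactly preservation of $u_A$ and $u_B$.

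I do not expect any real obstacle: the construction is designed so that the only non-trivial case is the one where the tuple straddles both components, and there the ``minimal index'' rule coordinates the outputs of $w(\vec x)$ and $w(\vec y)$ on edge-related tuples. The one point that merits explicit mention in the write-up is that the disjoint union has no edges between $V_1$ and $V_2$, so that on an edge-related pair of tuples the partition pattern of $\vec x$ equals that of $\vec y$; this is what makes the piecewise definition well-behaved under edges.
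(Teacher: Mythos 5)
Your proof is correct: the case analysis on where the arguments fall, the observation that neither $\dotcup$ nor $\structcup$ has edges between $V_1$ and $V_2$ (so edge-related tuples have the same partition pattern), and the check of the unary predicates for $\structcup$ together constitute a complete verification. The paper omits this proof as ``completely routine,'' and your write-up is precisely the routine argument intended.
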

\begin{proof}
  We omit the proof, which is completely routine.
\end{proof}

\begin{lemma}\label{n-perm-union}
  Let $G=(V_1,E_1)$ and $H=(V_2,E_2)$ be digraphs.  If both $G$ and $H$ satisfy one of the following properties then so also do $G\dotcup H$ and $G\structcup H$\up:
\begin{enumerate}
\item congruence $n$-permutability for some $n$\up;
\item congruence modularity\up;
\item the Hobby-McKenzie property.
\end{enumerate}
\end{lemma}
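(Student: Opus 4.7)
The plan is to treat all three properties uniformly, exploiting the fact that each is characterised by a system of idempotent $3$-ary polymorphisms whose defining equations involve at most three variables (see Section \ref{sec:poly}). First I would fix term systems $\{t_j^G\}$ and $\{t_j^H\}$ witnessing the given property on $G$ and on $H$ respectively, padding shorter chains with projections if necessary so that the two systems have the same length. I would then build a matching system $\{t_j^\cup\}$ on $V_1 \cup V_2$ by the piecewise rule that applies $t_j^G$ to triples in $V_1^3$, applies $t_j^H$ to triples in $V_2^3$, and applies a coordinate projection to each mixed triple, with the projection chosen as a function only of $j$ and the label pattern of the triple in $\{V_1,V_2\}^3$.

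That each $t_j^\cup$ is a polymorphism of $G\dotcup H$ and of $G\structcup H$ should require essentially no work. Since $G\dotcup H$ has no edges between $V_1$ and $V_2$, every edge of $(G\dotcup H)^3$ preserves its label pattern, and on each fixed pattern the rule is either an original term or a projection, both of which are edge-preserving. The unary predicates $u_A, u_B$ of $G\structcup H$ are preserved because homogeneous inputs are sent to the correct part by the original terms, and idempotence is inherited from the original systems.

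The substantive step is selecting the projection assignment so that the defining equations of the given system hold when the underlying variables $x, y$ lie in different parts of the union. Every equation has one of the shapes $(x,x,y)$, $(x,y,y)$, $(x,y,x)$, so only a handful of mixed patterns are involved. For congruence $n$-permutability I would set every $p_i$ $(1 \le i \le n-1)$ to project to the third coordinate on $(V_a,V_a,V_b)$-patterns, and on $(V_a,V_b,V_b)$-patterns set $p_1$ to project to the first coordinate and $p_i$ $(i \ge 2)$ to the second; direct substitution then delivers $x = p_1(x,y,y)$, $p_i(x,x,y) = y = p_{i+1}(x,y,y)$, and $p_{n-1}(x,x,y) = y$ at mixed inputs. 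For the Hobby-McKenzie system and for the Gumm system for congruence modularity the scheme is even more uniform: let every $d_i$ (respectively $s_i$) project to the first coordinate and every $e_i$ to the third on every mixed pattern, and let the interpolating term $p$ project to the first coordinate on $(V_a,V_b,V_b)$-patterns and to the third on $(V_a,V_a,V_b)$-patterns. All defining equations then reduce to trivial identities of projections on the relevant pattern.

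The only real subtlety is the coordination of the interpolating term $p$ with both the $d$-chain (via $d_n(x,y,y) = p(x,y,y)$) and the $e$-chain (via $p(x,x,y) = e_0(x,x,y)$) in the Hobby-McKenzie and Gumm systems: these two constraints demand different projections on different mixed patterns. Fortunately the patterns involved are disjoint, so $p$ can be defined consistently, and no genuine obstacle arises; the whole argument is essentially a bookkeeping exercise in the same spirit as Lemma \ref{Taylor-const}.
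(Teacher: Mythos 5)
Your proposal is correct and follows essentially the same route as the paper: extend the witnessing terms by the original polymorphisms on pure triples and by projections on mixed triples (the paper's ``minority selection'' is exactly a pattern-dependent projection, and your assignments agree with the paper's choices on every pattern that occurs in the defining equations), with the polymorphism property following from the absence of edges between $V_1$ and $V_2$. The only cosmetic difference is that the paper fixes one of three uniform extension rules per term rather than letting the projection vary freely with the label pattern, but this changes nothing of substance.
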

\begin{proof}
Let $i,j$ be such that $\{i,j\}=\{1,2\}$.  If 
\[
(a,b,c)\in (V_j\times V_i\times V_i)\cup (V_i\times V_j\times V_i)\cup (V_i\times V_i\times V_j)
\]
 then the element of $a,b,c$ that lies in $V_j$ is said to be the \emph{minority selection}.

In each case we extend existing ternary polymorphisms $p^G(x,y,z)$ on $G$ and $p^H(x,y,z)$ on $H$ to a single polymorphism $p$ on the union $V_1\cup V_2$ by using one of the following methods.  In all cases, when $\{a,b,c\}\subseteq V_1$ we define $p(a,b,c)=p^G(a,b,c)$ and when $\{a,b,c\}\subseteq V_2$ we define $p(a,b,c)=p^H(a,b,c)$.  When  $\{a,b,c\}\not\subseteq V_1$ and $\{a,b,c\}\not\subseteq V_2$ we use \emph{one} of the following cases:
\begin{enumerate}
\item $p(a,b,c):=a$;
\item $p(a,b,c):=c$;
\item $p(a,b,c)$ is the minority selection from $(a,b,c)$.
\end{enumerate}
It is trivial to verify that (regardless of which of the three choices is made) $p$ will be a polymorphism of both $G\dotcup H$ and $G\structcup H$, provided $p^G$ and $p^H$ were polymorphisms of $G$ and $H$ respectively.  We now observe how to use these extension methods to verify the three conditions in the lemma from $G$ and $H$ to $G\dotcup H$ and $G\structcup H$.

  For congruence permutability, assume that $G$ has polymorphisms witnessing congruence $m$-permutability and $H$ has polymorphisms witnessing congruence $n$-permutability.  Without loss of
  generality assume $n>m$. It is easy to see that $G$ is also
  $n$-permutable by simply extending the chain of polymorphisms by
  third projections. Let $p_1^G,\dots,p_n^G$ and $p_1^H,\dots,p_n^H$ be
  polymorphisms witnessing congruence $n$-permutability on $G$ and $H$
  respectively. 
  We define $p_1$ from $p_1^G$ and $p_1^H$ according to construction $3$ above.  For $i=2,\dots,n$ we use construction 2.  Recall that on tuples from within $V_1$, the polymorphisms $p_1,\dots,p_n$ coincide with $p_1^G,\dots,p_n^G$, while on tuples from within $V_2$, the polymorphisms $p_1,\dots,p_n$ coincide with $p_1^H,\dots,p_n^H$.  For tuples including elements from both $V_1$ and $V_2$, $p_1$ is a minority, while $p_2,\dots,p_n$ are third projections.  It follows that the required equations for congruence $n$-permutability hold.
  
  For congruence modularity, the idea is very similar.  We again assume that the length of the chain of equalities in the Gumm terms (see Subsection \ref{subsec:CM}) is the same on $G$ and $H$, by padding the start using first projections.  Then these terms are extended to $V_1\cup V_2$ by the third choice of method of definition for the term $p$ and the first choice of method for the $s_i$.
  
  For Hobby-McKenzie, we use choice 1 on the terms $d_i$, choice 2 on the terms $e_i$ and choice 3 on the term $p$.
\end{proof}
\begin{thm}\label{thm:sumclosure}
All conditions listed in Figure \ref{fig:maltsev} are preserved under the $\dotcup$ and $\structcup$ constructions\up: if any one is held by both $G$ and $H$, then it is also held by $G\dotcup H$ and $G\structcup H$.  Moreover, if $G$ or $H$ fails one some condition in Figure \ref{fig:maltsev}, then $G\structcup H$ also fails the condition.
\end{thm}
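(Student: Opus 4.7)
The plan is to prove the theorem in two halves: preservation of each Figure~\ref{fig:maltsev} condition by both $\dotcup$ and $\structcup$, and then the converse direction for $\structcup$. The preservation direction is handled node by node by invoking Lemmas~\ref{Taylor-const} and~\ref{n-perm-union} (with one exception, CD, which needs a separate argument); the converse uses the fact that the unary predicates $u_A$ and $u_B$ force any polymorphism of $G\structcup H$ to respect the partition.

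Lemma~\ref{Taylor-const} immediately yields the Taylor case. For SD$(\wedge)$ I apply Lemma~\ref{Taylor-const} separately to the $3$-ary and $4$-ary weak NU polymorphisms in the Kozik characterisation: the link $w_1(y,x,x) = w_2(y,x,x,x)$ survives on mixed arguments because, under the minimal-$V_1$-index rule of Lemma~\ref{Taylor-const}, both sides reduce to $y$ when $y \in V_1$ and $x \in V_2$, and both reduce to $x$ when $y \in V_2$ and $x \in V_1$. Lemma~\ref{n-perm-union} directly supplies Hobby-McKenzie, CM and C$n$P, and hence Maltsev ($=$C2P) and C3P as special cases. Every intersection node in the figure---SD$(\vee)$, the various ``and C$n$P'' nodes, CD and C$n$P, CD and C3P, and arithmeticity A---is obtained by running the relevant extensions in parallel; the polymorphism families produced by the two lemmas are independent of one another and coexist on $G \dotcup H$ and $G \structcup H$.

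The only node not covered by these lemmas is CD, and I consider this the main obstacle. A naive coordinate-projection extension of J\'onsson terms $J_0,\dots,J_n$ fails: the boundary requirements $J_0 = \pi_1$ and $J_n = \pi_3$, combined with the chained identities $J_i(x,x,y) = J_{i+1}(x,x,y)$ for even $i$ and $J_i(x,y,y) = J_{i+1}(x,y,y)$ for odd $i$, force all intermediate $J_i$ to pick the same projection on mixed tuples, which is inconsistent. I sidestep this by invoking Barto's theorem \cite{bar:NU}, cited in the article, that CD is equivalent to the existence of a near-unanimity polymorphism for finite relational structures of finite signature. Fix an NU polymorphism $n$ of odd arity $k \geq 3$ on each of $G$ and $H$ (padding with projections to match arities if need be), and extend $n$ to any mixed tuple $(x_1,\dots,x_k)$ by returning the first coordinate whose side---$V_1$ or $V_2$---holds the strict majority; odd arity guarantees a strict majority exists. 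The NU identities $n(x,\dots,x,y,x,\dots,x) = x$ are preserved because $y$ is then the lone minority-sided coordinate, so the output is one of the $x$'s. Polymorphism status is automatic: every coordinate of a product edge in $(G \dotcup H)^k$ stays within a single side, so the source and target tuples share their side-pattern and the rule selects matching coordinates at each end.

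For the converse direction, let $p$ be any $n$-ary polymorphism of $G \structcup H$. Since $u_A$ is a unary relation in the signature, $p$ maps $V_1^n$ into $V_1$, and symmetrically $V_2^n$ into $V_2$, so $p$ restricts to a polymorphism of $G$ and, independently, to a polymorphism of $H$. Each Figure~\ref{fig:maltsev} condition is defined by universally quantified term identities among polymorphisms, and such identities descend directly to restrictions. Hence any Figure~\ref{fig:maltsev} property of $G \structcup H$ is inherited by both $G$ and $H$, which is the contrapositive of the moreover clause.
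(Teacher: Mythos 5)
Your overall skeleton matches the paper's: preservation is reduced node by node to Lemmas~\ref{Taylor-const} and~\ref{n-perm-union} plus the observation that intersection nodes follow by running the constructions in parallel, and the ``moreover'' clause is exactly the paper's argument that a polymorphism of $G\structcup H$ restricts (via the unary predicates) to polymorphisms of $G$ and of $H$, so the defining identities descend. Where you genuinely diverge is at the CD node. The paper needs no separate argument there: it uses the standard fact that congruence distributivity is the conjunction of congruence modularity and $\SD(\wedge)$, so CD is already covered by the CM and $\SD(\wedge)$ cases. You instead pass through Barto's theorem \cite{bar:NU} and build an NU polymorphism on the union directly by the ``majority side, first coordinate on that side'' rule. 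Your construction is correct: padding by dummy coordinates does preserve the NU identities (unlike weak NU), edges of $G\dotcup H$ and $G\structcup H$ never cross sides so the side-pattern of a tuple is preserved along product edges, and the lone off-diagonal coordinate in an NU evaluation can never be on the majority side when $k\geq 3$. This buys a self-contained, explicit polymorphism at the cost of invoking \cite{bar:NU} twice (once to get NUs from CD on the factors, once implicitly to get back from NU to CD); the paper's route buys brevity at the cost of a commutator-theoretic fact it leaves implicit.

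There is one genuine gap: the Taylor case is \emph{not} immediate from Lemma~\ref{Taylor-const}, because that lemma requires weak NU polymorphisms of $G$ and $H$ \emph{of the same arity}, and the Taylor property only guarantees that each digraph has a weak NU of \emph{some} arity. Unlike genuine NU terms, weak NUs cannot be padded with dummy variables (if $w'(x_1,\dots,x_n,x_{n+1}):=w(x_1,\dots,x_n)$ then $w'(x,\dots,x,y)=x$ while $w'(y,x,\dots,x)=w(y,x,\dots,x)$ need not equal $x$, so the weak NU chain of equalities breaks). The paper closes exactly this hole by citing Barto and Kozik \cite{b-k3}: both $G$ and $H$ have weak NU polymorphisms of every prime arity exceeding their vertex counts, so a common arity can be chosen before applying Lemma~\ref{Taylor-const}. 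Your proof needs this (or some equivalent arity-alignment step) to be complete.
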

\begin{proof}
The final claim in the theorem holds because polymorphisms of $G\structcup H$ must preserve $G$ and also $H$.  So $G\structcup H$ cannot have have stronger polymorphism properties than $G$ or $H$.

For preservation, note that every condition in Figure \ref{fig:maltsev} is formed by the simultaneous satisfaction of some combination of Taylor, $\SD(\wedge)$, Hobby-McKenzie, congruence $n$-permutability, congruence modularity (in some cases for particular $n$: for example, the Maltsev property is just congruence $2$-permutability).  Thus it suffices to verify that these are preserved.  All except the Taylor and $\SD(\wedge)$ are covered directly by Lemma \ref{n-perm-union}.  

For the $\SD(\wedge)$ property, use the fact that $G$ and $H$ both have $3$-ary and $4$-ary weak NU polymorphisms $w_1$ and $w_2$, and that the construction in Lemma \ref{Taylor-const} preserves the required connecting equations $w_1(y,x,x)=w_2(y,x,x,x)$.

 For the Taylor property, we use a result of Barto and Kozik~\cite{b-k3}, which shows that both $G$ and $H$
   have weak NU polymorphisms of every arity $p$ where $p$
  is a prime greater than the size of the vertex set. So we may choose some
  prime $p$ larger than $|V_1|$ and $|V_2|$ and let $t_1$ and $t_2$ be
  the corresponding weak NU polymorphisms of arity $p$ on $G$ and $H$
  respectively.   Then the construction in Lemma \ref{Taylor-const} gives the desired result.
\end{proof}
\begin{thm}\label{thm:productclosure}
For any of the polymorphism properties $P$ described in Figure \ref{fig:maltsev}, if $G$ and $H$ have $P$ then so does $G\times H$.
\end{thm}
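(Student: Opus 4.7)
The plan is to exploit the fact that every property in Figure \ref{fig:maltsev} is characterised by the existence of a system of idempotent polymorphisms satisfying certain universally quantified equations (term conditions), and that such equational conditions are preserved by coordinatewise direct products.

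First I would make the general observation underlying everything: if $G=(V_1,E_1)$ has an $n$-ary polymorphism $f$ and $H=(V_2,E_2)$ has an $n$-ary polymorphism $g$, then the \emph{coordinatewise product}
\[
(f\times g)\bigl((a_1,b_1),\ldots,(a_n,b_n)\bigr):=\bigl(f(a_1,\ldots,a_n),\,g(b_1,\ldots,b_n)\bigr)
\]
is an $n$-ary polymorphism of $G\times H$, and that idempotence is preserved. Moreover, if $f_1,\ldots,f_r$ are polymorphisms of $G$ and $g_1,\ldots,g_r$ are polymorphisms of $H$ (all of matching arities), and if they jointly satisfy an equation of the form $t(f_{i_1},\ldots)\approx t'(f_{j_1},\ldots)$ in each coordinate, then the corresponding coordinatewise products $f_1\times g_1,\ldots, f_r\times g_r$ satisfy the same equation on $G\times H$. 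Thus any property of the polymorphism algebra that can be expressed as the satisfaction of a finite set of equations in a finite set of idempotent operations of matching arities is automatically inherited by $G\times H$.

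Next I would go through the properties individually to confirm that one can match arities between $G$ and $H$. For congruence $n$-permutability, congruence modularity, and the Hobby-McKenzie property, the characterising systems have a variable length (the chain length $n$); if $G$ witnesses chain length $m$ and $H$ witnesses chain length $n$ with $m<n$, one can pad the shorter chain on $G$ by repeating third (or first) projections, exactly as in the proof of Lemma~\ref{n-perm-union}. For congruence distributivity (J\'onsson terms) the same padding trick works. For $\SD(\wedge)$ the characterising terms are a $3$-ary and a $4$-ary weak NU linked by $w_1(y,x,x)=w_2(y,x,x,x)$; these arities are already fixed. For the Taylor property, the issue is that $G$ and $H$ may a priori admit weak NU polymorphisms of different arities, so one invokes the theorem of Barto and Kozik \cite{b-k3} (already used in Theorem~\ref{thm:sumclosure}) to produce weak NU polymorphisms on both $G$ and $H$ of every sufficiently large prime arity $p$; choose any prime $p$ exceeding $\max(|V_1|,|V_2|)$ and take the coordinatewise product of the resulting $p$-ary weak NUs.

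Finally, every labelled node in Figure \ref{fig:maltsev} is either one of the basic conditions just discussed, or the conjunction of two such conditions (intersections in the diagram). In the latter case one simply unions the two systems of matched polymorphisms and takes coordinatewise products of all of them. I expect no real obstacle here: the only nontrivial point is the arity alignment for the Taylor case, which is handled cleanly by the Barto-Kozik theorem exactly as in Theorem~\ref{thm:sumclosure}; all remaining verifications reduce to the trivial fact that an equation holds coordinatewise if and only if it holds in each coordinate.
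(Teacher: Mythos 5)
Your proposal is correct and follows essentially the same route as the paper's own (sketched) proof: coordinatewise products $s\times t$ of polymorphisms, padding chains with projections to align lengths, and invoking the Barto--Kozik result to obtain weak NU polymorphisms of matching arity for the Taylor case. The extra detail you supply (explicit arity alignment for each condition and handling of conjunctions of nodes) is just a fleshing-out of what the paper leaves as ``virtually trivial.''
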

\begin{proof}
This is virtually trivial and we give only a sketch.
If $s$ is a polymorphism on $G$ and $t$ is a polymorphism on $H$, then $s\times t$ (defined as $s$ on the first coordinate and $t$ on the second) is obviously a polymorphism on $G\times H$.  Almost all of the conditions in Figure \ref{fig:maltsev} involve ternary polymorphisms, and the systems of equations can be extended arbitrarily in length using projections.  So, systems of equations witnessing property $P$ on $G$ and property $P$ on $H$ can be combined by $\times$ to witness property $P$ on $G\times H$.  For the case of Taylor, we must find  weak NU polymorphisms of the same arity, which as in the proof of Theorem \ref{thm:sumclosure}, is guaranteed by \cite{b-k3}.  The case of $\SD(\wedge)$ is determined (in our presentation) by polymorphisms of matching arities, so again translates simply by applying $\times$.
\end{proof}
This result also applies to the 2-semilattice property and the property of admitting totally symmetric polymorphisms of all arities.

The following useful lemma is a special case of polymorphisms
preserving primitive positive definable sets and is easy to prove.
\begin{lemma}\label{subalgebra}
  Let $G=(V,E)$ be a digraph with $n$-ary idempotent polymorphism
  $p$. Then for all $v\in V$, the sets $v^+$ and $v^-$ are closed under $p$\up: application of $p$ to any tuple from $v^+$ \up(or $v^-$\up) returns a vertex in $v^+$ \up(or $v^-$\up, respectively\up).
\end{lemma}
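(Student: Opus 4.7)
The plan is to unpack the definition of a polymorphism (a homomorphism $p\colon G^n\to G$) together with the idempotence assumption to verify the closure property directly. The whole argument is a single application of the edge-preservation condition to carefully chosen tuples, so there is no real obstacle to overcome; the main task is to set things up so that the idempotence hypothesis $p(v,v,\dots,v)=v$ cleanly produces the desired edge.

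First I would treat $v^+$. Pick an arbitrary tuple $u_1,\dots,u_n\in v^+$, so that by definition $E(v,u_i)$ holds for each $i\in\{1,\dots,n\}$. Form the $n$ edges $(v,u_1),\dots,(v,u_n)$ in $G$ and view them coordinatewise as two tuples of length $n$: the constant tuple $(v,v,\dots,v)$ in the first coordinate, and $(u_1,\dots,u_n)$ in the second. Because $p$ is a polymorphism of $G$, applying $p$ coordinatewise to these $n$ edges yields an edge of $G$, namely
\[
\bigl(p(v,v,\dots,v),\,p(u_1,u_2,\dots,u_n)\bigr)\in E.
\]
Idempotence of $p$ collapses the first coordinate to $v$, so $E(v,p(u_1,\dots,u_n))$ holds, which is exactly the statement that $p(u_1,\dots,u_n)\in v^+$.

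The case of $v^-$ is completely symmetric: taking $u_1,\dots,u_n\in v^-$, we have the $n$ edges $(u_i,v)$, and applying $p$ coordinatewise yields $(p(u_1,\dots,u_n),p(v,\dots,v))\in E$, which by idempotence is $(p(u_1,\dots,u_n),v)\in E$, placing $p(u_1,\dots,u_n)$ in $v^-$. Since the elements of $v^+$ and $v^-$ were arbitrary, both sets are closed under $p$, completing the proof.
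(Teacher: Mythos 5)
Your proof is correct and is exactly the routine argument the paper has in mind: the paper omits the proof, remarking only that the lemma is a special case of polymorphisms preserving primitive positive definable sets, and your direct verification (apply edge-preservation to the $n$ edges $(v,u_i)$ viewed as one edge of $G^n$, then use idempotence to collapse the constant coordinate) is precisely that special case spelled out.
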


\begin{defn}\label{defn:top}
Let $G=(V,E)$ be a digraph and $p$ a polymorphism of arity $n$ on $G$.  We let $p^\top$ denote the operation on $G^\top$ defined by 
\[
p^\top(a_1,a_2,\dots,a_n)=
\begin{cases}
  p(a_1,a_2,\dots,a_n) \text{ if $a_1,\dots,a_n\in V$}\\
 \top \text{ otherwise.}
\end{cases}
\]
The operation $p^\bot$ is defined similarly on $G^\bot$.
\end{defn}
We will write  $p^{\bot\top}$ to denote $(p^\bot)^\top$ and so on.
The following lemma has trivial proof.
\begin{lem}\label{lem:polyplus}
If $p$ is a polymorphism of $G$ then $p^\top$ is a polymorphism of $G^\top$ and $p^\bot$ is a polymorphism of $G^\bot$.
\end{lem}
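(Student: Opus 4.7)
The plan is to verify the polymorphism condition directly from the definitions, using a brief case split. I will treat $p^\top$ in detail; the argument for $p^\bot$ is dual.

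First I would unpack what must be shown: for $p^\top$ of arity $n$ to be a polymorphism of $G^\top$, given any $n$ edges $(a_1,b_1),\dots,(a_n,b_n)$ of $G^\top$, we need $(p^\top(a_1,\dots,a_n),p^\top(b_1,\dots,b_n))$ to be an edge of $G^\top$. Recall that the edges of $G^\top$ are the edges of $E$ together with the pairs $(v,\top)$ for $v\in V$. The key simple observation is that $\top$ is a total sink: there is no edge originating at $\top$. Consequently, for every edge $(a_i,b_i)$ of $G^\top$ we must have $a_i\in V$, while $b_i\in V\cup\{\top\}$. In particular, $p^\top(a_1,\dots,a_n)=p(a_1,\dots,a_n)\in V$ by Definition~\ref{defn:top}.

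Next I would split on whether any $b_i$ equals $\top$. If all $b_i\in V$, then every $a_i,b_i$ lies in $V$ and $(a_i,b_i)\in E$, so applying the polymorphism property of $p$ on $G$ gives $(p(a_1,\dots,a_n),p(b_1,\dots,b_n))\in E\subseteq E^\top$; by the definition of $p^\top$ this is precisely $(p^\top(a_1,\dots,a_n),p^\top(b_1,\dots,b_n))$. If instead some $b_i=\top$, then $p^\top(b_1,\dots,b_n)=\top$ by definition, and since $p^\top(a_1,\dots,a_n)\in V$ from the previous paragraph, the pair $(p^\top(a_1,\dots,a_n),\top)$ is an edge of $G^\top$ by the construction of $G^\top$. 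In either case the required edge lies in $G^\top$.

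The argument for $p^\bot$ is entirely symmetric: $\bot$ is a total source with no incoming edges, so in any edge $(a_i,b_i)$ of $G^\bot$ we have $b_i\in V$, hence $p^\bot(b_1,\dots,b_n)=p(b_1,\dots,b_n)\in V$. A parallel case split on whether some $a_i$ equals $\bot$ finishes the verification. There is no genuine obstacle here; the only thing to be careful about is to note the asymmetry between source-side and sink-side arguments (which forces the "all sources in $V$" observation rather than an analogous sink observation) — once that is in place the rest is routine checking against the two edge types of $G^\top$ and $G^\bot$.
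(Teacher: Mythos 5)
Your verification is correct and is exactly the routine check the paper has in mind: the paper offers no proof at all, simply declaring Lemma~\ref{lem:polyplus} trivial, and your case split on whether some coordinate equals $\top$ (resp.\ $\bot$), together with the observation that every edge of $G^\top$ has its source in $V$, is the intended argument. Nothing is missing.
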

A \textit{regular} equation is an equation of the form
$s(x_1,x_2,\dots,x_m)= t(y_1,y_2,\dots,y_n)$ where the variables
$\{x_1,\dots,x_m\}=\{y_1,\dots,y_n\}$.  Examples of regular systems of equations are those defining the weak NU (omitting unary type) and omitting semilattice and affine type ($\SD(\wedge)$), as well as any family of totally symmetric idempotent polymorphisms of all arities and the $2$-semilattice equations.

\begin{pro}\label{pro:regular}
  Let $G$ be a digraph.  If a property $P$ is defined by regular polymorphism equations, then $G^{[i,j]}$ has property $P$ if and only if $G$ has property~$P$.  
\end{pro}
\begin{proof}
The backward implication follows by repeated application of Lemma \ref{lem:polyplus}, along with a trivial check that if $s$ and $t$ are polymorphisms satisfying a regular equation, then $s^\top$ and $t^\top$ satisfy the same equation.  The forward implication follows from Lemma \ref{subalgebra}.
\end{proof}
The polymorphism $p^{\top\bot}$ is not identical to $p^{\bot\top}$, even if $(G^\top)^\bot$ is identical to $(G^\bot)^\top$.  So the extension of polymorphisms for $P$ on $G$ to $G^{[i,j]}$ is never unique.

\begin{cor}\label{cor:regular}
Let $G$ be a digraph.  Then the following properties hold on $G$ and $G^{[i,j]}$ equivalently\up: the Taylor property\up; the $\SD(\wedge)$ property\up; the $2$-semilattice property\up; totally symmetric idempotent polymorphisms of all arities.
\end{cor}
\begin{eg}\label{eg:cycle2sl}
Recall that $\mathbb{C}_n$ denotes the directed cycle on $n$ points \up(with no edges if $n=1$\up).  If $i\leq 0\leq j$ then $\mathbb{C}_n^{[i,j]}$ has polymorphisms witnessing $\SD(\wedge)$ and has a $2$-semilattice polymorphism if and only if $n$ is even.  If $n>1$ then $\mathbb{C}_n^{[i,j]}$ does not have a totally symmetric idempotent polymorphisms of arity $n$.
\end{eg}
\begin{proof}
This follows from Example \ref{eg:cycle} and Corollary \ref{cor:regular}.
\end{proof}

\section{Congruence
  $n$-permutability}\label{sec:npermtopbot}
\noindent Consider the following property of digraphs:
\begin{itemize}
\item[] $v^+\subseteq w^+$ and $v^-\subseteq w^-$ implies $v=w$, for any $v,w\in V$.
\end{itemize}
A directed graph failing this property is often called \emph{dismantlable}; otherwise the digraph will be said to be \emph{nondismantlable}.

\begin{lemma}\label{P-for-cores}
If $G$ is a core, then $G$ is nondismantlable. 
\end{lemma}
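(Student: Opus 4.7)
The plan is to prove the contrapositive: if $G$ is dismantlable, then $G$ has an endomorphism that is not an automorphism. So assume there are distinct vertices $v, w \in V$ with $v^+ \subseteq w^+$ and $v^- \subseteq w^-$. The natural candidate endomorphism is the ``fold'' map $f : V \to V$ defined by $f(v) = w$ and $f(u) = u$ for every $u \neq v$. Since $f$ collapses two distinct vertices to one, $f$ is not injective and therefore not an automorphism, so it suffices to check that $f$ is a homomorphism of digraphs.

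To verify the homomorphism property, I would do a short case analysis on an edge $(x,y) \in E$. If neither endpoint is $v$, then $f$ acts as the identity on $x,y$ and the edge is preserved. If $x = v$ and $y \neq v$, then $y \in v^+ \subseteq w^+$, so $(w,y) = (f(x), f(y)) \in E$; the case $y = v$, $x \neq v$ is symmetric using $v^- \subseteq w^-$. The remaining case would be a loop at $v$, which is excluded by the standing no-loops assumption on our digraphs.

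The one subtlety worth flagging is the interaction between $v$ and $w$ themselves. If $(v,w) \in E$, then $w \in v^+ \subseteq w^+$, forcing a loop at $w$; similarly $(w,v) \in E$ would force a loop at $w$. Under the no-loops convention both possibilities are ruled out, so no additional care is needed when $x$ or $y$ equals $w$ in the edges incident to $v$. This is the only place the no-loops hypothesis enters, and it is the step I would highlight as the main (very mild) obstacle. Once these cases are dispatched, $f$ is a non-bijective endomorphism of $G$, contradicting that $G$ is a core.
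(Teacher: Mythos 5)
Your proposal is correct and is essentially the paper's own argument: the paper also observes that sending $v$ to $w$ (and fixing everything else) gives a nontrivial retraction of $G$, contradicting that $G$ is a core. You merely spell out the edge-preservation check in more detail; note in fact that the containments $v^+\subseteq w^+$ and $v^-\subseteq w^-$ already yield the required edges even in the cases you flag, so the no-loops convention is not strictly needed here.
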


\begin{proof}
  Let $w,v\in G$ have $v^+\subseteq w^+$ and $v^-\subseteq w^-$.  If
  $w\neq v$, then removing $v$ from $G$ results in a subgraph equal to
  the homomorphic image of $G$ obtained by sending $v$ to $w$. This
  gives a nontrivial retraction of $G$, contradicting $G$ being a
  core. Thus $w=v$ as required.
\end{proof}

Notice that a graph is nondismantlable if and only if its one-point extension (by a total source or by a total sink) is nondismantlable.

\begin{lemma}\label{proj}
  Let $G=(V,E)$ be a digraph and let $p(x,y,z)$ be a polymorphism on
  $G^{\bot\top}$ such that $p(x,y,y) = x$. Then,
\begin{enumerate}
\item $a^+\subseteq p(a,b,c)^+$ for all $a,b,c\in V\cup\{\bot\}$.
\item $a^-\subseteq p(a,b,c)^-$ for all $a,b,c\in V\cup\{\top\}$.
\item if $G$ is nondismantlable, then $p(a,b,c) = a$ for all $a,b,c\in V$.
\end{enumerate}
\end{lemma}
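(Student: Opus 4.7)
The plan is to exploit the Mal'tsev-type identity $p(x,y,y)=x$ together with the fact that $\top$ is a total sink and $\bot$ is a total source in $G^{\bot\top}$.

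For part (1), I would fix $a,b,c \in V\cup\{\bot\}$ and $v \in a^+$. Since $b,c \neq \top$, both $(b,\top)$ and $(c,\top)$ are edges of $G^{\bot\top}$; combined with the edge $(a,v)$, the polymorphism condition gives the edge
\[
(p(a,b,c),\, p(v,\top,\top)).
\]
The hypothesis $p(x,y,y)=x$ yields $p(v,\top,\top)=v$, hence $v \in p(a,b,c)^+$. Part (2) is the exact dual: for $a,b,c \in V \cup \{\top\}$ and $v \in a^-$, the fact that $b,c \neq \bot$ makes $(\bot,b),(\bot,c)$ edges, and applying $p$ componentwise to these together with $(v,a)$ produces $(p(v,\bot,\bot),\, p(a,b,c)) = (v, p(a,b,c))$ as an edge.

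For part (3), assume $G$ is nondismantlable and $a,b,c \in V$. The first sub-step is to show that $p(a,b,c) \in V$: since $a \in V \subseteq (V\cup\{\bot\})\cap(V\cup\{\top\})$ and $a \to \top$, $\bot \to a$ in $G^{\bot\top}$, parts (1) and (2) give $\top \in p(a,b,c)^+$ and $\bot \in p(a,b,c)^-$. As $\top$ has no outgoing edges and $\bot$ no incoming edges, $p(a,b,c) \notin \{\top,\bot\}$. Now that $p(a,b,c)$ and $a$ both lie in $V$, the containments from parts (1) and (2) restrict to the corresponding containments between neighbourhoods computed inside $G$ (the only additional edges in $G^{\bot\top}$ touching $V$-vertices involve $\bot$ or $\top$, which are the same on both sides of the inclusions). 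Applying the nondismantlability hypothesis on $G$ to $v=a$ and $w=p(a,b,c)$ yields $p(a,b,c)=a$.

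No step is substantially hard; the only point requiring care is separating neighbourhoods in $G$ from neighbourhoods in $G^{\bot\top}$ so that the nondismantlability assumption (which is a property of $G$) can be applied cleanly in the final step.
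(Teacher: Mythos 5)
Your proposal is correct and follows essentially the same argument as the paper: parts (1) and (2) use the edges $b\rightarrow\top$, $c\rightarrow\top$ (dually $\bot\rightarrow b$, $\bot\rightarrow c$) together with $p(v,\top,\top)=v$, and part (3) combines the two inclusions with nondismantlability. Your extra step checking that $p(a,b,c)\in V$ and that the neighbourhood inclusions restrict from $G^{\bot\top}$ to $G$ is a careful filling-in of a detail the paper leaves implicit, not a different route.
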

\begin{proof}
  Take $a,b,c\in V\cup\{\bot\}$. By definition we have $b\ra\top$ and
  $c\ra\top$ in $G^{\bot\top}$, and therefore $p(a,b,c)\ra
  p(x,\top,\top) = x$, for any $x$ such that $a\ra x$ in
  $G^{\bot\top}$.  Putting $d:=p(a,b,c)$, we obtain $d\ra x$, for
  any $x\in V$ such that $a\ra x$. Thus $a^+\subseteq d^+$, proving
  (1). The proof of (2) is symmetric.  Finally, for (3), if $a,b,c\in
  V$, then $a^+\subseteq d^+$ and $a^-\subseteq d^-$ both hold, and
  then if $G$ is nondismantlable, we conclude $a=d$.
\end{proof}

Recall that \emph{$1$-permutable} is interpreted to mean that the equation $x=y$ (which is satisfied only by one-element structures) holds.
\begin{theorem}\label{n-perm}
Let $G$ be a digraph and $n\geq 1$.  If $G$ is $n$-permutable then $G^{\bot\top}$ is $(n+2)$-permutable.  If $G$ is nondismantlable
and $G^{\bot\top}$ is $(n+2)$-permutable, then $G$ is congruence $n$-permutable.
\end{theorem}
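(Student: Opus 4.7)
I would tackle the two directions separately, starting with the cleaner backward direction.  Given polymorphisms $q_1,\ldots,q_{n+1}$ witnessing $(n+2)$-permutability of $G^{\bot\top}$ (omitting the trivial projection endpoints $q_0$ and $q_{n+2}$), the plan is to restrict the middle polymorphisms $q_2,\ldots,q_n$ to~$V$.  Lemma~\ref{proj}(3) applied to $q_1$---which satisfies $q_1(x,y,y)=x$---together with the assumed nondismantlability of $G$ immediately yields $q_1(a,b,c)=a$ for all $a,b,c\in V$.  A mirror-image of Lemma~\ref{proj}, derived by using the edges $(a,b,c)\rightarrow(\top,\top,x)$ (for $c\rightarrow x$) and $(\bot,\bot,x)\rightarrow(a,b,c)$ (for $x\rightarrow c$), shows that $c^+\subseteq q_{n+1}(a,b,c)^+$ and $c^-\subseteq q_{n+1}(a,b,c)^-$ for $a,b,c\in V$, whence nondismantlability forces $q_{n+1}(a,b,c)=c$ on $V$.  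Next, for $i=2,\ldots,n$, I verify that $q_i$ carries $V^3$ into $V$: if $q_i(a,b,c)=\top$ for some $a,b,c\in V$, then the edge $(a,b,c)\rightarrow(\top,\top,\top)$ combined with idempotence of $q_i$ gives the loop $\top\rightarrow\top$, contradicting the no-loops assumption; a symmetric argument using $(\bot,\bot,\bot)\rightarrow(a,b,c)$ excludes the value $\bot$.  Setting $p_i:=q_{i+1}|_V$ for $i=1,\ldots,n-1$, the $(n+2)$-permutability chain, together with the projection identities of $q_1|_V$ and $q_{n+1}|_V$, collapses to the $n$-permutability chain $x=p_1(x,y,y)$, $p_i(x,x,y)=p_{i+1}(x,y,y)$, $p_{n-1}(x,x,y)=y$ on $G$.  (In the degenerate case $n=1$ there are no middle polymorphisms and the chain collapses directly to $x=y$ on $V$, forcing $|V|=1$.)

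For the forward direction, given polymorphisms $p_1,\ldots,p_{n-1}$ witnessing $n$-permutability of $G$, I set $q_{i+1}:=p_i^{\bot\top}$ for $i=1,\ldots,n-1$ (the canonical extension from Definition~\ref{defn:top}) and construct two new ``boundary'' polymorphisms $q_1$ and $q_{n+1}$ on $G^{\bot\top}$.  Explicitly, $q_1(a,b,c)$ is defined to be $a$ if $b=c$; $\top$ if $b\neq c$ and some coordinate of $(a,b,c)$ equals $\top$; $\bot$ if $b\neq c$, no coordinate equals $\top$, and some coordinate equals $\bot$; and $a$ otherwise (all coordinates in $V$, $b\neq c$).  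The polymorphism $q_{n+1}$ is defined dually, replacing ``$b=c$'' by ``$a=b$'' and returning $c$ in place of $a$ (with $\top$ still winning over $\bot$ in the mixed case).

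The verification of the forward construction divides into three pieces.  First, the endpoint identities $q_1(x,y,y)=x$ and $q_{n+1}(x,x,y)=y$ are immediate from the case splits.  Second, the chain equations $q_i(x,x,y)=q_{i+1}(x,y,y)$ are checked by direct case analysis: on inputs $x,y\in V$ they follow from the $n$-permutability chain on $G$, while for inputs involving $\bot$ or $\top$ both sides evaluate to $\top$ when some argument is $\top$, and to $\bot$ when no argument is $\top$ but some is $\bot$.  Third, $q_1$ and $q_{n+1}$ preserve edges because in any edge $(a_1,b_1,c_1)\rightarrow(a_2,b_2,c_2)$ of $(G^{\bot\top})^3$ the source cannot contain $\top$ (as $\top$ is a sink) and the target cannot contain $\bot$ (as $\bot$ is a source), leaving only a small number of cases, all of which work out thanks to the total-source and total-sink behaviour of $\bot$ and $\top$.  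The main obstacle is guessing the correct boundary formulas for $q_1$ and $q_{n+1}$; once these are fixed, verification is mechanical if somewhat tedious.
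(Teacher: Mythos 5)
Your proposal is correct and follows essentially the same route as the paper: the canonical extensions $p_i^{\bot\top}$ for the middle terms, boundary terms $q_1$ and $q_{n+1}$ defined by a case split on equality of the last two (respectively first two) arguments, and, for the converse, Lemma~\ref{proj} together with nondismantlability to force $q_1$ and $q_{n+1}$ to act as projections on $V$ before restricting the chain. The only differences are cosmetic (you let $\top$ dominate $\bot$ in the mixed cases where the paper lets $\bot$ dominate) or matters of added explicitness (the mirror form of Lemma~\ref{proj}, the check that $q_i(V^3)\subseteq V$, and the degenerate case $n=1$), all of which are consistent with the paper's argument.
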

\begin{proof}
Suppose $G$ is $n$-permutable as witnessed by ternary polymorphisms $p_0,\dots,p_{n}$. We define terms $q_0,\dots, q_{n+2}$ as follows:
\begin{align*}
q_0(x,y,z)&=x\\
q_1(x,y,z) &=\begin{cases}
x & \text{if } y=z\\
p_0^{\top\bot}(x,y,z) & \text{otherwise} 
\end{cases}\\
q_{i+1}(x,y,z) &=p_i^{\top\bot}(x,y,z)\\
q_{n+1}(x,y,z) &=\begin{cases}
z & \text{if } x=y\\
p_n^{\top\bot}(x,y,z) & \text{otherwise}
\end{cases}\\
q_{n+2}(x,y,z)&=z\\
\end{align*}
where $1\leq i\leq n-1$.  Observe also, that the
``otherwise'' case always implies $\{\bot,\top\}\cap\{x,y,z\}\neq\varnothing$.

For $2\leq j\leq n$, the operations $q_j(x,y,z)$ are polymorphisms by Lemma \ref{lem:polyplus}.  For $q_1$ and $q_{n+1}$, the definitions have two cases, but as one case is a projection, and the other is defined in terms of polymorphisms $p$, the property of being a polymorphism could only fail at a pair of tuples $(a,b,c)\rightarrow (a',b',c')$ where one tuple falls into the first case, and the other into the second.  We consider $q_1$, with $q_{n+1}$ very similar.  Assume first that $(a,b,c)\rightarrow (a',b',c')$ with $b=c$ but $b'\neq c'$; in this case $q_1(a,b,c)=a$.  Now $p_0^{\top\bot}$ is also a first projection (so that $a\rightarrow a'=q_1(a',b',c')$ as required) unless $\top$ or $\bot$ is contained in $\{a',b',c'\}$.  But $(a,b,c)\rightarrow (a',b',c')$ ensures that $\bot\notin\{a',b',c'\}$ so that if   $\top$ or $\bot$ is contained in $\{a',b',c'\}$, then $\top\in \{a',b',c'\}$ and $a\rightarrow \top=q_1(a',b',c')$, as required.
The case where $b\neq c$ but $b'=c'$ is an almost identical argument using $\bot$ in place of $\top$.  Thus in every case, adjacency is preserved, and $q_1$ (and by symmetry, $q_{n+1}$) is a polymorphism.

It remains to show that $q_0,\dots,q_{n+2}$ witness $n+2$-permutability 
on $G^{\bot\top}$, that is, $q_0(x,y,z)=x = q_1(x,y,y)$, 
$q_j(x,x,y) = q_{j+1}(x,y,y)$ and $q_{n+1}(x,x,y) = y=q_{n+2}(x,z,y)$ hold for 
all $j\in\{1,\dots, n\}$. Since the first and last pairs of equalities hold by the
respective definitions, we only need to verify the middle one.  Moreover, Lemma \ref{lem:polyplus} shows that all the other equalities hold, with the possible exception of $q_1(x,x,y)=q_2(x,y,y)$ and $q_n(x,x,y)=q_{n+1}(x,x,y)$.  However, if $\{x,y\}\not\subseteq V$, then these again follow from Lemma \ref{lem:polyplus}. If $\{x,y\}\subseteq V$, then $q_1(x,x,y)=x$, while $q_2(x,y,y)=p_1(x,y,y)=x$ also.  Similarly $q_n(x,x,y)=y=q_{n+1}(x,y,y)$ in this case.

Now, to prove the converse for nondismantlable digraphs, suppose $G^{\bot\top}$ is $(n+2)$-permutable, and $G$ is nondismantlable. Let 
$q_0,\dots,q_{n+2}$ be ternary terms witnessing congruence $(n+2)$-permutability. By Lemma~\ref{proj},
$q_1$ and $q_{n+1}$ are respectively the first and third projections on $G$.
In particular, for any $a,b\in V$ we have 
$a = q_1(a,a,b) = q_2(a,b,b)$, where the
second equality follows from the fact that 
$q_1(x,x,y) = q_2(x,y,y)$ holds for any $x,y\in V\cup \{ \bot,\top\}$, which in turn 
follows from $n+2$-permutability of $G^{\bot\top}$ witnessed by 
$q_0,q_1,\dots,q_{n+1},q_{n+2}$. Similarly, for any $a,b\in V$ we have 
$q_{n}(a,a,b) = q_{n+1}(a,b,b) = b$. Now, for $i\in\{0,\dots, n\}$, 
define $p_i$ to be $q_{i+1}$ restricted to $V$.
Each $p_i$ is then a polymorphism on $G$, and 
$p_0,\dots,p_{n}$ satisfy the conditions for $n$-permutability.
\end{proof}

Notice that Theorem~\ref{n-perm} does not apply directly to cases
where $G^{\bot\top}$ is 2-permutable. All we can say in this
case is: if $G^{\bot\top}$ is 2-permutable, then it is also
4-permutable, hence $G$ has a Maltsev term.  

\begin{eg}\label{eg:transtournament}
Let $\mathbb{T}_n$ be the transitive tournament on $n\geq 2$ vertices. Then
$\mathbb{T}_n$ is congruence $n$-permutable, but not congruence $(n-1)$-permutable.  Also, $\mathbb{T}_n$ has a majority polymorphism and totally symmetric idempotent polymorphisms of all arities.
\end{eg}
\begin{proof}
We first consider the congruence $n$-permutability claims.
Clearly $\mathbb{T}_2$ has a Maltsev polymorphism but fails $x=y$.  Also $\mathbb{T}_3$ is $3$-permutable by Theorem \ref{n-perm}.  However $\mathbb{T}_3$ does not have a Maltsev polymorphism because
  every ternary polymorphism $p$ must satisfy $p(1,1,2)\ra p(2,3,3)$
  in $T_3$ and a Maltsev polymorphism would require $2\ra 2$ which is
  not the case.
  The result now follows by an easy induction argument using Theorem \ref{n-perm}  and the base cases $\mathbb{T}_2$ and $\mathbb{T}_3$.
  
To define a majority polymorphism $m(x,y,z)$ on $\mathbb{T}_n$, let $m(x,y,z)$ take the middle value of $x,y,z$ (or majority if $|\{x,y,z\}|\leq 2$).  The operation $t_n(x_1,\dots,x_n):=\min\{x_1,\dots,x_n\}$ is a totally symmetric idempotent polymorphism.
\end{proof}
Relative to Figure \ref{fig:maltsev}, this example shows that the problem $\CSP(\mathbb{T}_k)$ lies at the node labelled ``CD and C$n$P'', where (by Theorem \ref{eg:transtournament}) the precise value of $n$ is $k$.
%
%
%

The following theorem is the Hobby-McKenzie analogue of Theorem \ref{n-perm}.
\begin{thm}\label{thm:HobMcK}
If $G=(V,E)$ is a digraph, then $G^{\top\bot}$ has the Hobby-McKenzie property if and only if $G$ has the Hobby-McKenzie property.
\end{thm}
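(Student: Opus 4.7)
The plan is to prove both directions. For the forward direction: if $G^{\top\bot}$ has HM polymorphisms, restricting them to $V$ yields HM polymorphisms of $G$. This works because $V = \top^- \cap \bot^+$ in $G^{\top\bot}$ is closed under every idempotent polymorphism by Lemma~\ref{subalgebra} (applied to the vertices $\top$ and $\bot$), and because the HM equations, being universal, together with the projection requirements on $d_0$ and $e_n$, restrict from $G^{\top\bot}$ to $G$ without issue.

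The converse is constructive and mirrors the proof of Theorem~\ref{n-perm}. Let $d_0, d_1, \dots, d_n, p, e_0, \dots, e_n$ be HM terms on $G$ (with $d_0, e_n$ projections). Without loss of generality $n$ is even, for otherwise the chain can be lengthened trivially by setting $d_{n+1} := d_n$ and $e_{n+1} := e_n$, a step that satisfies all HM equations automatically (the new $e_{n+1}$ remains a third projection). On $G^{\top\bot}$ I define HM terms with parameter $n+2$: let $\tilde{D}_0$ and $\tilde{E}_{n+2}$ be the literal first and third projections on $G^{\top\bot}$; set $\tilde{D}_{i+2} := d_i^{\top\bot}$, $\tilde{E}_i := e_i^{\top\bot}$ for $0\leq i\leq n$, $\tilde{P} := p^{\top\bot}$; and introduce the bridging terms
\begin{align*}
\tilde{D}_1(x,y,z) &:= \begin{cases} x & \text{if } y = z, \\ d_0^{\top\bot}(x,y,z) & \text{otherwise,} \end{cases}\\
\tilde{E}_{n+1}(x,y,z) &:= \begin{cases} z & \text{if } x = y, \\ e_n^{\top\bot}(x,y,z) & \text{otherwise.} \end{cases}
\end{align*}
The HM equations for this chain then follow routinely: the initial transition $\tilde{D}_0(x,y,y) = \tilde{D}_1(x,y,y) = x$ holds by the first branch of the definition; both branches of $\tilde{D}_1$ collapse to $d_0^{\top\bot}$ on inputs of shape $(x,x,y)$ and $(x,y,x)$ (using idempotence in the $x = y$ subcase), giving $\tilde{D}_1(x,x,y) = \tilde{D}_2(x,x,y)$ and $\tilde{D}_1(x,y,x) = \tilde{D}_2(x,y,x)$; the balanced equations for the middle terms are preserved by the $\top\bot$-extension through Lemma~\ref{lem:polyplus} and the HM equations of $G$; the middle transitions $\tilde{D}_{n+2}(x,y,y) = \tilde{P}(x,y,y)$ and $\tilde{P}(x,x,y) = \tilde{E}_0(x,x,y)$ inherit from those on $G$; and the $\tilde{E}$-end is symmetric.

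The main obstacle will be verifying that the bridging terms $\tilde{D}_1$ and $\tilde{E}_{n+1}$ are themselves polymorphisms of $G^{\top\bot}$. I would proceed by case analysis on an edge $(x,y,z) \to (x',y',z')$ according to which branch of the case split applies to each of the source and target tuples. When both fall into the same branch, the edge condition reduces either to preservation of the first (respectively third) projection, or to Lemma~\ref{lem:polyplus} applied to $d_0^{\top\bot}$ (respectively $e_n^{\top\bot}$). In the mixed cases the geometry of $G^{\top\bot}$ does the work: since $\top$ has no successors and $\bot$ has no predecessors in $G^{\top\bot}$, most problematic subcases are vacuous, and the remaining ones are handled essentially as in the $q_1$/$q_{n+1}$ analysis in the proof of Theorem~\ref{n-perm}, exploiting that $d_0$ and $e_n$ are projections.
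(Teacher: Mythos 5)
Your proof is correct and follows essentially the same route as the paper: the hard direction is handled by an explicit length-$(n{+}2)$ chain on $G^{\top\bot}$ built from the $\top\bot$-extensions of the original terms plus two bridging terms whose polymorphism property is checked exactly as for $q_1$ and $q_{n+1}$ in Theorem~\ref{n-perm}. The only differences are cosmetic: you normalise the parity of $n$ by padding where the paper instead gives two separate definitions of $E_{n+1}$, and your case-split for $\tilde{D}_1$ (on $y=z$ rather than the paper's ``$x=y$ or $x=z$'') agrees with the paper's except on triples of pairwise distinct entries, where both choices satisfy the required identities.
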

\begin{proof}
The proof is similar to that of Theorem \ref{n-perm}.  Assume that $G$ has ternary polymorphisms $d_0,\dots,d_n,p,e_n,\dots,e_0$ witnessing the equations for the Hobby-McKenzie property.  
We now define a sequence of ternary polymorphisms $D_0$, \dots, $D_{n+2}$, $P$, $E_{0}$, \dots, $E_{n+2}$ on $G^{\top\bot}$ also witnessing this property.
For $2\leq i\leq n+2$ we define $D_i(x,y,z):=d_{i-2}^{\top\bot}(x,y,z)$ and $E_{i-2}(x,y,z):=e_{i-2}^{\top\bot}(x,y,z)$.  We also let $P(x,y,z)=p^{\top\bot}(x,y,z)$.  Next we define $D_0$ and $E_{n+2}$ to be projections, $D_0(x,y,z):=x$ and $E_{n+2}(x,y,z):=z$.  We define
\[
D_1(x,y,z):=\begin{cases}
d_0^{\top\bot}(x,y,z)&\text{ if }x=y\text{ or }x=z\\
x&\text{ otherwise.}
\end{cases}
\]
If $n$ is even, then we require  $E_{n}(x,y,y)=E_{n+1}(x,y,y)$ and $E_n(x,y,x)=E_{n+1}(x,y,x)$, while $E_{n+1}(x,x,y)=y$.  As $E_n(x,y,y)=e_n^{\top\bot}(x,y,y)$ we require
\[
\text{ (for $n$ even)}\qquad E_{n+1}(x,y,z):=\begin{cases}
z&\text{ if }x=y\\
e_n^{\top\bot}(x,y,z)&\text{ otherwise.}
\end{cases}
\]
If $n$ is odd, then we require  $E_{n}(x,x,y)=E_{n+1}(x,x,y)$ with 
$E_{n+1}(x,y,x)=E_{n+2}(x,y,x)=x$ and $E_{n+1}(x,y,y)=y$.  In this case we define
\[
\text{ (for $n$ odd)}\qquad E_{n+1}(x,y,z):=\begin{cases}
z&\text{ if }x=z\text{ or }y=z\\
e_n^{\top\bot}(x,y,z)&\text{ otherwise.}
\end{cases}
\]
We now verify that these are polymorphisms.  Consider a pair of adjacent tuples $(a,b,c)\rightarrow (a',b',c')$.  
The verification that $D_1$ is a polymorphism is essentially identical to the argument in Theorem \ref{n-perm}; we omit further details.

For $E_{n+1}$ we have two cases.  Let $n$ be even.  As $e_n(x,y,z)=z$ always, 
the two possible cases in the definition of $E_{n+1}(x,y,z)$ agree (and $E_{n+1}(x,y,z)=z$) unless $\top$ or $\bot$ appear in $\{x,y,z\}$.  Now if $\top$ or $\bot$ appear in  $(a,b,c)$ then it can only be $\bot$, while if $\top$ or $\bot$ appear in $(a',b',c')$ it can only be $\top$.  In the first case we have that $E_{n+1}(a',b',c')\in\{c',\top\}$, while $E_{n+1}(a,b,c)\in \{c,\bot\}$.  In each of the four possible cases, adjacency is preserved.  The case where $\top$ appears in $(a',b',c')$ is very similar.

Now let $n$ be odd.  In this case it remains true that the two cases defining $E_{n+1}$ are in agreement unless $e_n^{\top\bot}$ fails to act as a third projection, which is if and only if $\top$ or $\bot$ is contained in $\{x,y,z\}$.  In the case of the adjacency $(a,b,c)\rightarrow (a',b',c')$, it is now seen that the previous argument holds without change.

The verification that $D_0,\dots,D_{n+2},P,E_0,\dots,E_{n+2}$ satisfy the equations required to witness the Hobby-McKenzie property is routine: the equations are either regular, or we have defined them precisely in terms of the required equations.
\end{proof}
We mention that one can prove a kind of converse in the case of nondismantlable digraphs, showing that the length of the chain of equations determining the Hobby-McKenzie property \emph{must} increase under the addition of two sided extensions, however unlike the situation for congruence $n$-permutability, the length of these equations does not tie directly to a natural algebraic property, so we do not pursue this argument.
\begin{cor}
If $G$ is a digraph, then $G$ has polymorphisms witnessing the $\SD(\vee)$ property if and only if $G^{[i,j]}$ has.
\end{cor}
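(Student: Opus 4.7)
The plan is to decompose the $\SD(\vee)$ condition into its two defining components---$\SD(\wedge)$ and the Hobby-McKenzie property---and to transfer each across the extension $G\mapsto G^{[i,j]}$ separately.

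For $\SD(\wedge)$, the characterising polymorphism equations (a $3$-ary weak NU $w_1$ and a $4$-ary weak NU $w_2$ linked by $w_1(y,x,x)=w_2(y,x,x,x)$) are all balanced, so Corollary \ref{cor:balanced} gives the equivalence between $G$ and $G^{[i,j]}$ directly.

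For the Hobby-McKenzie part, the backward direction is handled uniformly by Lemma \ref{subalgebra}.  Inside $G^{[i,j]}$, the original vertex set $V$ is closed under every idempotent polymorphism: it equals $(-1)^+\cap 1^-$ when $i<0<j$, it equals $(-1)^+$ when $j=0$, and it equals $1^-$ when $i=0$.  Restricting any Hobby-McKenzie polymorphisms of $G^{[i,j]}$ to $V$ therefore yields Hobby-McKenzie polymorphisms of the induced subdigraph $G$.

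The forward direction is the main step, but it reduces to a single iteration combining Theorem \ref{thm:HobMcK} with a restriction-to-closed-subdigraph move.  Given Hobby-McKenzie polymorphisms on $G$, Theorem \ref{thm:HobMcK} supplies them on $G^{[-1,1]}$.  Inside $G^{[-1,1]}$, the set $V\cup\{\top\}=\bot^+$ is closed under every idempotent polymorphism by Lemma \ref{subalgebra}, and the induced subdigraph on $V\cup\{\top\}$ is precisely $G^\top$.  Restriction therefore transfers the Hobby-McKenzie polymorphisms from $G^{[-1,1]}$ down to $G^\top$; the symmetric restriction to $V\cup\{\bot\}=\top^-$ gives Hobby-McKenzie on $G^\bot$.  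Iterating this combined step---apply Theorem \ref{thm:HobMcK} to the current digraph, then restrict to the appropriate closed induced subdigraph---reaches any $G^{[i,j]}$ in $|i|+j$ applications, so combining this with the transfer of $\SD(\wedge)$ completes the proof.  The main technical point is that Lemma \ref{subalgebra} makes a separate one-sided ``boundary'' construction unnecessary: once the two-sided transfer from Theorem \ref{thm:HobMcK} is available, one always passes through $G^{[-1,1]}$ and restricts to the closed in- or out-neighbour set of the added sink or source.
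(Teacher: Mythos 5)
Your proposal is correct and follows the same overall route as the paper: decompose $\SD(\vee)$ as the conjunction of $\SD(\wedge)$ and Hobby--McKenzie, dispatch the $\SD(\wedge)$ half via Corollary \ref{cor:balanced}, and handle Hobby--McKenzie via Theorem \ref{thm:HobMcK}. Where you genuinely add something is in the Hobby--McKenzie half. The paper's proof is a single sentence asserting stability ``under the addition of one-point extensions by \dots\ Theorem \ref{thm:HobMcK}'', even though that theorem as stated only covers the simultaneous two-sided extension $G^{\top\bot}$, so it does not by itself reach one-sided or unbalanced digraphs such as $G^{[0,j]}$ or $G^{[i,j]}$ with $|i|\neq j$. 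Your restriction move closes exactly that gap: apply Theorem \ref{thm:HobMcK} to pass to $G^{[-1,1]}$, note that $\bot^{+}=V\cup\{\top\}$ (respectively $\top^{-}=V\cup\{\bot\}$) is closed under every idempotent polymorphism by Lemma \ref{subalgebra} and induces precisely $G^{\top}$ (respectively $G^{\bot}$), and restrict; iterating reaches any $G^{[i,j]}$. The same closure argument ($V=(-1)^{+}\cap 1^{-}$, or the appropriate one-sided neighbour set) cleanly gives the backward direction, which the paper also leaves implicit. So the two proofs rest on the same key results, but yours is the more complete argument; the only cost is the bookkeeping of the iterated extend-then-restrict step.
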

\begin{proof}
This is because the $\SD(\vee)$ condition is equivalent to the simultaneous satisfaction of the $\SD(\wedge)$ and Hobby-McKenzie properties.  These properties are stable under the addition of one-point extensions by Corollary \ref{cor:regular} and Theorem \ref{thm:HobMcK}.
\end{proof}
\begin{eg}\label{eg:cycleSDjoin}
Let $\mathbb{C}_n$ be the directed $n$-cycle, with $n>1$.  Then for $i\leq 0\leq j$ we have $\mathbb{C}_n^{[i,j]}$ satisfying $\SD(\vee)$ and congruence $(2\max\{|i|,|j|\}+2)$-permutability, but not congruence $2\min\{|i|,|j|\}$-permutability.
\end{eg}
\begin{proof}
Without loss of generality assume $|i|\leq |j|$.  Recalling Example \ref{eg:cycle} , we have that $\mathbb{C}_n$ is congruence $2$-permutable and so $\mathbb{C}_n^{[-j,j]}$ is congruence $(2j+2)$-permutable by Theorem \ref{n-perm}.  But as $\mathbb{C}_n$ is nondismantlable, Theorem \ref{n-perm} also shows that $\mathbb{C}_n^{[i,-i]}$ is not congruence $(2i)$-permutable.  (Strictly this involves an easy induction, starting from the fact that $\mathbb{C}_n$ is not trivial---that is, not 1-permutable---when $n>1$.)
\end{proof}
Note that if $n=1$ then $\mathbb{C}_1^{[i,j]}$ is simply $\mathbb{T}_{j-i+1}$, and the corresponding facts are covered by Example \ref{eg:transtournament}.

\section{Permutational digraphs and
  $n$-permutability}\label{sec:permutation}
\noindent We call a digraph $G = (V,R)$ \emph{permutational} if it is
a disjoint union of directed cycles, and hence $R$ is a permutation
on $V$. Since we assume that digraphs have no loops, a permutational
digraph $G$ is nontrivial and the permutation defined by $R$ has no
fixpoints.  Observe that any permutational digraph has a Maltsev
polymorphism: simply define $m(x,y,z)=x$ whenever $x\neq y$ or $y\neq
z$ and the Maltsev equations otherwise.

Let $G$ be a digraph. Define the \emph{depth} of a vertex $v$ in
$G^{[0,n]}$, for some $n>0$, as the smallest $k\geq 0$ such
that $v$ belongs to $G^{[0,k]}$. Let $\delta(v)$ denote the depth of~$v$.  
The next lemma spells out some properties of $G^{[0,n]}$ that we
will make use of in this section.

\begin{lemma}\label{Gn-props}
  Let $G=(V,E)$ be permutational and $n\geq 0$.  Then, the following
  hold in $G^{[0,n]}$:
\begin{enumerate}
\item $x^+\subseteq y^+$ implies $\delta(x) \geq \delta(y)$. 
\item $\delta(x) = \delta(y)> 0$ implies $x = y$.
\item $\{x,y\}\not\subseteq V$ implies  there exists $z\in V$ such that
$z\ra x$ and $z\ra y$.
\item $x^-\subseteq y^-$ and $x\neq y$ imply $x\ra y$.
\end{enumerate}
\end{lemma}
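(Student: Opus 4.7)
The plan is to derive all four claims from an explicit computation of $v^+$ and $v^-$ for each kind of vertex in $G^{[0,n]}$. Because $E$ is a fixpoint-free permutation on $V$, every $v \in V$ has a unique $E$-successor and a unique $E$-predecessor; combined with the extra edges added by the $[0,n]$ construction, one reads off
\[
v^+ = \{E(v)\} \cup \{1,\dots,n\}, \qquad v^- = \{E^{-1}(v)\} \quad\text{for } v\in V,
\]
\[
k^+ = \{k+1,\dots,n\}, \qquad k^- = V\cup\{1,\dots,k-1\} \quad\text{for } k\in\{1,\dots,n\}.
\]
A useful auxiliary observation is that $|V|\geq 2$, because a permutation without fixpoints has only cycles of length at least two.

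For (1), I would argue the contrapositive: if $\delta(x)<\delta(y)$, then $\delta(x)+1$ lies in $x^+$ (when $\delta(x)=0$ this uses that every $v\in V$ dominates $1$; when $\delta(x)>0$ it is just the next element of the tournament tail), while $\delta(x)+1\leq \delta(y)$ forces it out of $y^+$, so $x^+\not\subseteq y^+$. Claim (2) is essentially a tautology from the tabulation above: there is exactly one vertex of any given positive depth $k$, namely $k$ itself.

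For (3), I would split on how many of $x,y$ lie in $V$. If neither does, any element of the (nonempty) set $V$ serves via the complete bipartite edge set from $V$ to $\{1,\dots,n\}$. If exactly one of $x,y$ lies in $V$, say $y\in V$ and $x\in\{1,\dots,n\}$, then $z:=E^{-1}(y)$ works, since $z\to y$ by the definition of the $E$-edges and $z\to x$ because $z\in V$ and $x\in\{1,\dots,n\}$.

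Claim (4) requires a four-way case split on whether each of $x,y$ is in $V$ or in $\{1,\dots,n\}$. Two subcases are ruled out by the hypothesis $x^-\subseteq y^-$: with both $x,y\in V$ and $x\neq y$, the singleton in-neighbourhoods $\{E^{-1}(x)\}$ and $\{E^{-1}(y)\}$ are distinct; with $x\in\{1,\dots,n\}$ and $y\in V$, the set $x^-$ contains all of $V$ and so cannot fit inside the singleton $y^-$, using $|V|\geq 2$. In the remaining two subcases ($x\in V$ with $y$ of positive depth, or both $x,y$ of positive depth where the containment forces $\delta(x)<\delta(y)$), the edge $x\to y$ is immediate from the definition of the edge set of $G^{[0,n]}$. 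No step presents a serious obstacle; the one place to be careful is invoking $|V|\geq 2$ at the right moment in (4).
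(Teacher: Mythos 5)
Your proof is correct and follows essentially the same route as the paper's: the paper disposes of (1)--(3) as immediate from the definition of $G^{[0,n]}$ and proves (4) by exactly your case split (first showing $y\notin V$, then treating $x\in V$ and $x\notin V$ separately). Your version simply makes explicit the tabulation of $x^+$ and $x^-$ and the observation $|V|\geq 2$ that the paper leaves implicit.
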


\begin{proof}
  From the definition of $G^{[0,n]}$ it is easily seen that (1), (2)
  and (3) hold.  For~(4), observe that if $x^-\subseteq y^-$ and
  $x\neq y$ hold, then $y\notin V$.  If $x\in V$ then then $x\ra y$ holds by
  definition.  If $x\notin V$ then the conditions $x^-\subseteq y^-$ and
  $x\neq y$ (and the fact that $y\notin V$) again gives $x\ra y$.
\end{proof}

\begin{lemma}\label{pre-perm-proj}
  Let $G=(V,E)$ be a permutational digraph. If $p(x,y,z)$ is a polymorphism
  on $G^{[0,n+1]}$ with $p(x,y,y)=x$, then $p(a,b,c)=a$, for all
  $a,b,c\in V\cup \{ 1,2,\dots,n\}$.
\end{lemma}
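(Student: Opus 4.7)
Plan. The approach mirrors the template of Lemma~\ref{proj}: establish inclusions $a^+\subseteq p(a,b,c)^+$ and $a^-\subseteq p(a,b,c)^-$, then conclude $p(a,b,c)=a$ by invoking the structural properties of $G^{[0,n+1]}$ recorded in Lemma~\ref{Gn-props} in place of nondismantlability. Let $\sigma\colon V\to V$ denote the permutation encoded by $E$, so that the unique $E$-successor (resp.\ $E$-predecessor) of $v\in V$ lying in $V$ is $\sigma(v)$ (resp.\ $\sigma^{-1}(v)$). The new wrinkle relative to Lemma~\ref{proj} is that in this one-sided setting $b$ and $c$ need not share a common predecessor in $G^{[0,n+1]}$, so the argument for $a^-\subseteq p(a,b,c)^-$ requires some care.

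The inclusion $a^+\subseteq p(a,b,c)^+$ for every $a,b,c\in V\cup\{1,\dots,n\}$ is obtained exactly as in Lemma~\ref{proj}(1), using $n+1$ as a common successor of $b$ and $c$: for each $a\to x$ in $G^{[0,n+1]}$ one has $(a,b,c)\to(x,n+1,n+1)$, and $p(x,n+1,n+1)=x$ gives $p(a,b,c)\to x$. This alone settles the case $a\in V$: then $\sigma(a)\in V$ is a successor of $p(a,b,c)$, and since $G$ has no loops and $\{1,\dots,n+1\}$ sends no edges back into $V$, the only vertex of $G^{[0,n+1]}$ with $\sigma(a)$ as a successor is $a$ itself. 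For $a=k\in\{1,\dots,n\}$, the inclusion $a^+\subseteq p(a,b,c)^+$ together with Lemma~\ref{Gn-props}(1) yields $p(a,b,c)\in V\cup\{1,\dots,k\}$.

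The remaining case $a=k\in\{1,\dots,n\}$ is then completed by induction on $k$, aiming to establish $a^-\subseteq p(a,b,c)^-$. Given $x\in a^-=V\cup\{1,\dots,k-1\}$, the strategy is to find a triple $(x',b'',c'')\to(a,b,c)$ with $p(x',b'',c'')=x$. Taking $x'=x$ with $b''=c''=y$ for any common predecessor $y$ of $b$ and $c$ does the job whenever such a $y$ exists, which covers every subcase except $b,c\in V$ with $b\ne c$. In that exceptional subcase the only candidates are $b''=\sigma^{-1}(b)$ and $c''=\sigma^{-1}(c)$, both in $V$; the identity $p(x,\sigma^{-1}(b),\sigma^{-1}(c))=x$ then follows from the $V$-case already established when $x\in V$, and from the inductive hypothesis when $x\in\{1,\dots,k-1\}$. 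With $a^-\subseteq p(a,b,c)^-$ in hand, Lemma~\ref{Gn-props}(4) delivers either $p(a,b,c)=a$ or $a\to p(a,b,c)$; the latter would force $p(a,b,c)\in a^+=\{k+1,\dots,n+1\}$, which is disjoint from the set $V\cup\{1,\dots,k\}$ into which $p(a,b,c)$ has already been confined.

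The main obstacle is precisely the absence of common predecessors for distinct elements $b,c\in V$, which prevents a verbatim transcription of the argument in Lemma~\ref{proj}. The induction on $k$, bootstrapped from the $a\in V$ case (which needs no predecessor analysis), is tailored to sidestep this obstruction.
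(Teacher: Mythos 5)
Your proof is correct and follows essentially the same route as the paper's: the two key moves --- using $n+1$ as a common successor of $b$ and $c$ to obtain $a^+\subseteq p(a,b,c)^+$, and inducting on the depth of $a$ with the predecessor triple $(\,\cdot\,,\sigma^{-1}(b),\sigma^{-1}(c))$ to handle the case of distinct $b,c\in V$ with no common predecessor --- are exactly those of the paper. The differences are organizational rather than conceptual: the paper splits cases on whether $\{b,c\}\subseteq V$ and closes the inductive step with a forced-loop contradiction (if $d\neq a$ then $d\ra a$ and $p(d,\sigma^{-1}(b),\sigma^{-1}(c))=d$ yields $d\ra d$), whereas you split on whether $a\in V$ and close by combining $a^+\subseteq d^+$ and $a^-\subseteq d^-$ with Lemma~\ref{Gn-props}(1) and (4); both endgames are valid.
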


\begin{proof}
  The proof will be broken into cases; $\{a,b,c\}\subseteq V\cup\{1,2,\dots,n\}$ is assumed throughout.

  \noindent \emph{Case 1.} Assume that $a,b,c\in V$ and let
  $d:=p(a,b,c)$. Since $p(a,b,c)\ra p(1,1,1)=1$ we have $d\in V$. Let $a'\in V$ be the
  unique element such that $a\ra a'$. Then $d=p(a,b,c)\ra
  p(a',1,1)=a'$ and therefore $d=a$.  \bigskip

  \noindent \emph{Case 2.} Assume that $\{ b,c \} \not\subseteq V$ and let $d=p(a,b,c)$.  Therefore $d=p(a,b,c)\ra
  p(a',n+1,n+1)=a'$, for all $a'\in a^+$ and so $a^+\subseteq d^+$.
  By Lemma~\ref{Gn-props}~(3) there is a $z\in V$ such that $z\ra b$
  and $z\ra c$. Therefore $a'=p(a',z,z)\ra p(a,b,c)=d$, for all $a'\in
  a^-$ and so $a^-\subseteq d^-$. As $G^{[0,n+1]}$ is nondismantlable we
  conclude that $a=d$.  \bigskip

  \noindent \emph{Case 3.}  Assume, $a\notin V$ and $b,c\in V$ and let
  $p(a,b,c) = d$. Since $a\notin V$ we have $\delta(a)\geq 1$.  As
  $d=p(a,b,c)\ra p(a',1,1)=a'$, for all $a'\in a^+$ we have
  $a^+\subseteq d^+$.  Applying Lemma~\ref{Gn-props}(1) gives
  $\delta(a)\geq\delta(d)$ and so either $a=d$ or $d\ra a$.  We prove
  by induction on $\delta(a)$ that $p(a,b,c)=a$.

  If $\delta(d) = 0$ and $a\neq d$, then $d\in V$, and by Case 1 we
  get $p(d,v,w) = d$, where $\{v\} = b^-\cap V$ and $\{w\} = c^-\cap
  V$.  It follows that $d = p(a,b,c)\la p(d,v,w) = d$. As $G$ has no
  loops, we obtain a contradiction.

  Now assume for some $k\leq n$ that $p(x,y,z)=x$ whenever $y,z\in V$ and $\delta(x)\leq k$ and consider $\delta(a) = k+1$. If $a\neq d$, then
  $\delta(d) < k+1$ and $d\ra a$. Now, the inductive hypothesis
  applies to $p(d,v,w)$ with $\{v\} = b^-\cap V$ and $\{w\} = c^-\cap
  V$, so $p(d,v,w) = d$. Thus, $d = p(a,b,c)\la p(d,v,w) = d$, a
  contradiction.
\end{proof}

\begin{proposition}\label{n-perm-Gn}
  Let $G$ be permutational and $n\in\mathbb{N}$. Then, $G^{[0,n]}$ is
  $(2n+2)$-permutable but not $(2n+1)$-permutable.
\end{proposition}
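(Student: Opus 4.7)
My plan is to prove the two inequalities separately, with the upper bound obtained from Theorem~\ref{n-perm} via a closure trick and the lower bound from an inductive strengthening of Lemma~\ref{pre-perm-proj}.

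For the upper bound I would use that any permutational $G$ is $2$-permutable via the Maltsev polymorphism described at the start of this section. Iterating Theorem~\ref{n-perm} exactly $n$ times then shows that $G^{[-n,n]}$ is $(2n+2)$-permutable, witnessed by idempotent ternary polymorphisms $r_0,\dots,r_{2n+2}$. The vertex $-1$ of $G^{[-n,n]}$ has out-neighbourhood $(-1)^+=V\cup\{1,\dots,n\}$, so Lemma~\ref{subalgebra} guarantees that $V\cup\{1,\dots,n\}$ is closed under each $r_i$. Since the induced subdigraph on $V\cup\{1,\dots,n\}$ is precisely $G^{[0,n]}$, the restricted operations $r_i|_{V\cup\{1,\dots,n\}}$ become polymorphisms of $G^{[0,n]}$ and the universal equations of $(2n+2)$-permutability carry across unchanged.

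For the lower bound I would suppose for contradiction that $G^{[0,n]}$ admits idempotent polymorphisms $q_0,\dots,q_{2n+1}$ witnessing $(2n+1)$-permutability, and prove by induction on $i\in\{1,\dots,n\}$ that $q_i$ acts as the first projection on $V\cup\{1,\dots,n-i\}$. The base case $i=1$ follows directly from Lemma~\ref{pre-perm-proj} applied on $G^{[0,n]}=G^{[0,(n-1)+1]}$ together with $q_1(x,y,y)=q_0(x,x,y)=x$. For the inductive step, once $q_i$ is known to be the first projection on $V\cup\{1,\dots,n-i\}$, the connecting equation forces $q_{i+1}(x,y,y)=x$ on this set; idempotence together with the fact that every vertex of $V\cup\{1,\dots,n-i\}$ has an edge to $n-i+1$ yields $q_{i+1}(a,b,c)\ra q_{i+1}(n-i+1,n-i+1,n-i+1)=n-i+1$, hence $q_{i+1}(a,b,c)\in (n-i+1)^-=V\cup\{1,\dots,n-i\}$ for all $a,b,c$ in the set. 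Thus $q_{i+1}$ restricts to a polymorphism of the induced subdigraph $G^{[0,n-i]}$ satisfying $q_{i+1}(x,y,y)=x$ there, and a second invocation of Lemma~\ref{pre-perm-proj} delivers $q_{i+1}$ as the first projection on $V\cup\{1,\dots,n-i-1\}$.

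A parallel induction, applied to the coordinate-reversed operations $\tilde q_j(x,y,z):=q_j(z,y,x)$ and initiated from $q_{2n}(x,x,y)=q_{2n+1}(x,y,y)=y$, shows analogously that $q_{2n+1-j}$ is the third projection on $V\cup\{1,\dots,n-j\}$ for each $j\in\{1,\dots,n\}$. Specialising to $i=n$ and $j=n$ identifies $q_n$ as the first and $q_{n+1}$ as the third projection on $V$, so that the connecting equation $q_n(x,x,y)=q_{n+1}(x,y,y)$ forces $x=y$ throughout $V$; since $G$ is a non-empty loopless permutational digraph we have $|V|\ge 2$, and the contradiction is secured. I expect the main subtlety to lie in the inductive step, specifically in verifying the closure condition $q_{i+1}\bigl((V\cup\{1,\dots,n-i\})^3\bigr)\subseteq V\cup\{1,\dots,n-i\}$ that allows Lemma~\ref{pre-perm-proj} to be reapplied on the smaller digraph $G^{[0,n-i]}$; as sketched, this closure is delivered by idempotence together with the role of $n-i+1$ as a total sink over the lower portion of the structure.
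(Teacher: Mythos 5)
Your proof is correct and rests on the same key ingredients as the paper's: the upper bound comes from iterating Theorem~\ref{n-perm} on $G^{[-n,n]}$ and restricting to the closed subset $(-1)^+$, and the lower bound uses Lemma~\ref{pre-perm-proj} to force the outer terms of a hypothetical chain to be projections until a contradiction appears on $G$. The only difference is organisational: the paper runs a single induction on $n$, peeling $p_1$ and $p_{2n+2}$ off the chain and descending to $G^{[0,n-1]}$ at each step, whereas you unroll that same descent into an induction on the chain index inside the fixed digraph $G^{[0,n]}$ (and you are somewhat more explicit about the closure of $V\cup\{1,\dots,n-i\}$ under the $q_{i+1}$ and about the coordinate-reversal needed for the third-projection half).
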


\begin{proof}
  The proof will be by induction on $n$. For $n = 0$ the claim follows
  since all permutational digraphs have a Maltsev
  polymorphism. Consider $G^{[0,n+1]}$ and suppose it is
  $(2n+3)$-permutable.  Let $p_1,\dots,p_{2n+2}$ be the polymorphisms
  witnessing that. In particular, $x = p_1(x,y,y)$ and
  $p_{2n+2}(x,x,y) = y$ hold. By Lemma~\ref{pre-perm-proj}, we have that $p_1$ and
  $p_{2n+2}$ are the first and the third projections on
  $G^{[0,n]}$ respectively. It follows that the polymorphisms
  $p_2,\dots,p_{2n+1}$ satisfy the conditions for
  $(2n+1)$-permutability of $G^{[0,n]}$. But $G^{[0,n]}$ is not
  $(2n+1)$-permutable by the inductive hypothesis, a contradiction.
\end{proof}
Proposition \ref{n-perm-Gn} and Theorem \ref{n-perm} combine to show that in general it is \emph{not true} that the length of permutability (that is, the parameter $n$ in $n$-permutability) must increase under one point extensions.
If $G$ is a nontrivial core and permutational (for example, if $G$ is a single directed cycle), then $G$ is congruence $2$-permutable, so that repeated applications of Theorem \ref{n-perm} shows that $G^{[-n,n]}$ is congruence $(2n+2)$-permutable but not $(2n+1)$-permutable.  Proposition \ref{n-perm-Gn} shows that $G^{[0,n]}$ is also congruence  $(2n+2)$-permutable but not $(2n+1)$-permutable, yet $G^{[-n,n]}$ is obtained from $G^{[0,n]}$ by a sequence of $n$ one-point extensions.

We now revisit our recurrent example of the directed cycles, refining part of the statement of Example \ref{eg:cycleSDjoin}.
\begin{eg}\label{eg:cyclenperm}
Let $n>2$ and $i\leq 0\leq j$.  Then $\mathbb{C}_n^{[i,j]}$ is congruence $(2\max\{-i,j\}+2)$-permutable but not $(2\max\{-i,j\}+1)$-permutable.
\end{eg}
\begin{proof}
For $n>2$ the digraph $\mathbb{C}_n$ is permutational, so that $\mathbb{C}_n^{[i,0]}$ is not congruence $(1-2i)$-permutable and $\mathbb{C}_n^{[0,j]}$ is not congruence $(2j+1)$-permutable.  The  congruence $(2\max\{-i,j\}+2)$-permutability is given in Example \ref{eg:cycleSDjoin}.
\end{proof}
\section{Congruence modularity and distributivity}\label{sec:congdist}
So far, all of the important polymorphism properties have been preserved under basic constructions, though in the case of congruence $n$-permutability, the precise length of permutability is in general unstable.  We now show that the remaining properties in Figure \ref{fig:maltsev} are in fact destroyed under one-point extensions. This is of particular interest because of the fact that some of these polymorphism properties have been proven to correspond to solvability by particular kinds of algorithms (thus these too are unstable under first order reductions).  Referring to Figure \ref{fig:maltsev}, we find that if $\CSP(\mathbb{A})$ lies in one of the the eight regions not on the top layer of the diagram, then one point extensions can in general push the corresponding CSP (which is first order equivalent to $\CSP(\mathbb{A})$) upwards to the top layer.  For example, a CSP satisfying polymorphisms for the class A (arithmetical) will in general lie at the node labeled by ``$\SD(\vee)$ and C$n$P'' after applying sufficiently many one-point extensions.
\begin{figure}[h]
\begin{small}
\begin{center}
  \xymatrix{ 
   &&&& *+\txt{$a'$}\ar@{->}[rrd] &&*+\txt{$a$}\ar@{->}[ll]\ar@{->}[d]\\
   &&&&  & & *+\txt{$1$}\ar@{->}[r] & *\txt{}
    \\
    &&&&*+\txt{$b'$}\ar@{->}[rru] &&*+\txt{$b$}\ar@{->}[ll]\ar@{->}[u]\\
}
\end{center}
\end{small}\caption{Disallowing congruence modularity; if $a^+\cap 1^- =\{ a'\}$ and
    $a'^-\cap 1^-=\{ a\}.$}\label{fig:CM}
\end{figure}
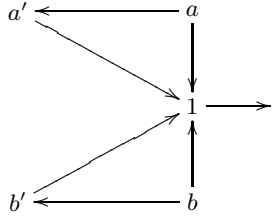

\begin{lemma}\label{no-Gumm}
  Let $G=(V,E)$ be a digraph. Assume that $G$ has
  pairwise distinct vertices $a,b,1$ such that
  \begin{enumerate}
  \item $1^+\neq \varnothing$,
  \item $\exists a'\in V$ such that $a^+\cap 1^- =\{ a'\}$ and
    $a'^-\cap 1^-=\{ a\}$,
  \item $b\in 1^{-}$ and $\exists b'\in 1^{-}$ such that $b'\in b^+$.  {\rm (}See
    Figure~\ref{fig:CM}.{\rm )}
  \end{enumerate}
  Then $G$ is not congruence modular.
\end{lemma}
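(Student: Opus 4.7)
We argue by contradiction. Assume $G$ admits Gumm polymorphisms $s_0,s_1,\ldots,s_{2n},p$ witnessing congruence modularity. The plan is to exploit the ``rigidity'' of the edge $a\to a'$ imposed by condition~(2), together with the witness edge $b\to b'$ from condition~(3) and the out-neighbour of~$1$ guaranteed by condition~(1), to force $p$ to satisfy incompatible equations.

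\emph{Step 1 (Subalgebra observation).} The set $1^-:=\{x\in V:x\to 1\}$ is primitive-positive definable using the single parameter~$1$, contains $a,a',b,b'$, but, because $G$ has no loops, does not contain~$1$ itself. By Lemma~\ref{subalgebra}, $1^-$ is closed under every idempotent polymorphism, so any value $s_i(u,v,w)$ or $p(u,v,w)$ at a triple from $1^-$ lies in~$1^-$. Condition~(2) then says that within the induced subdigraph on $1^-$ the edge $a\to a'$ is locally unique: $a^+\cap 1^-=\{a'\}$ and $a'^-\cap 1^-=\{a\}$.

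\emph{Step 2 (Propagation along the Gumm chain).} Consider the pairs $(u_i,v_i):=(s_i(a,b,b),\,s_i(a',b',b'))$ for $i=0,1,\dots,2n$. By Step~1 each $u_i,v_i\in 1^-$, and the adjacency $(a,b,b)\to(a',b',b')$ gives $u_i\to v_i$. The base cases $u_0=a$, $v_0=a'$ follow from $s_0$ being a projection, and the even Gumm equation $s_i(x,y,y)=s_{i+1}(x,y,y)$ collapses consecutive pairs at even~$i$. To bridge the odd steps one cannot apply the odd Gumm equation to $(a,b,b)$ directly; instead, the plan is to introduce auxiliary triples such as $(a,b,a)$ (where $s_i(a,b,a)=a$ is forced by $s_i(x,y,x)=x$) and $(a,a,b)$ (to which the odd equation does apply), and exploit the edges connecting these triples to $(a,b,b)$ inside~$1^-$. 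The rigidity from Step~1 then acts as a ratchet: whenever $u_i=a$ is known, $v_i\in a^+\cap 1^-$ forces $v_i=a'$, and whenever $v_i=a'$ is known, $u_i\in a'^-\cap 1^-$ forces $u_i=a$. By induction the pair $(u_i,v_i)=(a,a')$ propagates through the whole chain, so in particular $p(a,b,b)=s_{2n}(a,b,b)=a$.

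\emph{Step 3 (Contradiction).} On the other hand, the witness edge $b\to b'$ of condition~(3) gives the adjacency $(a,b,b)\to(a',b',b')\to(1,1,1)\to(c,c,c)$, where $c\in 1^+$ is supplied by condition~(1); and the triple $(a,b,b')$ is adjacent to $(x,1,1)$ for every $x\in a^+$. Combining these adjacencies with the Mal'tsev-like identity $p(x,x,y)=y$ applied at $(1,1,c)$ and at $(b,b,b')$, one derives an out-neighbour of $p(a,b,b)$ that cannot arise if $p(a,b,b)=a$, since by Step~1 such an out-neighbour would have to lie in $a^+\cap 1^-=\{a'\}$, yet the equations force a second, distinct value. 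This contradicts Step~2 and establishes the lemma.

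The principal obstacle is Step~2: the alternation of the Gumm chain between $(x,y,y)$- and $(x,x,y)$-equations means the uniqueness of $a\to a'$ cannot be invoked mechanically, and we must thread through auxiliary triples to propagate $(u_i,v_i)=(a,a')$. Once this is in hand, the contradiction in Step~3 is a direct adjacency-chasing argument using conditions~(1) and~(3).
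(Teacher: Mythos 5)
Your overall strategy matches the paper's: assume Gumm polymorphisms, use the rigidity $a^+\cap 1^-=\{a'\}$, $a'^-\cap 1^-=\{a\}$ to force projection-like behaviour down the chain, then contradict via $p(x,x,y)=y$. But Step 2, which you yourself flag as the principal obstacle, is left unresolved, and the device you would need is not among the ones you propose. Your pivot $v_i=s_i(a',b',b')$ cannot bridge the odd Gumm steps, because the auxiliary triple $(a,a,b)$ (the one to which the odd equation $s_i(x,x,y)=s_{i+1}(x,x,y)$ applies) is \emph{not} adjacent to $(a',b',b')$: that would require $a\to b'$, which forces $b'\in a^+\cap 1^-=\{a'\}$ and is not given. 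The triple that works is $(a',1,b')$, which receives edges from both $(a,a,b)$ and $(a,b,b)$; since it contains the vertex $1\notin 1^-$, its image under $s_i$ is not controlled by the subalgebra observation, and one must instead use condition (1): picking $2\in 1^+$, the adjacency $(a',1,b')\to(1,2,1)$ together with $s_i(x,y,x)=x$ gives $s_i(a',1,b')\in 1^-$. You never identify where condition (1) enters the propagation, which is a sign the mechanism is missing. With this pivot the induction runs by carrying \emph{both} $s_i(a,a,b)=a$ and $s_i(a,b,b)=a$: the odd (resp.\ even) Gumm equation advances one of the two, the pivot pins $s_{i+1}(a',1,b')=a'$, and then $s_{i+1}(a,b,b),s_{i+1}(a,a,b)\in a'^-\cap 1^-=\{a\}$ recovers the other.

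Step 3 is also not a correct derivation of the contradiction. From $p(a,b,b)=a$ and $p(1,1,b')=b'$ one gets $b'\in a^+\cap 1^-=\{a'\}$, i.e.\ $b'=a'$ --- but this is \emph{not} a contradiction, since the hypotheses do not require $b'\neq a'$; your claim that "the equations force a second, distinct value" in $a^+\cap 1^-$ does not hold. The contradiction instead comes from the triple $(a,a,b)$, which you do not use here: having forced $b'=a'$, the adjacencies $(a,a,b)\to(1,1,a')$ and $(a,a,b)\to(1,1,1)$ together with $p(x,x,y)=y$ give $p(a,a,b)\in a'^-\cap 1^-=\{a\}$, while the same identity gives $p(a,a,b)=b\neq a$. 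As written, your proposal is a plausible plan whose two essential steps both fail in their stated form.
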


\begin{proof}
Suppose for contradiction that $s_0(x,y,z),\dots,s_{2n}(x,y,z),p(x,y,z)$ are polymorphisms witnessing the Gumm equations required for congruence modularity condition.

Let $2\in 1^+$.  For $i=0,\dots,2n$ we have $s_i(a',1,b')\ra s_i(1,2,1)=1$, and therefore $s_i(a',1,b')\in 1^{-}$.  Next we show that $s_i(a,a,b)=s_i(a,b,b)=a$ for all $i$.  This is trivially true at $i=0$, so assume we have established it up to some $i\leq 2n$.  We consider the case of $i$ odd, but the case of $i$ even is almost identical.  When $i$ is odd, $a=s_i(a,a,b)=s_{i+1}(a,a,b)$ using the induction hypothesis and the Gumm equalities.  Then $a=s_{i+1}(a,a,b)\ra s_{i+1}(a',1,b')$, so that $s_{i+1}(a',1,b')\in a^{+}\cap 1^{-}=\{a'\}$.  Then $s_{i+1}(a,b,b)\ra s_{i+1}(a',1,b')=a'$ and $s_{i+1}(a,b,b)\ra s_{i+1}(1,1,1)=1$ so that $s_{i+1}(a,b,b)\in a'^{-}\cap 1^{-}=\{a\}$, giving $s_{i+1}(a,b,b)=a$.  Similarly, $s_{i+1}(a,a,b)\ra s_{i+1}(a',1,b')=a'$ and $s_{i+1}(a,a,b)\ra s_{i+1}(1,1,1)=1$, also giving $s_{i+1}(a,a,b)=a$, as required.

We have shown that $a=s_{2n}(a,b,b)$ and then the Gumm equalities show that $a=s_{2n}(a,b,b)=p(a,b,b)$ and $p(a,a,b)=b$.  But $a=p(a,b,b)\ra p(1,1,b')=b'$ so that $b'\in a^{+}\cap 1^{-}=\{a'\}$.  So $a'=b'$.
Then $p(a,a,b)\ra p(1,1,a')=a'$ (using $p(x,x,y)=y$ for the equality) and $p(a,a,b)\ra p(1,1,1)=1$.  So $p(a,a,b)\in a'^{-}\cap 1^{-}=\{a\}$.  But $p(a,a,b)=b$ using equality $p(x,x,y)=y$.  This contradicts $a\neq b$.
\end{proof}
The following example shows that Lemma \ref{no-Gumm} is quite widely applicable.
\begin{eg}\label{eg:CMbreak}
Let $G$ be a digraph containing edges $(u_1,v_1)$ and $(u_2,v_2)$, and with $u_1\neq u_2$.  If $u_1^+=\{v_1\}$ and $v_1^-=\{u_1\}$ then $G^{\top\top}$ is not congruence modular.
\end{eg}
\begin{proof}
Apply Lemma \ref{no-Gumm}: let  $a:=u_1$, $a':=v_1$ and let $b:=u_2$ and $b':=v_2$.
\end{proof}

\begin{eg}\label{eg:cyclemod}
Let $n>1$ and $i\leq 0\leq j$.  If $\max\{-i,j\}>1$ then $\mathbb{C}_n^{[i,j]}$ is not congruence modular.
\end{eg}
\begin{proof}
Without loss of generality, assume that $j>1$. 
 We apply Lemma~\ref{no-Gumm} to $\mathbb{C}_n^{[0,j]}$, for if $i< 0$ and $\mathbb{C}_n^{[i,j]}$ were congruence modular, then using vertex $v:=-1$ we may apply  Lemma \ref{subalgebra} to deduce that $\mathbb{C}_n^{[0,j]}$ is congruence modular.  Let $a$ and~$b$ be two distinct vertices of $\mathbb{C}_n$, and let $a'$ and $b'$ in $\mathbb{C}_n$ be such that $a\ra a'$ and $b\ra b'$.  Lemma \ref{no-Gumm} now applies, using the same notation $a,b\in \mathbb{C}_n$ and 
with the assumption $j>1$ allowing vertex $1$ of $\mathbb{C}_n^{[i,j]}$ to play its stated role in Lemma~\ref{no-Gumm}.
\end{proof}
This enables a precise placement of $\CSP(\mathbb{C}_m^{[i,j]})$ in Figure \ref{fig:maltsev}, at least where $\max\{-i,j\}>1$ and $m>1$ (recall that $m=1$ produces a transitive tournament): there are polymorphisms witnessing $\SD(\vee)$ and C$n$P  (by Example \ref{eg:cycleSDjoin}), but not congruence modularity, so it lies at the node labelled by ``$\SD(\wedge)$ and C$n$P'' (where the precise value of $n$ is $2\max\{-i,j\}+2$, using Example \ref{eg:cyclenperm}).
\begin{eg}\label{eg:cycletopbot}
Let $n>1$.  Then $\mathbb{C}_n^{\top\bot}$ has majority polymorphisms \up(that is, ternary NU polymorphisms\up).
\end{eg}
\begin{proof}
 Recall that $\mathbb{C}_n$ has a majority polymorphism $m$, as defined Example \ref{eg:cycle}.  We now extend this to a majority polymorphism $m'$ on $\mathbb{C}_n^{\top\bot}$.  A \emph{majority configuration} is a 3-tuple $(a,b,c)$ with $|\{a,b,c\}|<3$, and the majority value, denoted $\operatorname{maj}(a,b,c)$ is the value amongst $a,b,c$ that is repeated.
 \[
 m'(a,b,c):=\begin{cases}
 m(a,b,c)&\text{ if }\{a,b,c\}\subseteq C_n,\\
 \operatorname{maj}(a,b,c)&\text{ if }(a,b,c)\text{ is a majority tuple},\\
 \top&\text{ if $|\{a,b,c\}|=3$ and $\{a,b,c\}\not\subseteq C_n$ and $\bot\notin\{a,b,c\}$},\\
 \bot&\text{ if $|\{a,b,c\}|=3$ and $\bot\in \{a,b,c\}$}.
 \end{cases}
 \]
 The majority equations $m(y,x,x)=m(x,y,x)=m(x,x,y)=x$ hold by definition, so it remains to show that $m'$ is a polymorphism.  All cases are basically trivial, except for majority configurations containing $\top$ (or $\bot$, but not both: these are isolated tuples), but where $\top$ (or $\bot$, respectively) is not the majority.  As an example, consider an adjacency $(a,b,c)\ra (\top,d,d)$.  If $m'(a,b,c)=\bot$ or if $(a,b,c)$ is a majority configuration, we are done.  Otherwise, $\{a,b,c\}\subseteq C_n$ and there is $a'\in C_n$ with $(a,b,c)\ra (a',d,d)$.  Then $m'(a,b,c)=m(a,b,c)\ra m(a',d,d)=d=m'(\top,d,d)$.  
\end{proof}
This enables placement of the cases $\mathbb{C}_n^{[0,1]}$, $\mathbb{C}_n^{[-1,0]}$ and $\mathbb{C}_n^{[-1,1]}$ missed by Example~\ref{eg:cyclemod}.  These are all congruence distributive and congruence $4$-permutable, but not congruence $3$-permutable.

Mar\'oti and Z\'adori~\cite{MZ} showed that congruence modularity
implies congruence distributivity (indeed they showed that congruence
modularity implies the existence of a near-unanimity polymorphism) for
reflexive digraphs. The following result gives another class of
digraphs in which congruence modularity implies congruence
distributivity.

\begin{theorem}\label{CM-is-CD}
  Let $G$ be a digraph. The following are equivalent\up:
  \begin{enumerate}
  \item $G^{\top\bot}$ is congruence modular.
  \item $G^{\top\bot}$ is congruence distributive.
  \end{enumerate}
\end{theorem}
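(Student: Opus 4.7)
The implication $(2) \Rightarrow (1)$ is a classical fact of universal algebra: every congruence distributive variety is congruence modular. From Jónsson terms $J_0, \ldots, J_m$ witnessing CD one obtains Gumm terms by setting $s_i := J_i$ (padding with projections to match the parity of $2n$) and letting $p(x,y,z) := z$, which trivially satisfies $p(x,x,y) = y$ and agrees with $s_{2n}$ at $(x,y,y)$ after suitable padding. So the substantive content of the theorem is in the direction $(1) \Rightarrow (2)$.

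My plan for $(1) \Rightarrow (2)$ is to construct a ternary near-unanimity (majority) polymorphism on $G^{\top\bot}$; by Barto's theorem \cite{bar:NU}, the existence of such a polymorphism on a digraph is equivalent to congruence distributivity of its polymorphism algebra. Assume Gumm terms $s_0, \ldots, s_{2n}, p$ witness CM on $G^{\top\bot}$. The first observation is that $p(a,b,c) \in V(G)$ whenever $a,b,c \in V(G)$, since $p(a,b,c) \to p(\top,\top,\top) = \top$ rules out the value $\top$, and $\bot = p(\bot,\bot,\bot) \to p(a,b,c)$ rules out the value $\bot$; the same restriction holds for each $s_i$.

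The second step is to derive finer structural constraints on $p$ for tuples containing $\top$ or $\bot$, in the spirit of Lemma \ref{proj} and Lemma \ref{no-Gumm}. Using the adjacencies $a \to \top$ for $a \ne \top$ and $\bot \to c$ for $c \ne \bot$, in conjunction with the identities $p(x,x,y) = y$ and $s_{2n}(x,y,y) = p(x,y,y)$, I expect to pin down values such as $p(a,\top,c)$ and $p(a,\bot,c)$ enough to build the candidate majority polymorphism. The candidate $m(x,y,z)$ would be defined piecewise in the style of Example \ref{eg:cycletopbot}: a derived term from the Gumm chain on $V(G)^3$, the natural majority value on any configuration with a repeated entry, and sending the remaining ``fully distinct'' tuples to $\top$ or $\bot$ according to whether $\bot$ is absent or present among the inputs.

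The main obstacle is verifying that $m$ is actually a homomorphism. Edge-preservation on mixed tuples is a routine case analysis of the type carried out in Example \ref{eg:cycletopbot}. The delicate step is arguing that the chosen restriction of $m$ to $V(G)^3$ simultaneously preserves $G$-edges and satisfies the majority identities $m(y,x,x) = m(x,y,x) = m(x,x,y) = x$. This forces us to combine the Gumm identities with the $\top$/$\bot$ constraints of the previous step to produce, on $V(G)^3$, a ternary term behaving like a majority---essentially absorbing the ``Maltsev-like'' $p$ into the $s_i$ chain using the absorbing behaviour of $\top$ and $\bot$. I expect this absorption argument, rather than the boundary edge-checking, to be where the main work of the proof lies.
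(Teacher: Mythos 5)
There is a genuine gap: your proof of $(1)\Rightarrow(2)$ is a plan whose central step is never carried out. You propose to build a ternary near-unanimity (majority) polymorphism on $G^{\top\bot}$, but the key ingredient --- ``a derived term from the Gumm chain on $V(G)^3$'' that ``behaves like a majority'' --- is exactly the part you defer, and there is no reason to expect it to exist. Congruence modularity (indeed even congruence distributivity) of $G^{\top\bot}$ does not yield a \emph{ternary} NU in general; Barto's theorem \cite{bar:NU} only guarantees an NU term of \emph{some} arity from congruence distributivity, so your stated equivalence ``ternary NU $\Leftrightarrow$ CD'' overstates that result, and your target is strictly stronger than the theorem being proved. (Compare Theorem~\ref{thm:collapse}(2), which concludes only ``an NU term'' from Gumm terms.) Restricting $p$ and the $s_i$ to $V(G)$ is correct but does not by itself produce a majority operation on $V(G)^3$; a permutational digraph, for instance, has a Maltsev polymorphism yet admits no commutative binary polymorphism in the even-cycle case, which illustrates how little symmetry the Gumm chain forces on $V(G)^3$.

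The paper's actual argument sidesteps all of this: it never constructs an NU. Starting from Gumm polymorphisms $s_1,\dots,s_{2m},q$ on $G^{\top\bot}$, it replaces only the last term by
\[
q'(x,y,z)=\begin{cases} q(x,y,z) & \text{if } x\neq z,\\ x & \text{if } x=z,\end{cases}
\]
so that $q'(x,y,x)=x$ while $q'(x,y,y)=q(x,y,y)$ and $q'(x,x,y)=y$ are retained. The only work is checking that $q'$ is still a polymorphism, which uses the neighbourhood-absorption of $\top$ and $\bot$ via Lemma~\ref{proj}: if $x\to a$, $y\to b$, $z\to c$ with $a=c$ and $x\neq z$, then $z^+\subseteq q(x,y,z)^+$ forces $q(x,y,z)\to a$. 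Then $s_1,\dots,s_{2m},q'$ are J\'onsson terms, giving congruence distributivity directly. If you want to salvage your route, you would have to either prove the existence of a ternary majority on $G^{\top\bot}$ (which the paper only establishes for special cases such as $\mathbb{C}_n^{\top\bot}$ in Example~\ref{eg:cycletopbot}) or retreat to producing J\'onsson terms, at which point you are led back to the modification of $q$ above.
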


\begin{proof}
  Clearly (2) implies (1). For the converse, suppose $G^{\top\bot}$ is congruence
  modular. Then $G^{\top\bot}$ has (ternary) Gumm polymorphisms, say,
  $s_1,\dots,s_{2m},q$.  Define $q'$ on $G^{\top\bot}$  as follows: 
  \[
  q'(x,y,z) =
  \begin{cases}
    q(x,y,z) & \text{ if } x\neq z\\
    x & \text{ if } x = z.
  \end{cases}
  \]
  Clearly $q'$ satisfies the equations $q'(x,x,y)=y$, $q'(x,y,x)=x$
  and $q'(x,y,y)=q(x,y,y)$. We now show that $q'$ is a polymorphism.
  Consider the configuration $x\ra a$, $y\ra b$, $z\ra c$ and suppose
  $q'(x,y,z)\not\ra q'(a,b,c)$. There are only two cases in which this
  could happen. Assume first that $a=c$ and $x\neq z$. Thus,
  $q'(a,b,c) = q'(a,b,a) = a$ and $q'(x,y,z) = q(x,y,z)$, and it is
  also clear that $\top\notin\{x,y,z\}$ and
  $\bot\notin\{a,b\}$. Letting $u: = q(x,y,z)$ and applying
  Lemma~\ref{proj}(1), we obtain $z^+\subseteq u^+$. But, $z\ra c =
  a$, so $a\in z^+$, hence $a\in u^+$.  Therefore, $u\ra a$,
  contradicting $q'(x,y,z)\not\ra q'(a,b,c)$.  For the second case,
  with $a\neq c$ and $x=z$, the proof is analogous, using
  Lemma~\ref{proj}(2).

  By replacing $q$ by $q'$, we obtain polymorphisms
  $s_1,\dots, s_{2m}, q'$, such that $q'(x,y,x) = x$ is satisfied.
  Since the equation $s_{2m}(x,y,y) = q(x,y,y)$ holds, by definition
  of $q'$ we obtain $s_{2m}(x,y,y) = q'(x,y,y)$. 
  This shows that
  $s_1,\dots,s_{2m},q'$ are J\'onsson terms on $G^{\top\bot}$.  (Technically, the Gumm terms must be extended at the start by a further projection term to precisely match the J\'onsson term equations.)
\end{proof}
The following result combines the main result of \cite{BDJN,BDJN2} with Theorem \ref{CM-is-CD} and Theorem \ref{n-perm}.

\begin{thm}\label{thm:collapse}
Fix any $k\in \mathbb{N}$. For every relational structure $\mathbb{A}$ there is a finite digraph $\mathcal{D}_k(\mathbb{A})$ \up(first order definable on a subset of a power $\mathbb{A}$\up) such that the following hold.
\begin{enumerate}
\item $\CSP(\mathbb{A})$ and $\CSP(\mathcal{D}_k(\mathbb{A}))$ are equivalent under logspace reductions.
\item if $\mathcal{D}_k(\mathbb{A})$ has Gumm terms then it has an NU term.
\item if $\mathcal{D}_k(\mathbb{A})$ is congruence $2k$-permutable then it has a majority term.
\end{enumerate}
\end{thm}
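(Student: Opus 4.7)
The plan is to define $\mathcal{D}_k(\mathbb{A}) := \mathcal{D}(\mathbb{A})^{[-k,k]}$, where $\mathcal{D}(\mathbb{A})$ is the digraph construction of \cite{BDJN,BDJN2}. Without loss we may assume $\mathcal{D}(\mathbb{A})$ is a core, so nondismantlable by Lemma~\ref{P-for-cores}; this nondismantlability is then preserved under one-point extensions (as noted in Section~\ref{sec:npermtopbot}). Item~(1) will follow by composing the logspace equivalence of $\CSP(\mathbb{A})$ and $\CSP(\mathcal{D}(\mathbb{A}))$ established in \cite{BDJN,BDJN2} with $2k$ iterated applications of Proposition~\ref{first-order-equivalent}, since $\mathcal{D}(\mathbb{A})^{[-k,k]}$ arises from $\mathcal{D}(\mathbb{A})$ via $k$ applications of $(\cdot)^\bot$ and $k$ applications of $(\cdot)^\top$; first order reductions are a special case of logspace reductions and compose.

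For item~(2), I would write $\mathcal{D}_k(\mathbb{A}) = H^{\top\bot}$ with $H := \mathcal{D}(\mathbb{A})^{[-(k-1),k-1]}$. If $\mathcal{D}_k(\mathbb{A})$ admits Gumm polymorphisms, Theorem~\ref{CM-is-CD} upgrades them to J\'onsson polymorphisms, so the polymorphism algebra of $\mathcal{D}_k(\mathbb{A})$ generates a congruence distributive variety. As $\mathcal{D}_k(\mathbb{A})$ is a finite digraph (a single binary relation), Barto's theorem \cite{bar:NU} then delivers a near unanimity polymorphism, giving the claim.

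For item~(3), suppose $\mathcal{D}_k(\mathbb{A}) = \mathcal{D}(\mathbb{A})^{[-k,k]}$ is congruence $2k$-permutable. I would iterate the backward (nondismantlable) direction of Theorem~\ref{n-perm} a total of $k-1$ times---valid because each $\mathcal{D}(\mathbb{A})^{[-j,j]}$ remains nondismantlable---to peel off two-sided extensions one at a time, lowering the degree of permutability by $2$ at each step, and conclude that $\mathcal{D}(\mathbb{A})^{\bot\top}$ is $2$-permutable. The remark following Theorem~\ref{n-perm} then produces a Maltsev polymorphism on $\mathcal{D}(\mathbb{A})$, and Kazda's theorem \cite{kaz} upgrades this to a majority polymorphism $m$ on $\mathcal{D}(\mathbb{A})$. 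To finish, I would lift $m$ to a majority polymorphism $M$ on $\mathcal{D}_k(\mathbb{A})$: set $M$ equal to $m$ on triples wholly in $\mathcal{D}(\mathbb{A})^3$, let $M$ return the repeated value on every majority configuration, and on a pairwise distinct triple $(a,b,c)$ not wholly in $\mathcal{D}(\mathbb{A})$ let $M(a,b,c)$ be the entry of smallest index (the one closest to the total source $-k$). Checking that $M$ preserves edges is a routine case analysis, generalising the verification carried out for $\mathbb{C}_n^{\top\bot}$ in Example~\ref{eg:cycletopbot}.

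The main obstacle is the explicit lifting step in item~(3). The majority identities $M(y,x,x) = M(x,y,x) = M(x,x,y) = x$ are not balanced (the variable set differs between the two sides), so Proposition~\ref{pro:balanced} does not transfer the majority from $\mathcal{D}(\mathbb{A})$ to $\mathcal{D}_k(\mathbb{A})$ for free. An ad hoc extension must therefore be defined and painstakingly verified against the many new edges contributed by the $2k$ extension vertices: adjacencies between a pure-$\mathcal{D}(\mathbb{A})$ triple and a mixed triple, between two mixed triples at different depths, and so on. The template given by Example~\ref{eg:cycletopbot} handles the $k=1$ case, and the same recipe generalises, but the number of mixed configurations grows with $k$ and so the bookkeeping is the most technical part of the argument.
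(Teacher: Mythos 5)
Your treatment of items (1) and (2) coincides with the paper's: set $\mathcal{D}_k(\mathbb{A})=\mathcal{D}(\mathbb{A})^{[-k,k]}$, obtain (1) from the logspace equivalence of \cite{BDJN,BDJN2} composed with iterated applications of Proposition~\ref{first-order-equivalent}, and obtain (2) from Theorem~\ref{CM-is-CD} followed by Barto's theorem \cite{bar:NU}. (The paper gets nondismantlability directly from the $\mathcal{D}(\mathbb{A})$ construction rather than by passing to a core, but that difference is harmless.)

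Item (3), however, contains a genuine gap, and it is exactly the step you flag as ``the most technical part'': lifting a majority polymorphism from $\mathcal{D}(\mathbb{A})$ to $\mathcal{D}_k(\mathbb{A})$ is not merely fiddly bookkeeping --- it is impossible in general. Majority (indeed any NU) polymorphisms are not preserved under iterated one-point extensions: by Example~\ref{eg:cycle} the cycle $\mathbb{C}_n$ ($n>1$) is a nondismantlable core with a majority polymorphism, yet by Example~\ref{eg:cyclemod} the digraph $\mathbb{C}_n^{[-k,k]}$ with $k\geq 2$ is not even congruence modular, so admits no NU polymorphism of any arity. This is one of the central points of Section~\ref{sec:congdist}, and it rules out any extension recipe of the kind you propose once $k\geq 2$; your particular rule is moreover ill-defined, since for a pairwise distinct triple such as $(a,b,1)$ with $a,b$ both in $\mathcal{D}(\mathbb{A})$ there is no ``entry closest to the total source.'' The escape is that you stopped the permutability descent one step too early. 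Having concluded that $\mathcal{D}(\mathbb{A})^{\bot\top}$ is $2$-permutable, pad to $3$-permutability and apply the backward direction of Theorem~\ref{n-perm} once more with $n=1$: since $\mathcal{D}(\mathbb{A})$ is nondismantlable it is $1$-permutable, i.e.\ a one-element digraph, under the convention stated before that theorem. Hence $\mathcal{D}_k(\mathbb{A})\cong\mathbb{T}_{2k+1}$, which has a majority polymorphism by the explicit middle-value operation of Example~\ref{eg:transtournament}. This is the paper's argument; no lifting, and no appeal to Kazda's theorem, is needed.
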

\begin{proof}
In \cite{BDJN, BDJN2} it is shown how to construct a digraph $\mathcal{D}(\mathbb{A})$ such that $\CSP(\mathbb{A})$ and $\CSP(\mathcal{D}(\mathbb{A}))$ are equivalent under logspace reductions.  While the $\mathcal{D}(\mathbb{A})$ construction is slightly technical, it is trivially verified to be nondismantlable.  Let $\mathcal{D}_k(\mathbb{A})$ be $\mathcal{D}(\mathbb{A})^{[-k,k]}$.  If $\mathcal{D}_k(\mathbb{A})$ has Gumm terms then by Theorem \ref{CM-is-CD} it has J\'onsson terms.  Then by Barto \cite{bar:NU} it has an NU term.  Now assume that $\mathcal{D}_k(\mathbb{A})$ has terms witnessing $2k$-permutability.  By Theorem \ref{n-perm} it follows that $\mathcal{D}(\mathbb{A})$ is trivial.  Then $\mathcal{D}_k(\mathbb{A})$ is a transitive tournament so has a majority polymorphism (see Example \ref{eg:transtournament}).
\end{proof}
In particular, when $k=1$ the proof of Theorem \ref{thm:collapse} shows that the class of digraphs with a total source and total sink exhibits restricted polymorphism and algorithmic behaviour: for instance, any problem solvable by the few subpowers algorithm is already solvable by local consistency check algorithm.  However every fixed finite template CSP is logspace equivalent to a CSP over a digraph from this class.

\section{Semicomplete digraphs}\label{sec:precomplete}
 As an 
 illustrative consequence of the various results above, we extract a characterisation of the possible computational complexity and polymorphism properties of \emph{semicomplete} digraphs, in the sense of \cite{BJHM}: finite digraphs for which the symmetric closure of the
 edge relation produces a complete graph.  Equivalently, a digraph is semicomplete if for
 every pair of distinct vertices $u,v$, at least one of $(u,v)$ and
 $(v,u)$ is an edge.  Tournaments are perhaps the most commonly encountered instance of a
 semicomplete digraph.  
 
 A simple classification of the tractable CSPs over semicomplete digraphs is given in \cite{BJHM} and has been extended to the broader class of \emph{locally semicomplete digraphs} by Bang-Jensen, MacGillivray and Swarts in \cite{BJMS}.  We now provide a fine level characterisation of complexity classes and polymorphism properties in the semicomplete case; it would be interesting to see this extended to the locally semi-complete digraphs of \cite{BJMS}.

 We make essential use the \emph{no-sources and
   sinks} Theorem of Barto, Kozik and Niven \cite{BKN}, which states
 that the CSP over a core digraph with no sources and sinks is
 \texttt{NP}-complete unless it is a disjoint union of directed cycles
 (in which case it is tractable, and moreover has strict width).

 A semicomplete digraph is always a core, and there can be at most one
 source and at most one sink: indeed a source in a semicomplete
 digraph is a dominating vertex for the entire digraph, and dually for
 a sink.  This means that for a given a semicomplete digraph
 $G=(V,E)$ with a source (or sink) $s$, the set $s^+$ (or $s^-$) is
 equal to $V\setminus \{s\}$.

\begin{thm}
The algebraic dichotomy conjecture holds for semicomplete digraphs.
\begin{enumerate}
\item If a semicomplete digraph $G$ is \emph{not} one of $\mathbb{T}_k$, $\mathbb{C}_2^{[i,j]}$ or $\mathbb{C}_3^{[i,j]}$ for some $i\leq0\leq j$, then $G$ does not have weak NU polymorphisms and $\CSP(G)$ is \NP-complete.
\item $\CSP(\mathbb{T}_k)$ is first order definable, so solvable within \Ll.  The digraph $\mathbb{T}_k$ has a majority polymorphism, as well as a $2$-semilattice polymorphism and is congruence $k$-permutable but not congruence $k-1$ permutable.
\item $\CSP(\mathbb{C}_2^{[i,j]})$ is \Ll-complete.  The digraph $\mathbb{C}_2^{[i,j]}$ is $\SD(\vee)$ and is congruence $(2\max\{-i,j\}+2)$-permutable but not $(2\max\{-i,j\}+1)$-permutable.  It does not have any commutative binary polymorphism.
\item $\CSP(\mathbb{C}_3^{[i,j]})$ is \Ll-complete.  The digraph $\mathbb{C}_3^{[i,j]}$ is $\SD(\vee)$ and is congruence $(2\max\{-i,j\}+2)$-permutable but not $(2\max\{-i,j\}+1)$-permutable.  It has a $2$-semilattice polymorphism.
\item If $\max\{-i,j\}\leq 1$ then $\mathbb{C}_2^{[i,j]}$ and $\mathbb{C}_3^{[i,j]}$ have majority polymorphisms so are congruence distributive.
\item If $\max\{-i,j\}>1$ then $\mathbb{C}_2^{[i,j]}$ and $\mathbb{C}_3^{[i,j]}$ are not congruence modular.
\end{enumerate}
\end{thm}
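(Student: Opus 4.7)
The plan is to reduce the classification to the ``middle part'' of the digraph and then combine the Barto--Kozik--Niven no-sources-and-sinks theorem with the polymorphism-stability results already established in the paper.

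First I would observe that any semicomplete digraph $G$ has at most one source and at most one sink: a source in a semicomplete digraph is necessarily a total source, and two total sources could not be adjacent. Iteratively stripping the (unique) source, then the (unique) sink, and repeating, therefore yields a canonical decomposition $G = M^{[i,j]}$ in which $M$ is semicomplete and has neither source nor sink. The Barto--Kozik--Niven theorem then leaves two possibilities for $M$: either $\CSP(M)$ is $\NP$-complete (equivalently, $M$ has no weak NU polymorphism), or $M$ is a disjoint union of directed cycles. Semicompleteness forces $M$ to be connected, and semicompleteness of $\mathbb{C}_k$ fails for $k \geq 4$ since the vertices $0$ and $2$ are then non-adjacent; thus in the tractable case $M \in \{\mathbb{C}_1, \mathbb{C}_2, \mathbb{C}_3\}$, yielding precisely the three families $\mathbb{T}_k$, $\mathbb{C}_2^{[i,j]}$, $\mathbb{C}_3^{[i,j]}$ asserted in the statement.

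For part~(1) I would invoke Corollary~\ref{cor:balanced}: the Taylor (weak NU) property is defined by balanced equations and hence transfers in both directions between $M$ and $M^{[i,j]}$. So if $M$ has no weak NU polymorphism, neither does $G$, and Larose--Tesson-style hardness then gives $\NP$-completeness of $\CSP(G)$; alternatively, iterating Proposition~\ref{first-order-equivalent} yields $\CSP(G) \equiv_{\fo} \CSP(M)$ directly. For parts~(2)--(6) I would stitch together information from the earlier examples. Part~(2) is Example~\ref{eg:transtournament}, together with the fact that $\CSP(\mathbb{T}_k)$ is first-order definable (non-membership is the existence of a directed path of length $k$), giving $\Ll$-membership, and with the observation that $\min$ is a $2$-semilattice polymorphism of $\mathbb{T}_k$. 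Parts~(3) and~(4) combine Examples~\ref{eg:cycleSDjoin} and~\ref{eg:cyclenperm} for the $\SD(\vee)$ and $n$-permutability statements; Example~\ref{eg:cycle} together with Corollary~\ref{cor:balanced} then transfers the dichotomy between ``commutative binary / $2$-semilattice polymorphism'' and ``no commutative binary polymorphism'' from $\mathbb{C}_n$ to $\mathbb{C}_n^{[i,j]}$ (both equational conditions being balanced). Part~(6) is Example~\ref{eg:cyclemod}, and part~(5) is Example~\ref{eg:cycletopbot}; the one-sided subcases $\mathbb{C}_n^\top$ and $\mathbb{C}_n^\bot$ are handled by restricting the majority polymorphism produced in Example~\ref{eg:cycletopbot} to the appropriate induced subdigraph (the constructed $m'$ never outputs $\bot$ unless $\bot$ appears in the input, and dually).

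Finally, for $\Ll$-completeness in parts~(3) and~(4), iterating Proposition~\ref{first-order-equivalent} gives first-order equivalence $\CSP(\mathbb{C}_n^{[i,j]}) \equiv_{\fo} \CSP(\mathbb{C}_n)$, reducing the question to the base cases $n \in \{2,3\}$, which are classically $\Ll$-complete (the $n=2$ case is undirected bipartiteness; in general $\CSP(\mathbb{C}_n)$ amounts to checking consistency modulo $n$ of path lengths across weakly connected components, solvable in deterministic logspace with hardness coming from undirected reachability). The main obstacle is essentially bookkeeping rather than any deep new step: one must verify that each of the many polymorphism assertions in the statement transfers in the direction required (upward via balanced equations, sideways via the dedicated $\SD(\vee)$, Hobby--McKenzie and congruence $n$-permutability transfer results of Sections~\ref{sec:unionspoly}--\ref{sec:congdist}), and that the boundary $\max\{-i,j\}=1$ separating parts~(5) and~(6) really is sharp on both sides.
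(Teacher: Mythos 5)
Your proposal is correct and follows essentially the same route as the paper: strip the (necessarily total) source and sink of a semicomplete digraph to reach a semicomplete core $M$ with no sources or sinks, apply the Barto--Kozik--Niven theorem to force $M\in\{\mathbb{C}_1,\mathbb{C}_2,\mathbb{C}_3\}$ in the tractable case, and then assemble parts (2)--(6) from Examples \ref{eg:transtournament}, \ref{eg:cycle2sl}, \ref{eg:cyclenperm}, \ref{eg:cyclemod} and \ref{eg:cycletopbot} together with Proposition \ref{first-order-equivalent} and Corollary \ref{cor:balanced}. The only divergence is minor: for $\Ll$-hardness of $\CSP(\mathbb{C}_2)$ and $\CSP(\mathbb{C}_3)$ the paper invokes the Larose--Tesson meta-result (a CSP in $\Ll$ that is not first order definable is $\Ll$-complete under first order reductions) rather than your direct reduction from undirected reachability, and your explicit restriction argument for the one-sided cases in part (5) is, if anything, slightly more careful than the paper's remark following Example \ref{eg:cycletopbot}.
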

\begin{proof}
The proof is essentially an application of the main result of \cite{BKN} followed by a summary of various examples considered earlier in this article.

(1) By Proposition \ref{first-order-equivalent}, $\CSP(G)$ is first order equivalent to $\CSP(H)$ for some digraph with no sources and sinks that is obtained from $G$ by deleting total sources and total sinks.  Because $G$ is not $\mathbb{T}_k$, $\mathbb{C}_2^{[i,j]}$ or $\mathbb{C}_3^{[i,j]}$, it follows that $H$ is not one of $\mathbb{C}_1,\mathbb{C}_2,\mathbb{C}_3$, and moreover because $H$ is semicomplete, it is a core without sources and sinks that is not a disjoint union of cycles.  Hence by the main result of \cite{BKN}, $H$ has no weak NU polymorphism so that $\CSP(H)$ is $\NP$-complete.  Hence $\CSP(G)$ is \NP-complete and by Corollary \ref{cor:regular} has no weak NU polymorphism.

  (2) $\CSP(\mathbb{T}_k)$ is well known to be first order definable for all $k\geq 1$ (see \cite{LLT} for example),
  and therefore $\CSP(\mathbb{T}_k)$ is solvable in \texttt{L}. The polymorphism claims are given in Example \ref{eg:transtournament}.

(3--6)  The problems
  $\CSP(\mathbb{C}_2)$ and $\CSP(\mathbb{C}_3)$ are  \texttt{L}-complete.  Indeed, it is well known that these problems are not first order definable (this can be easily proved directly, or otherwise use the classification of first order definable CSPs in Larose, Loten and Tardiff \cite{LLT}); but also as both have majority and Maltsev polymorphisms they are solvable in \Ll\  (by Dalmau and Larose \cite{dallar} for example).  By Larose and Tesson \cite{lartes}, a CSP lying in $\Ll$ but not first order definable is $\Ll$-complete with respect to first order reductions.  Now by
  Lemma~\ref{first-order-equivalent}, $\CSP(\mathbb{C}_2^{[i,j]})$ and
  $\CSP(\mathbb{C}_3^{[i,j]})$ are also \texttt{L}-complete, for all $i\leq 0\leq j$.  The polymorphism claims are established in Example \ref{eg:cyclenperm} for congruence $n$-permutability, Example~\ref{eg:cycletopbot} for majority when $\max\{-i,j\}\leq 1$, Example~\ref{eg:cyclemod} for the failure of congruence modularity when $\max\{-i,j\}> 1$, and Example~\ref{eg:cycle2sl} for the claims about $2$-semilattice polymorphisms.
\end{proof}

\section{Examples separating nodes in Figure \ref{fig:maltsev}}
Kazda \cite{kaz} showed that a digraph with a Maltsev polymorphism also has a majority polymorphism.  This means that Figure \ref{fig:maltsev} undergoes a collapse when restricted to digraph CSPs: the node labelled ``Maltsev'' is identified by that labelled by ``A''.  On the other hand, Bulin, Delic, Jackson and Niven \cite{BDJN2} show that
whenever $\mathbb{A}$ is a finite relational structure with finitely many relational symbols, there is a digraph $\mathbb{D}_\mathbb{A}$ such that \emph{any} other combination of polymorphism properties in Figure \ref{fig:maltsev}, is held equivalently by $\mathbb{A}$ and $\mathbb{D}_\mathbb{A}$.  This does not in itself imply that each node in Figure \ref{fig:maltsev} is genuinely distinct.  

We now use some of the examples in the present article to show that each node of Figure \ref{fig:maltsev} really is distinct.  Using the result of \cite{BDJN2}, this shows that even amongst digraph CSPs each node is distinct, except for the aforementioned ``$\text{Maltsev}\equiv\text{A}$'' collapse.  Using structured unions and Theorem \ref{thm:sumclosure}, it suffices to find examples lying precisely at nodes that are join irreducible in Figure \ref{fig:maltsev}. (We note that while the direct product of relational structures corresponds to CSP intersection, the nodes in Figure \ref{fig:maltsev} correspond to classes of CSPs, and the intersection of two actual CSPs usually lies higher \emph{up} in Figure \ref{fig:maltsev}, not down.)  We list these nodes, and relational structures placing them precisely at these locations.
\begin{itemize}
\item $\SD(\wedge)$.  Recall that the graph of a binary operation $\cdot$ is the ternary relation $\{(x,y,z)\mid x\cdot y=z\}$.  The structure on $\{0,1\}$ with the ternary relation $\{(0,0,0),(0,1,0),(1,0,0),(1,1,1)\}$ corresponding to the graph of the meet semilattice relation on $\{0,1\}$, and the two singleton unary relations $\{0\}$ and $\{1\}$ lies at precisely this node.  Its idempotent polymorphisms are exactly the term functions of the two-element semilattice.
\item CD.  The two-element template for directed unreachability is well known to lie at exactly this node.  It has three relations; the usual order relation $\{(0,0),(0,1),(1,1)\}$ on $0,1$ and the two singleton unary relations.  Its polymorphisms are exactly the term functions of the two element lattice.
\item Maltsev (or C$3$P after translation to digraphs via \cite{BDJN}).  The ternary relation on $\{0,1\}$ corresponding to the graph of addition modulo $2$, namely $\{(0,0,0),(0,1,1),(1,0,1),(1,1,0)\}$  lies exactly at this node, provided that the singleton unary relation $\{1\}$ is included to make the structure a core.  Its polymorphisms are exactly the idempotent term functions of the two-element group.  In \cite{BDJN} an equivalent template is used to produce a 78-vertex digraph which will lie at the node ``C$3$P'' in Figure \ref{fig:maltsev}.
\item ``CD and C$n$P''.  The tournament $\mathbb{T}_n$ has this property.
\item ``$\SD(\vee)$ and C$n$P''.  For $n>3$ we may take the structured union of $\mathbb{T}_n$ with $\mathbb{C}_n^{[0,2]}$: in fact the disjoint union suffices because the digraphs are homomorphism independent.
\end{itemize}

\section{Conclusions and complexity}\label{sec:separation}
Work in the present article arose partly from consideration of how various complexity classes are represented within the broad universal algebraic ``geography'' shown in Figure \ref{fig:maltsev}.  Investigation of the possible computational complexity of tractable CSPs is invited by  Allender et al.\ \cite{ABISV}, who classify complexity at the level of $2$-element templates, and given a more general footing by Larose and Tesson \cite{lartes}, who tie hardness results for a similar array of computational complexity to the omitting-type classification (the upper level of Figure \ref{fig:maltsev}).  Investigations in this direction quickly lead to consideration of the issue of stability of polymorphism properties under first order reductions.  

We say that a property $P$ is \emph{preserved under first order reductions} if whenever $P$ holds on $\CSP(\mathbb{A})$ and there is a first order reduction from $\CSP(\mathbb{B})$ to $\CSP(\mathbb{A})$, then $P$ also holds on $\CSP(\mathbb{B})$.  In general polymorphism properties are not preserved under first order reductions: the class of CSPs with $\SD(\wedge)$ polymorphisms contains problems that are $\Poly$-complete with respect to first order reductions, yet there are tractable problems that do not have the $\SD(\wedge)$ property.  The same example counterintuitively shows that natural algorithmic properties need not be preserved under first order reductions: by Barto and Kozik~\cite{b-k2,barkoz}, the $\SD(\wedge)$ property corresponds to solvability by local consistency check.  Similarly, in the present article we have seen that the strict width property (every locally consistent solution extends to a full solution) and the few subpowers algorithm are also not preserved under first order reductions.   

 At the two element level, Allender et al.~\cite{ABISV} find that all CSPs  are either first order definable, or complete in one of the classes $\Ll$, $\NL$, $\Mod_2\Ll=\oplus\Ll$, $\Poly$ and $\NP$.  In \cite{lartes} we see that in general $\Mod_p\Ll$ for prime $p$ will replace the $p=2$ case.  A complete classification of complexity of list homomorphism problems (CSPs in which all unary relations are included in the signature) over undirected graphs is obtained in Egri, Krokhin, Larose and Tesson \cite{EKLT}, where the Mod classes do not appear, but again all problems turn out be first order definable or complete in $\Ll$, $\NL$, $\Poly$ or $\NP$.  Amongst digraphs with no sources and sinks, the CSPs are either solvable in $\Ll$ or $\NP$-complete \cite{BKN}.
 In general though it seems unlikely that there is any really simple classification of complexity amongst tractable CSPs.  Even within those CSPs located at the C$n$P node, for fixed $n\geq 2$, there are $\Mod_p\Ll$-complete problems for every prime $p$, and the observations of Section \ref{sec:reddig} (such as Lemmas \ref{lem:CSPplus} and \ref{lem:modp}) show that they may be combined with structured union (or possibly direct products) to produce problems outside of $\Mod_p\Ll$ for prime $p$ and possibly even outside of $\Mod_k\Ll$ for any $k$.  Yet the polymorphism properties remain unchanged (by Theorem \ref{thm:sumclosure}). 
Nodes such as CM are even worse, because here there are problems that are unions of $\Mod_p\Ll$-complete problems (or $\cap/\cup$ combinations of $\Mod_k\Ll$ problems for varying $k$), with problems that are $\NL$-complete, or that are intersections of such languages.  However it is still possible that CSPs with the C$n$P property or the Hobby-McKenzie property can be bound within some proper subclass of \Poly.  For example, it is not out of the question that C$n$P problems lie within the complexity classes obtained by combinations of $\cap$ and $\cup$ applications to $\Mod_k\Ll$ problems for varying $k$, while 
Hobby-McKenzie problems might still also possibly lie within 
some intermediate subclass of \texttt{P}, such as \texttt{NC} for example. 
(Such speculations depend heavily on unresolved complexity theoretic issues such as the absence of a containment between the classes $\NL$ and $\Mod_k\Ll$ and $\texttt{NC}\neq \Poly$.)
 
A  more fruitful line of attack may be to attempt to show that there are no \Poly-complete problems with the C$n$P property, or with the Hobby-McKenzie property.  Again, one is pushed toward the issue of preservation of polymorphisms under first order reductions.  For example, if the C$n$P property is preserved under first order reductions then no $\Poly$-complete CSP can have the C$n$P property, and similarly for Hobby-McKenzie.

We complete the article by observing that a number of popular conjectures in this area can also be phrased in terms of the combination of a dichotomy-like conjecture and a statement about preservation of polymorphism properties under first order reductions.

\subsection{Algebraic dichotomy}  The algebraic dichotomy conjecture is equivalent to the Feder-Vardi dichotomy conjecture along with the claim that the Taylor property is preserved under first order reductions.  Indeed if the Feder-Vardi dichotomy is true (with completeness in terms of first order reductions) but the algebraic dichotomy false, then there is $\mathbb{A}$ with the Taylor property but with $\CSP(\mathbb{A})\notin\Poly$.  So $\CSP(\mathbb{A})$ is $\NP$-complete.  Consider some template $\mathbb{B}$ without the Taylor property.  By completeness, we have a first order reduction from $\CSP(\mathbb{B})$ to $\CSP(\mathbb{A})$, showing that the Taylor property is not preserved under first order reductions.

\subsection{The $\NL=\SD(\vee)$ conjecture.}
 A second widely stated conjecture is that a CSP is solvable in nondeterministic logspace if and only if it has polymorphisms witnessing the $\SD(\vee)$ property.  We mention that this $\NL=\SD(\vee)$ conjecture is implicitly premised on the assumptions that $\Mod_k\Ll\not\subseteq \NL$ and $\NL\subsetneq \Poly$; otherwise there are obvious counterexamples.
 
 If the $\NL=\SD(\vee)$ conjecture is true, then there is a dichotomy within problems of bounded width: a CSP with bounded width is either $\Poly$-complete (under first order reductions), or in $\NL$.  Indeed, if $\CSP(\mathbb{A})$ is not solvable in \NL, then if it has bounded width  it is $\SD(\wedge)$ but not $\SD(\vee)$ and so admits the semilattice type.  In this case $\CSP(\mathbb{A})$ is \Poly-complete, by the result of Larose and Tesson \cite{lartes}.
 
The $\NL=\SD(\vee)$ conjecture can now be seen as equivalent to the conjunction of the following two statements:  
 \begin{itemize}
 \item a CSP that is bounded width is either \Poly-complete (under first order reductions) or in \NL.
 \item the $\SD(\vee)$ property is preserved under first order reductions.
 \end{itemize}
 One direction of the equivalence is given above.  Now assume that the $\NL=\SD(\vee)$ conjecture fails.  So there is an $\mathbb{A}$ with the $\SD(\vee)$ property but that is not solvable in $\NL$.  As $\mathbb{A}$ does have bounded width, then either the first item fails or $\CSP(\mathbb{A})$ is $\Poly$-complete.  But then the second item fails because there are templates $\mathbb{A}$ without the $\SD(\vee)$ property but with $\CSP(\mathbb{B})$ tractable (and hence reducing to $\CSP(\mathbb{A})$).

 \subsection{The $\Ll$ conjecture.}
A third conjecture is that CSPs solvable in $\Ll$ are precisely those with the $\SD(\vee)$ and C$n$P property (for some $n$).  This conjecture is also related to statements about the preservation of polymorphism-definable properties under first order reductions.  If we assume that the $\NL=\SD(\vee)$ conjecture holds, then the conjecture on $\Ll$ gives a trichotomy for problems of bounded width: they are either $\Poly$-complete, $\NL$-complete or in $\Ll$.  Under the assumption of $\NL=\SD(\vee)$ (and the extra complexity theoretic assumptions $\Ll\subsetneq \NL$ and $\Ll\subsetneq \Mod_p\Ll$ for all $p> 1$), the $\Ll$ conjecture is equivalent to the conjunction of the following two statements.
 \begin{itemize}
 \item A CSP that is solvable in $\NL$ is either $\NL$-complete or solvable in $\Ll$.
 \item Simultaneous satisfaction of the $\SD(\vee)$ property and the C$n$P property is preserved under first order reductions (possibly for varying $n$).
 \end{itemize}

\end{document}